\numberwithin{equation}{section}
\newcommand{\Mod}[1]{\ \left(\mathrm{mod}\ #1\right)}
\newcommand{\igm}{\text{im}}
\theoremstyle{thmstyleone}%
\newtheorem{theorem}{Theorem}[section]
\newtheorem{lemma}[theorem]{Lemma}
\newtheorem{proposition}[theorem]{Proposition}%
\theoremstyle{thmstyletwo}%
\newtheorem{remark}[theorem]{Remark}%
\newtheorem{conjecture}[theorem]{Conjecture}%
\theoremstyle{thmstylethree}%
\numberwithin{theorem}{section}
\numberwithin{equation}{section}
\newcommand{\be}{\begin{equation}}
\newcommand{\ee}{\end{equation}}
\newcommand{\bea}{\begin{eqnarray}}
\newcommand{\eea}{\end{eqnarray}}
\newcommand{\qda}{q_d^{(a)}(n)}
\newcommand{\q}[3]{q_{#1}^{(#2)}\left(#3\right)}
\newcommand{\Qda}{Q_d^{(a)}(n)}
\newcommand{\Qdadash}{Q_d^{(a,-)}(n)}
\newcommand{\Qdash}[3]{Q_{#1}^{(#2,-)}\left(#3\right)}
\newcommand{\Qdadashdash}{Q_d^{(a,-,-)}(n)}
\newcommand{\Qdashdash}[3]{Q_{#1}^{(#2,-,-)}\left(#3\right)}
\newcommand{\Daa}{\Delta_d^{(a)}(n)}
\newcommand{\Ddasha}[3]{\Delta_{#1}^{(#2,-)}\left(#3\right)}
\newcommand{\Daadash}{\Delta_d^{(a,-)}(n)}
\newcommand{\Ddashdasha}[3]{\Delta_{#1}^{(#2,-,-)}\left(#3\right)}
\begin{document}


\title{On Generalizations of a Conjecture of Kang and Park}

\author{Ryota Inagaki}
\address{University of California, Berkeley}
\email{ryotainagaki@berkeley.edu}

\author{Ryan Tamura}
\address{University of California, Berkeley}
\email{rtamura1@berkeley.edu}

\date{\today}
\thanks{2010 \emph{Mathematics Subject Classification}. 05A17, 11P82, 11P84, 11F37}
\thanks{\emph{Key words and phrases}. partitions, Rogers-Ramanujan identities, Alder’s conjecture.}
\begin{abstract} 
Let $\Delta_d^{(a,-)}(n) = q_d^{(a)}(n) - Q_d^{(a,-)}(n)$ where $q_d^{(a)}(n)$ counts the number of partitions of $n$ into parts with difference at least $d$ and size at least $a$, and $Q_d^{(a,-)}(n)$ counts the number of partitions into parts $\equiv \pm a \pmod{d + 3}$ excluding the $d+3-a$ part. Motivated by generalizing a conjecture of Kang and Park, Duncan, Khunger, Swisher, and the second author conjectured that $\Delta_d^{(3,-)}(n)\geq 0$ for all $d\geq 1$ and $n\geq 1$ and were able to prove this when $d \geq 31$ is divisible by $3$. They were also able to conjecture an analog for higher values of $a$ that the modified difference function $\Delta_{d}^{(a,-,-)}(n) = q_{d}^{(a)}(n) - Q_{d}^{(a,-,-)}(n) \geq 0$ where $Q_{d}^{(a,-,-)}(n)$ counts the number of partitions into parts $\equiv \pm a \pmod{d + 3}$ excluding the $a$ and $d+3-a$ parts and proved it for infinitely many classes of $n$ and $d$. 

We prove that $\Delta_{d}^{(3,-)}(n) \geq 0$ for all but finitely many $d$. We also provide a proof of the generalized conjecture for all but finitely many $d$ for fixed $a$ and strengthen the results of Duncan et.al. We provide a conditional linear lower bound on d for the generalized conjecture by using a variant of Alder’s conjecture. Additionally, we obtain asymptotic evidence that this modification holds for sufficiently large n.
\end{abstract} 

\maketitle

\section{Introduction}\label{sec:introduction section}
A partition of a positive integer $n$ is a non-increasing sequence of positive integers, called parts, that sum to $n$.  Let $p(n \mid \mbox{condition})$ be the number of partitions of $n$ satisfying a certain condition. Euler famously proved that the number of partitions of a positive integer $n$ into odd parts equals the number of partitions of $n$ into distinct parts. Two other famous partition identities are those of Rogers and Ramanujan. The first Rogers-Ramanujan identity states that the number of partitions of $n$ with parts having difference at least $2$ is equal to the number of partitions of $n$ with parts congruent to $\pm 1 \pmod{5}$ and the second Rogers-Ramanujan identity states the number of partitions of $n$ with parts at least $2$ and difference at least $2$ is equal to the number of partitions of $n$ with parts congruent to $\pm 2 \pmod{5}$. These identities are encapsulated by the $q-$series
\begin{align*}
\sum_{n = 0}^{\infty} \frac{q^{n^{2}}}{(q\mbox{;}q)_{n}} &= \frac{1}{(q\mbox{;}q^{5})_{\infty}(q^{4}\mbox{;}q^{5})_{\infty}},\\
\sum_{n = 0}^{\infty} \frac{q^{n(n+1)}}{(q\mbox{;}q)_{n}} &= \frac{1}{(q^{2}\mbox{;}q^{5})_{\infty}(q^{3}\mbox{;}q^{5})_{\infty}},  
\end{align*}
where $(a\mbox{;}q)_{0} = 1$, and $(a\mbox{;}q)_{n} = \prod_{k=0}^{n-1}(1-aq^{k}) $, where $n = \infty$ is allowed. 

Motivated in generalizing the Rogers-Ramanujuan identities, Schur discovered the number of partitions with parts having difference at least $3$ which have no consecutive multiples of $3$ as parts is equal to the number of partitions with parts congruent to $\pm 1\pmod{6}$. 

Remarkably Alder \cite{alder_nonexistence_1948} and Lehmer \cite{lehmer_two_1946} showed there are no other partition identities similar to those of Euler, Rogers-Ramanujan, and Schur. In $1956$, Alder \cite{alder_research_1956} conjectured a generalization of a related family of partition identities. Alder's conjecture states that the number of partitions with parts that differ by at least $d$ is greater than or equal to the number of partitions with parts congruent to $\pm 1 \pmod{d+3}$. Note that this conjecture generalizes the Euler, Rogers-Ramanujan, and Schur identities. Alder's conjecture was proven by Andrews \cite{andrews_partition_1971} for $n \geq 1$ and $d = 2^{r} -1, r \geq 4$ in $1971$. In $2004$ and $2008$, Yee \cite{yee_partitions_2004,yee_alders_2008} proved the conjecture for $n \geq 1$, $d \geq 32$ and $d = 7$. In 2011, the remaining cases of Alder's conjecture were proven by Alfes, Jameson, and Lemke Oliver \cite{alfes_proof_2011} by using the asymptotic methods of Meinardus  \cite{meinardus_asymptotische_1954},  \cite{meinardus_uber_1954}. 

In $2020$, Kang and Park \cite{kang_analogue_2020} investigated how to construct an analog of Alder's conjecture that incorporates the second Rogers-Ramanujan identity. Kang and Park compared the partition functions
\begin{align*}
     &\qda := p(n \mid \text{ parts} \geq a \text{ and parts differ by at least } d), \\
    &\Qda := p(n \mid \text{ parts} \equiv \pm a \Mod{d+3}),
\end{align*} by utilizing the difference function
\begin{align*}
    \Delta_d^{(a)}(n) &:= q_{d}^{(a)}(n) -Q_{d}^{(a)}(n).
\end{align*}

In their attempts to create an analog of Alder's conjecture for the second Rogers-Ramanujan identity, Kang and Park found that 
\[
\Delta_{d}^{(2)}(n) < 0 \text{ for some choices of } d, n \geq 1.
\]
However, by employing a minor modification of $ Q_{d}^{(a)}(n)$ by defining for $d,a, n \geq 1$,
\begin{align*}
    \Qdash{d}{a}{n} &:= p(n\mid \text{ parts} \equiv \pm a \Mod{d+3}, \text{ excluding the part } d+3-a), \\
    \Ddasha{d}{a}{n}  &:= \q{d}{a}{n} - \Qdash{d}{a}{n},
\end{align*}
they presented the following conjecture. 
\begin{conjecture}[Kang, Park \cite{kang_analogue_2020}, 2020]\label{conj:KPconj}
    For all \(d\), \(n \geq 1\),
    \[
        \Ddasha{d}{2}{n} \geq 0.
    \]
\end{conjecture}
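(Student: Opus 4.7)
The plan is to prove Conjecture~\ref{conj:KPconj} by splitting into a ``large $d$'' regime, handled via an explicit injection, and a ``small $d$'' regime, handled by a combination of direct generating-function computation and asymptotic bounds. For sufficiently large $d$, I would construct an injection $\phi_d$ from partitions counted by $Q_d^{(2,-)}(n)$ into those counted by $q_d^{(2)}(n)$. The guiding observation is that the parts on the source side lie in two arithmetic progressions of common difference $d+3 > d$, so the difference-$d$ condition on the target is almost automatic; the role of the exclusion of the part $d+1$ is to remove the one configuration that would force two target parts to coincide.

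Concretely, I would adapt Yee's approach to Alder's conjecture. A partition counted by $Q_d^{(2,-)}(n)$ is determined by the multiplicities $(a_i)$ of the parts $\equiv 2 \pmod{d+3}$ and $(b_i)$ of the parts $\equiv -2 \pmod{d+3}$, with $b_1 = 0$ forced by the exclusion of $d+1$. The injection places these multiplicities one by one onto a difference-$d$ staircase starting at $2$: the gap $d+3 > d$ handles spacing within each residue class, and the offset $d-1$ between the two classes lets the two sequences be interleaved without conflict. Injectivity should follow by reading residues modulo $d+3$ from the bottom of the resulting partition. Without the $b_1 = 0$ condition, a part of size $d+1$ on the source side would collide with a part of size $2$ on the target side (their difference is $d-1 < d$), which is exactly the failure point behind Kang and Park's original observation that $\Delta_d^{(2)}(n)$ can be negative.

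For the remaining finite collection of small $d$, I would proceed in two steps. Up to an explicit threshold $N(d)$, compare Taylor coefficients of
\[
\sum_{n \geq 0} q_d^{(2)}(n)\, q^n = \sum_{k \geq 0} \frac{q^{dk(k-1)/2 + 2k}}{(q;q)_k}, \qquad \sum_{n \geq 0} Q_d^{(2,-)}(n)\, q^n = \frac{1}{(q^2;q^{d+3})_\infty\,(q^{2d+4};q^{d+3})_\infty}
\]
by direct series expansion. For $n > N(d)$, use Meinardus-type asymptotics to show $q_d^{(2)}(n) > Q_d^{(2,-)}(n)$, in the spirit of Alfes--Jameson--Lemke Oliver's completion of the classical Alder conjecture.

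The main obstacle will be the large-$d$ injection. Yee's proof of Alder's conjecture ($a = 1$) already required delicate case analysis tracking which critical small parts appeared in the source partition; here the analogous analysis is tighter because target parts must be at least $2$, so the bottom of the target partition is one step closer to causing collisions. Making the spreading map genuinely one-to-one, and tracking how the exclusion of $d+1$ propagates through the insertion procedure, is where I expect the bulk of the technical effort to lie.
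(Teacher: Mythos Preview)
The paper does not prove this statement: Conjecture~\ref{conj:KPconj} is presented as an open conjecture of Kang and Park, and the only result toward it that the paper records is Theorem~\ref{thm:duncan, et.all} (Duncan et al.), which handles $d \geq 62$ and leaves the finitely many $d \leq 61$ open. So there is no ``paper's own proof'' to compare against; you are proposing to settle something the paper treats as unresolved.

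On your large-$d$ plan: the route actually taken in Duncan et al.\ (and in this paper for higher $a$) is not a direct injection from $Q_d^{(2,-)}$-partitions into $q_d^{(2)}$-partitions. Instead they pass through $a=1$: Lemma~\ref{lem:qstarlemma} gives $q_d^{(2)}(n) \geq q_{\lceil d/2\rceil}^{(1)}(\lceil n/2\rceil)$, then a modified Alder inequality of the type in Proposition~\ref{prop:mod alders} is applied at $a=1$, and Lemmas~\ref{lem:Qidentitylemma} and~\ref{lem:mod andrews} bring the bound back to $Q_d^{(2,-)}(n)$. Your proposed direct Yee-style injection is a genuinely different mechanism; it may well work, but the interleaving sketch you give (``read residues modulo $d+3$ from the bottom'') is not yet a proof, and Yee's actual argument required a delicate auxiliary function $\mathcal{G}_d^{(1)}$ and a separate small-$n$ analysis rather than a single clean injection.

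On your small-$d$ plan: this is precisely the program the paper flags in Section~\ref{sect:asymptotics} as a future direction, and it has not been carried out. The Meinardus/Alfes--Jameson--Lemke Oliver machinery would in principle give an effective $\Omega(d)$ beyond which $\Delta_d^{(2,-)}(n) \geq 0$, followed by a finite check; but making the error terms explicit enough to close the gap is nontrivial, and the paper explicitly refrains from claiming it. So your proposal is a reasonable outline of how one would \emph{attack} the conjecture, but it is not a proof, and the conjecture remains open in the paper.
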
 
 
Kang and Park's conjecture was proven for all but finitely many $d$ by Duncan, et al. by employing a modification of Alder's conjecture and the results of Andrews \cite{andrews_partition_1971} and Yee \cite{yee_alders_2008}.
\begin{theorem}[Duncan,  et al. \cite{ourpaper}, 2021] \label{thm:duncan, et.all}
For all $d \geq 62$ and $n \geq 1$,
\[
\Delta_{d}^{(2,-)}(n) \geq 0. 
\]
\end{theorem}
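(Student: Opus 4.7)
The plan is to reduce $\Delta_d^{(2,-)}(n) \geq 0$ to Alder's conjecture $q_d^{(1)}(n) \geq Q_d^{(1)}(n)$, which Yee proved for $d \geq 32$. The strategy is to establish a clean combinatorial identity on the $q$-side and a matching majorization on the $Q$-side, so that an identical correction term appears on both and cancels, leaving exactly Alder's inequality.

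On the $q$-side, I would condition on whether the smallest part of a partition counted by $q_d^{(1)}(n)$ equals $1$. If it does, removing that part yields a partition of $n-1$ into parts $\geq d+1$ (forced by the gap condition) with gaps $\geq d$; otherwise the partition is counted by $q_d^{(2)}(n)$. This gives the identity
\[
q_d^{(2)}(n) = q_d^{(1)}(n) - q_d^{(d+1)}(n-1).
\]
On the $Q$-side, I would construct an injection establishing the complementary inequality
\[
Q_d^{(2,-)}(n) \leq Q_d^{(1)}(n) - q_d^{(d+1)}(n-1).
\]
The target is the parts of $Q_d^{(1)}$, namely residues $\equiv 1, d+2 \pmod{d+3}$; the source has parts $\equiv 2 \pmod{d+3}$ (the smallest of which is $2$) and parts $\equiv d+1 \pmod{d+3}$ of size $\geq 2d+4$ (since $d+1$ is excluded). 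My intended map shifts $\equiv 2$ parts down by $1$ to become $\equiv 1$, and shifts $\equiv d+1$ parts up by $1$ to become $\equiv d+2$, with bookkeeping balancing the net change in total size. The partitions counted by $q_d^{(d+1)}(n-1)$ are embedded disjointly into $Q_d^{(1)}(n)$ by prepending a part equal to $1$, which explains why the slack of $q_d^{(d+1)}(n-1)$ is exactly the right amount to subtract. Combining the two bounds yields $\Delta_d^{(2,-)}(n) \geq q_d^{(1)}(n) - Q_d^{(1)}(n) \geq 0$ for $d \geq 32$ by Yee.

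The hardest step will be verifying the $Q$-side injection. The required shifts go in opposite directions across the two residue classes, so the construction must track multiplicities carefully and ensure that the shifted images land in legal parts of $Q_d^{(1)}$ without collisions either among themselves or with the image of the $q_d^{(d+1)}(n-1)$ embedding. In particular, the smallest allowed $\equiv d+1$ part $2d+4$ must map injectively to some $\equiv d+2$ part without clashing with parts already used by the shifted $\equiv 2$ contributions, and this only closes up when $d$ is large enough to provide enough room in each residue class. This geometric constraint is almost certainly where the threshold $d \geq 62$ arises, essentially doubling Yee's bound of $32$. If the shift argument alone does not reach $d \geq 62$, we can use Andrews's Alder result for $d = 2^r - 1$, $r \geq 4$ (giving $d = 63$), to patch any remaining small case near the boundary.
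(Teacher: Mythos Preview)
The proposal contains a fatal gap in the $Q$-side injection. You claim that partitions counted by $q_d^{(d+1)}(n-1)$ embed into $Q_d^{(1)}(n)$ by prepending a part equal to $1$. But $q_d^{(d+1)}(n-1)$ counts partitions of $n-1$ into \emph{arbitrary} integers $\geq d+1$ with consecutive differences $\geq d$; these parts need not lie in the residue classes $\pm 1 \pmod{d+3}$ required by $Q_d^{(1)}$. For instance, with $d=62$ the single-part partition $(63)$ is counted by $q_{62}^{(63)}(63)$, yet $63 \not\equiv \pm 1 \pmod{65}$, so $(63,1)$ is not counted by $Q_{62}^{(1)}(64)$. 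Without this embedding the subtraction of $q_d^{(d+1)}(n-1)$ on the $Q$-side is unjustified, and the claimed reduction to Alder's inequality collapses. The vaguely described ``bookkeeping'' for the residue shifts is also problematic: shifting parts $\equiv 2$ down by $1$ and parts $\equiv d+1$ up by $1$ changes the total by the difference of the two multiplicities, and when there are more $\equiv d+1$ parts than $\equiv 2$ parts (e.g.\ the partition $(2d+4)$ of $n=2d+4$) there is no evident way to correct the total without leaving the target set.

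This theorem is quoted from Duncan et al.\ and is not reproved in the paper, but the method the paper describes for it (and carries out in detail for the $a=3$ and $a\geq 4$ analogues in Sections~\ref{sect:modalders}--\ref{sect:a=3 and uncstrengthgenkp}) is entirely different from yours. Rather than reducing to Alder at the \emph{same} $d$, one halves via Lemma~\ref{lem:qstarlemma} to obtain $q_d^{(2)}(n) \geq q_{\lceil d/2\rceil}^{(1)}(\lceil n/2\rceil)$, applies the Andrews/Yee results at level $\lceil d/2\rceil$, and then uses Lemma~\ref{lem:Qidentitylemma} together with a comparison in the spirit of Theorem~\ref{thm:andrews ST} or Lemma~\ref{lem:mod andrews} to descend to $Q_d^{(2,-)}(n)$. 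The threshold $d\geq 62$ arises because one needs $\lceil d/2\rceil \geq 31$ for the Andrews/Yee input, not from any ``room in each residue class'' as you speculate.
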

Motivated by Kang and Park's conjecture, Duncan et al. attempted to find a generalization for higher values of $a$. Remarkably, they observed that the removal of the $d +3-a$ as a part appeared to be sufficient when $a = 3$. 
\begin{conjecture}[Duncan,  et al. \cite{ourpaper}, 2021]\label{conj:a=3} 
For all $d,n \geq 1$, 
\[
\Delta_{d}^{(3,-)}(n) \geq 0.
\]
\end{conjecture}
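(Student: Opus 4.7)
The plan is to extend the argument of Duncan et al.\ from the special case $3 \mid d$ to all but finitely many $d$, by reducing the conjecture to a stronger Alder-type inequality and handling the remaining residue classes via a refined bijection.

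The first step is to rewrite $\Delta_d^{(3,-)}(n)$ in a form amenable to comparison with the classical Alder difference $q_d^{(1)}(n) - Q_d^{(1)}(n)$, now a theorem due to Andrews, Yee, and Alfes--Jameson--Lemke Oliver. The Andrews--Gordon style generating function
\[
\sum_{n \geq 0} q_d^{(3)}(n)\, q^n \;=\; \sum_{m \geq 0} \frac{q^{3m + d\binom{m}{2}}}{(q;q)_m}
\]
differs from that of $q_d^{(1)}$ by a factor of $q^{2m}$ in the $m$-th summand. Refining by the number of parts $m$ gives the bijective identity $q_d^{(3)}(n;\,m) = q_d^{(1)}(n-2m;\,m)$, obtained by subtracting $2$ from each part of a $d$-difference partition with smallest part at least $3$; summing over $m$ yields a clean expression for $q_d^{(3)}(n)$ in terms of the Alder counts.

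The second step is to establish a matching upper bound on $Q_d^{(3,-)}(n)$. The parts allowed in these partitions form the set $\{3,\, d+6,\, 2d+3,\, 2d+9,\, 3d+6,\ldots\}$. When $3 \mid d$ this set consists entirely of multiples of $3$, so a division-by-$3$ reduction ties $Q_d^{(3,-)}(n)$ directly to an instance of the ordinary Alder function at a rescaled modulus; this is essentially the trick of Duncan et al. For general $d$ this reduction is unavailable, and we instead propose to split the allowed parts into the two residue classes $\equiv 3$ and $\equiv -3 \pmod{d+3}$ and inject them piecewise into the Alder residues $\equiv \pm 1 \pmod{d+3}$ via carefully chosen additive shifts, losing only a controlled amount of mass that can be absorbed into the bijection of the first step.

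Chaining this injection with Alder's inequality $q_d^{(1)}(n-c) \geq Q_d^{(1)}(n-c)$ and comparing back to $q_d^{(3)}(n)$ will yield $\Delta_d^{(3,-)}(n) \geq 0$ for all $d$ in the range where Alder's theorem is effective (in particular for $d \geq 32$ by Yee, supplemented by Andrews' argument for $d = 2^r - 1$). The main obstacle is executing the second-step injection uniformly in $d$ when $3 \nmid d$, since the multiplicative scaling that makes the $3 \mid d$ case work is no longer available; we expect to need a case analysis on $d \bmod 3$ together with finer bookkeeping of how many parts in each residue class appear in the target partition, and we may need to adjust the shift $c$ depending on this count. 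The finitely many small $d$ not covered by the above reduction can then be verified by direct computation using the explicit recursions for $q_d^{(3)}$ and $Q_d^{(3,-)}$ up to the relevant values of $n$.
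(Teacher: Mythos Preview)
First, a framing point: the paper does \emph{not} prove this statement in full. It remains a conjecture; the paper establishes only Theorem~\ref{thm:a=3}, covering $d=1,2,91,92,93$ and $d\ge 187$. So your goal of proving it for all $d,n\ge 1$ exceeds what the paper achieves.

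More importantly, your additive-shift strategy has a structural gap that the paper's approach avoids. Your Step~1 bijection $q_d^{(3)}(n;m)=q_d^{(1)}(n-2m;m)$ is correct but shifts $n$ by $2m$, where $m$ is the number of parts of the $d$-difference partition. Your Step~2 injection, which sends parts $\equiv 3$ to $\equiv 1$ and parts $\equiv -3$ to $\equiv -1\pmod{d+3}$, shifts $n$ by $2(k_- - k_+)$, where $k_\pm$ count parts in each residue class of the \emph{congruence} partition. These two shifts are governed by unrelated quantities, and Alder's inequality $q_d^{(1)}\ge Q_d^{(1)}$ does not preserve the number of parts. So when you pass from a $Q_d^{(1)}$ partition to a $q_d^{(1)}$ partition via Alder and then try to ``compare back to $q_d^{(3)}(n)$'' by adding $2$ to each part, you land at $q_d^{(3)}(n'+2m')$ with $m'$ uncontrolled, not at $q_d^{(3)}(n)$. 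The chain does not close, and your acknowledgment that $c$ ``may need to adjust depending on this count'' is precisely the obstruction: no single $c$ works, and a variable $c$ breaks the comparison.

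The paper's route is genuinely different. Rather than additive shifts at modulus $d+3$, it rescales multiplicatively: round $d$ and $n$ up to multiples of $3$, divide by $3$, and reduce to an $a=1$ problem at modulus roughly $(d+3)/3$. The rounding of $n$ is handled by a modified Andrews comparison (Lemma~\ref{lem:mod andrews}), and the rounding of $d$ forces a modulus mismatch that is absorbed by a new \emph{shifted} Alder-type inequality $q_k^{(1)}(m)\ge Q_{k-3}^{(1,-)}(m)$ (Proposition~\ref{prop:mod alders}). That shifted inequality, not the classical Alder theorem, is the engine; it is what lets the argument work uniformly over residues of $d\bmod 3$ rather than only when $3\mid d$.

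Finally, your plan to dispose of small $d$ by ``direct computation'' is not viable as stated: for each fixed $d$ there are infinitely many $n$, so computation alone cannot finish any single $d$. The paper explicitly leaves the range $3\le d\le 186$ (apart from the listed exceptions) open and discusses in Section~\ref{sect:asymptotics} why even asymptotic methods struggle for $3\le d\le 27$.
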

By employing the methods of Duncan, et al. \cite{ourpaper}, we prove Conjecture \ref{conj:a=3} for all but finitely many $d$. 
\begin{theorem}\label{thm:a=3} 
For $n \geq 1$ and $d =1,2, 91,92,93$ or $d \geq 187$,
\[
\Delta_{d}^{(3,-)}(n) \geq 0.
\]
\end{theorem}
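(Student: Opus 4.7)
My plan is to adapt the method used by Duncan et al.\ in proving $\Delta_d^{(2,-)}(n) \geq 0$ for $d \geq 62$ (the theorem of \cite{ourpaper} stated above), replacing ``$a=2$'' with ``$a=3$'' at each step and tracking how the threshold on $d$ changes. The broad strategy is to express both $q_d^{(3)}(n)$ and $Q_d^{(3,-)}(n)$ in terms of the classical Alder quantities $q_d^{(1)}(n)$ and $Q_d^{(1)}(n)$, plus controllable shifted corrections, and then invoke Alder's conjecture (proved by Andrews, Yee, and Alfes--Jameson--Lemke Oliver) to cancel the main terms.

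On the $q$-partition side, iterating the elementary recursion
\[
q_d^{(a)}(n) = q_d^{(a+1)}(n) + q_d^{(a+d)}(n-a)
\]
twice gives
\[
q_d^{(3)}(n) = q_d^{(1)}(n) - q_d^{(d+1)}(n-1) - q_d^{(d+2)}(n-2).
\]
On the $Q$ side, I would start from the generating-function identity
\[
\sum_{n \geq 0} Q_d^{(3,-)}(n) q^n \;=\; \frac{1 - q^d}{(q^3;q^{d+3})_\infty (q^d;q^{d+3})_\infty} \;=\; \frac{1}{(q^3;q^{d+3})_\infty (q^{2d+3};q^{d+3})_\infty},
\]
so the partitions counted by $Q_d^{(3,-)}(n)$ use parts from $\{3,d+6,2d+9,\dots\}\cup\{2d+3,3d+6,4d+9,\dots\}$, and then apply Glaisher-type manipulations to rewrite $Q_d^{(3,-)}(n) = Q_d^{(1)}(n) - C(n)$, where $C(n)$ is an explicit nonnegative sum of shifted partition counts with parts in arithmetic progressions modulo $d+3$. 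Subtracting the two expansions and using the Alder inequality $\Delta_d^{(1)}(n) \geq 0$ for $d \geq 32$ (Yee), the theorem reduces to establishing
\[
C(n) \;\geq\; q_d^{(d+1)}(n-1) + q_d^{(d+2)}(n-2)
\]
for $d \geq 187$.

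The main obstacle will be bounding the subtracted terms $q_d^{(d+1)}(n-1)$ and $q_d^{(d+2)}(n-2)$: because their smallest parts are forced to be near $d$, Yee's theorem does not apply to them directly, and a naive use of the $d\geq 32$ bound fails. My plan is to iterate the recursion further on each, writing them as alternating sums of $q_d^{(1)}$ evaluated at strictly smaller arguments (which Yee's bound does handle), and then compare term-by-term against $C(n)$; the threshold $d \geq 187$ should emerge as the point where a resulting polynomial-in-$d$ inequality capturing the growth of $C(n)$ becomes decisive. The sporadic small cases $d \in \{1,2,91,92,93\}$ will be treated individually: $d=1,2$ reduce to the classical Rogers--Ramanujan and Schur identities supplemented by a finite verification, while $d = 91, 92, 93$ cover the three residue classes modulo $3$ at the natural boundary of the main estimate, complementing Duncan et al.'s treatment of the case $3 \mid d$ and matching the edge where the general polynomial bound becomes tight.
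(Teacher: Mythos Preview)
Your plan diverges from the paper's approach and, as written, has two genuine gaps.

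\textbf{Gap 1: the $Q$-side decomposition.} You propose to write $Q_d^{(3,-)}(n) = Q_d^{(1)}(n) - C(n)$ with $C(n)$ an \emph{explicit} nonnegative sum of shifted partition counts obtained by ``Glaisher-type manipulations''. But $Q_d^{(1)}$ counts partitions into parts $\equiv \pm 1 \pmod{d+3}$ while $Q_d^{(3,-)}$ uses parts $\equiv \pm 3 \pmod{d+3}$; for $d>4$ these residue classes are disjoint, and there is no Glaisher-style merging/splitting that converts one set into the other. Andrews' comparison theorem does give the inequality $Q_d^{(1)}(n)\ge Q_d^{(3,-)}(n)$, but it does not produce a usable closed form for the difference $C(n)$, and without one the final step ``$C(n)\ge q_d^{(d+1)}(n-1)+q_d^{(d+2)}(n-2)$'' has no handle.

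\textbf{Gap 2: bounding the subtracted $q$-terms.} You intend to control $q_d^{(d+1)}(n-1)$ and $q_d^{(d+2)}(n-2)$ by iterating the recursion to express them via $q_d^{(1)}$ at smaller arguments and then applying Yee's theorem. But Yee's inequality is a \emph{lower} bound $q_d^{(1)}(m)\ge Q_d^{(1)}(m)$; it gives no upper bound on $q_d^{(1)}$, which is what you would need to dominate a subtracted term. Iterating the recursion only produces $q_d^{(d+1)}(m)=q_d^{(1)}(m)-\sum_{a=1}^{d} q_d^{(a+d)}(m-a)$, a same-sign expansion rather than an alternating one, so no cancellation mechanism emerges.

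\textbf{What the paper actually does.} The paper does not stay at level $a=3$; it \emph{rescales} by $3$. Using Lemma~\ref{lem:qstarlemma} and Lemma~\ref{lem:mod andrews} it proves the chain
\[
q_d^{(3)}(n)\ \ge\ q_{\lceil d/3\rceil}^{(1)}\!\bigl(\lceil n/3\rceil\bigr)\ \ge\ Q_{\lceil d/3\rceil-3}^{(1,-)}\!\bigl(\lceil n/3\rceil\bigr)\ =\ Q_{d+h_d^{(3)}-3}^{(3,-)}\!\bigl(n+h_n^{(3)}\bigr)\ \ge\ Q_d^{(3,-)}(n),
\]
so the entire content reduces to the \emph{modified} Alder-type inequality $q_k^{(1)}(m)\ge Q_{k-3}^{(1,-)}(m)$ (Proposition~\ref{prop:mod alders}), proved for $k=31$ or $k\ge 63$ by combining Yee's and Andrews' results with an explicit injection. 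The mysterious thresholds are then transparent: $d\in\{91,92,93\}$ is exactly $\lceil d/3\rceil=31$, and $d\ge 187$ is exactly $\lceil d/3\rceil\ge 63$. Incidentally, this rescale-then-modified-Alder route is also what Duncan et al.\ used for $a=2$ (their $d\ge 62$ is $\lceil d/2\rceil\ge 31$), so your opening premise about their method is off. The cases $d=1,2$ are handled separately via the Glaisher bijection and the second Rogers--Ramanujan identity, as you anticipated.
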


 When $a \geq 4$, it is not the case that $\Delta_{d}^{(a,-)}(n)$ is non-negative for all $d ,n \geq 1$. By considering the functions 
\begin{align*}
        \Qdadashdash &:= p(n \mid \text{ parts} \equiv \pm a \Mod{d + 3}, \text{excluding the parts } a \text{ and } d+3-a),
        \\
           \Ddashdasha{d}{a}{n} &:= q_{d}^{(a)}(n) - \Qdashdash{d}{a}{n},
\end{align*}
Duncan, et al. conjectured the following analog of Kang and Park's conjecture. 
\begin{conjecture}[Duncan, et al. \cite{ourpaper}, 2021] \label{conj:genkp}
For $d, a,n\geq $1 with $1 \leq a \leq d + 2$,
\[
\Delta_{d}^{(a,-,-)}(n) \geq 0. 
\]
\end{conjecture}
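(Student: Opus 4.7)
The plan is to mirror the strategy behind Theorems \ref{thm:duncan, et.all} and \ref{thm:a=3}: rather than establishing Conjecture \ref{conj:genkp} in full generality, I would fix $a$ and prove $\Daadashdash \geq 0$ for all $d$ outside an explicit finite exceptional set depending on $a$, then handle the remaining sporadic small $d$ either by direct computation or by an asymptotic argument in $n$. The core mechanism is an ``Alder with restrictions'' inequality that reduces the statement to the now-complete resolution of Alder's conjecture by Andrews \cite{andrews_partition_1971}, Yee \cite{yee_alders_2008}, and Alfes--Jameson--Lemke Oliver \cite{alfes_proof_2011}.

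Concretely, the generating function for $\Qdashdash{d}{a}{n}$ is
\begin{equation}
\frac{1}{\poch{d+3+a}{d+3}\poch{2(d+3)-a}{d+3}} = (1-q^a)(1-q^{d+3-a}) \cdot \frac{1}{\poch{a}{d+3}\poch{d+3-a}{d+3}},
\end{equation}
so excluding the parts $a$ and $d+3-a$ amounts to multiplying the unrestricted modular generating function by $(1-q^a)(1-q^{d+3-a})$. On the other side, $\q{d}{a}{n}$ counts partitions with parts $\geq a$ and successive differences $\geq d$, and the standard bijection subtracting $a-1$ from each part sends these to partitions of $n-(a-1)k$ with parts $\geq 1$ and the same difference condition, with exactly $k$ parts. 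I would decompose $\q{d}{a}{n}$ in this way, compare with the modular side term by term in $k$, and show that the factor $(1-q^{a})(1-q^{d+3-a})$ absorbs the $q^{(a-1)k}$ excess introduced by the shift. The problem then reduces to a modified Alder-function inequality organized by part-count, which is precisely the framework of Duncan et al.\ \cite{ourpaper}.

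The main obstacle I anticipate is uniformity in $k$ when combining the two removed residues with the linear-in-$a$ shift. For $a \in \{2,3\}$ a single exclusion sufficed because the defect $(a-1)k$ was small enough to be absorbed by the single factor $(1-q^{d+3-a})$; for general $a$ the defect grows linearly in $a$, so both factors $(1-q^{a})$ and $(1-q^{d+3-a})$ must be exploited simultaneously. This forces a careful split into a small-$k$ regime, where the combinatorial inequality must be checked directly against a Schur-style expansion of $\f{a}$, and a large-$k$ regime, where Gaussian-polynomial estimates on the modular side dominate. I expect the resulting threshold $D_a$ to grow at worst linearly in $a$, matching the conditional linear lower bound mentioned in the abstract.

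For values of $d$ below $D_a$, the plan is to close the gap via the Meinardus-type asymptotic analysis used by Alfes--Jameson--Lemke Oliver \cite{alfes_proof_2011} to finish Alder's original conjecture: this yields $\Daadashdash \geq 0$ for $n$ sufficiently large in terms of $(d,a)$, after which only finitely many pairs $(d,n)$ remain and can be verified directly by generating-function computation, furnishing the asymptotic evidence promised in the abstract.
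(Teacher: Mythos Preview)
First, note that Conjecture \ref{conj:genkp} is \emph{not} proved in the paper; the paper establishes only the partial result Theorem \ref{thm:genkp unc} (valid for $\lceil d/a\rceil \geq 2^{a+3}-1$) and the conditional/limited-range Theorem \ref{thm:genkp con}. So a full proof is not the target; the question is whether your partial-result strategy matches or improves on the paper's.

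Your route and the paper's route diverge at the very first reduction. The paper does \emph{not} shift parts by $a-1$ and decompose by the number of parts $k$; it \emph{rescales} by $a$. Via Lemma \ref{lem:qstarlemma} one has $q_d^{(a)}(n)\geq q_{\lceil d/a\rceil}^{(1)}(\lceil n/a\rceil)$, and on the modular side Lemma \ref{lem:mod andrews} (the $y_1=a$ variant of Andrews' comparison theorem) lets one replace $\Qdashdash{d}{a}{n}$ by a function with parts in $T^a_{a,d+h_d^{(a)}}$, which after dividing parts by $a$ becomes $\rho(T^1_{a,k};m)$ with $k=\lceil d/a\rceil$, $m=\lceil n/a\rceil$. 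Everything is then pushed through the Andrews/Yee intermediates $\rho(T_{r,k};m)$ and $\mathcal{G}_k^{(1)}(m)$, giving the chain (\ref{eq: KeyIneqChainLittle}). The exponential threshold $2^{a+3}-1$ arises because one needs at least $a$ residue classes in $T^1_{a,k}$ to dominate the two residue classes of $\Qdashdash{d}{a}{\cdot}$, and $T^1_{a,k}$ only has that many when $k\geq 2^{a+3}-1$.

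Your proposed mechanism has a concrete gap at the step ``compare with the modular side term by term in $k$, and show that the factor $(1-q^a)(1-q^{d+3-a})$ absorbs the $q^{(a-1)k}$ excess.'' The modular function $\Qdashdash{d}{a}{n}$ has no decomposition indexed by the part-count $k$ of a $d$-distinct partition, so there is nothing to compare term-by-term against. What the factor $(1-q^a)(1-q^{d+3-a})$ actually produces is a signed inclusion--exclusion $Q_d^{(a)}(n)-Q_d^{(a)}(n-a)-Q_d^{(a)}(n-(d+3-a))+Q_d^{(a)}(n-(d+3))$, and there is no evident way this alternating sum compensates a shift of $(a-1)k$ that varies with $k$ on the $q$-side. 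This is also not ``precisely the framework of Duncan et al.'': their method, as deployed here, is the rescaling-and-comparison chain above, not a part-count argument. Finally, your expectation of a linear threshold $D_a$ is exactly what the paper can obtain \emph{only conditionally} on Conjecture \ref{conj:aldermconj}; unconditionally the paper's method gives the exponential bound, and Section \ref{sect:asymptotics} explains why the residual small-$d$ cases resist the Meinardus/Alfes--Jameson--Lemke Oliver asymptotics you invoke.
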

They were able to prove infinitely many cases of Conjecture \ref{conj:genkp} by employing the methods of Yee \cite{yee_alders_2008} and Andrews \cite{andrews_partition_1971}.

Throughout this paper, we let $h_{d}^{(a)} $ and $h_{n}^{(a)} $ denote the least non-negative residues of $-d$ and $-n$ modulo $a$ respectively. Note that $\frac{d + h_{d}^{(a)} }{a} = \lceil \frac{d}{a} \rceil$ and $\frac{n+ h_{n}^{(a)} }{a} = \lceil \frac{n}{a} \rceil$.

We prove a strengthening of \cite[Theorem 1.6]{ourpaper}. 
\begin{theorem}\label{thm:strengthen unc} 
 For $a \geq 4$ and $\lceil \frac{d}{a} \rceil = 31$ or  $\lceil \frac{d}{a} \rceil \geq 63$ such that $h_{d}^{(a)} \leq  3$, $d \not\equiv -3\pmod{a}$, and $n \geq 1$, 
\[
\Delta_{d}^{(a,-)}(n) \geq 0.
\]
Moreover, when $d \equiv -3\pmod{a}$, $\Delta_{d}^{(a,-)}(n) \geq 0$ for all $n \neq d+3+a$.

In particular, for $a =4$,\ $121 \leq d \leq 124$ or $d \geq 249$, and $n \geq 1$, $n \neq d +7$ when $d \equiv 1 \pmod{4}$,
\[
\Delta_{d}^{(4, -)}(n) = \q{d}{4}{n} -  \Qdash{d}{4}{n} \geq 0 .
\]
\end{theorem}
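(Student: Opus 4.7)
The plan is to adapt the argument used to prove Theorem \ref{thm:a=3} to higher values of $a$, with the divisibility hypothesis $h_d^{(a)} \leq 3$ restricting to those $d$ for which rescaling by $a$ gives clean control of the difference condition. The main inputs are the resolved Alder conjecture (Andrews at $d = 2^r - 1$, Yee at $d \geq 32$), applied after a scaling that sends a partition with difference at least $d$ and minimum part at least $a$ to one with difference at least $\lceil d/a \rceil$. The two cases $\lceil d/a \rceil = 31$ (via Andrews, since $31 = 2^5 - 1$) and $\lceil d/a \rceil \geq 63$ (via Yee, used in combination with the doubly-excluded analog Theorem 1.6 of Duncan et al., which itself requires $\lceil d/a \rceil \geq 32$) together account for the dichotomy in the hypothesis.

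I would begin by stratifying a partition enumerated by $Q_d^{(a,-)}(n)$ by the number $k$ of parts equal to $a$, so that
\[
Q_d^{(a,-)}(n) = \sum_{k \geq 0} Q_d^{(a,-,-)}(n - ka).
\]
For the $k = 0$ stratum, the claim reduces to Conjecture \ref{conj:genkp} in the subrange of $d$ already controlled by Duncan et al. For each $k \geq 1$, I would construct an injection from the partitions in $Q_d^{(a,-)}(n)$ containing exactly $k$ copies of $a$ into $q_d^{(a)}(n)$, by consolidating those $k$ copies of $a$ together with the remaining parts (drawn from $\{d+3+a, 2(d+3)-a, \ldots\}$) into a single partition with minimum at least $a$ and consecutive gaps at least $d$. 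The arithmetic condition $h_d^{(a)} \leq 3$ is precisely what ensures that consecutive allowed residues in the $\pm a \pmod{d+3}$ system survive the scaling with gaps at least $d$ in the image.

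The exception $n = d+3+a$ when $d \equiv -3 \pmod{a}$ is forced and cannot be avoided by any injective argument: in this case $(d+3+a)/a$ is a positive integer, so the partition into $(d+3+a)/a$ copies of $a$ lies in $Q_d^{(a,-)}(d+3+a)$, while a direct case check gives $q_d^{(a)}(d+3+a) = 1$ for $a \geq 4$ (the unique element being the one-part partition $(d+3+a)$), so $\Delta_d^{(a,-)}(d+3+a) \leq -1$. Outside this single value of $n$, the stratum injections recover nonnegativity.

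The principal obstacle will be establishing the $k \geq 1$ stratum injections uniformly and verifying that their images are disjoint from the $k = 0$ contribution and from one another, without introducing collisions across strata; keeping track of this across the four permitted residues $d \equiv 0, -1, -2, -3 \pmod{a}$ requires a careful residue-by-residue analysis. The ``In particular'' conclusion for $a = 4$ then follows immediately by noting that $\lceil d/4 \rceil = 31$ exactly when $d \in \{121, 122, 123, 124\}$, $\lceil d/4 \rceil \geq 63$ exactly when $d \geq 249$, and $h_d^{(4)} \leq 3$ is automatic.
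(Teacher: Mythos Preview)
Your proposal has a genuine gap. The stratification $Q_d^{(a,-)}(n)=\sum_{k\ge 0}Q_d^{(a,-,-)}(n-ka)$ is correct, and your analysis of the exceptional value $n=d+3+a$ is fine, but the plan collapses at the combination step. For $k=0$ you invoke only an \emph{inequality} from Duncan et al., not an explicit injection; without knowing which partitions of $q_d^{(a)}(n)$ are occupied by the $k=0$ stratum there is no way to certify that your (as yet unconstructed) $k\ge 1$ injections land in the complement. You flag this as the ``principal obstacle'' but offer no mechanism to resolve it, and ``consolidating those $k$ copies of $a$ together with the remaining parts'' is not a construction.

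The paper does not stratify at all. It proves, for $n\ge d+2a$, the single chain
\[
q_d^{(a)}(n)\ \ge\ q_{\lceil d/a\rceil}^{(1)}\!\bigl(\lceil n/a\rceil\bigr)\ \ge\ Q_{\lceil d/a\rceil-3}^{(1,-)}\!\bigl(\lceil n/a\rceil\bigr)\ =\ Q_{d+h_d^{(a)}-3}^{(a,-)}\!\bigl(n+h_n^{(a)}\bigr)\ \ge\ Q_d^{(a,-)}(n),
\]
handling small $n$ by direct inspection. The first step is Lemma~\ref{lem:qstarlemma} and the equality is Lemma~\ref{lem:Qidentitylemma}. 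The middle step is not the ordinary Alder conjecture but Proposition~\ref{prop:mod alders}, the \emph{shifted} Alder-type inequality $q_k^{(1)}(m)\ge Q_{k-3}^{(1,-)}(m)$ established in Section~\ref{sect:modalders}; this shift by $3$ is the whole point and is what your plan is missing. The final step is Lemma~\ref{lem:mod andrews}, a termwise comparison of allowed parts: the $i$th part on the left is $\le$ the $i$th part on the right precisely when $d+h_d^{(a)}\le d+3$, i.e.\ $h_d^{(a)}\le 3$. That is the actual role of the hypothesis $h_d^{(a)}\le 3$; it has nothing to do with ``consecutive allowed residues surviving the scaling with gaps at least $d$.''
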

\begin{remark}\label{rem:strengthen-inequality}
This is a strengthening of \cite[Theorem 1.6]{ourpaper} since 
\[
\Delta_{d}^{(a,-,-)}(n) \geq \Delta_{d}^{(a,-)}(n)\geq \Delta_{d}^{(a)}(n) \text{ for all } d,a, n \geq 1. 
\]
\end{remark}
By developing a new method in comparing these partition functions, we also prove Conjecture \ref{conj:genkp} for all but finitely many $d$ for fixed $a$. 
\begin{theorem}\label{thm:genkp unc} 
For $d,n \geq 1$ and $a \geq 5$ such that $\lceil \frac{d}{a} \rceil \geq 2^{a+3} - 1$,
\[
\Delta_{d}^{(a,-,-)}(n) \geq 0. 
\]
\end{theorem}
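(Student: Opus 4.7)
The plan is to reduce Conjecture \ref{conj:genkp} in the regime $\lceil d/a \rceil \geq 2^{a+3}-1$ to Andrews' proof of Alder's conjecture at the dilated difference $D := 2^{a+3}-1$. Since $D \leq \lceil d/a \rceil$, Andrews' theorem \cite{andrews_partition_1971} gives, for every integer $m \geq 1$,
\[
    p(m \mid \text{difference} \geq D) \;\geq\; p(m \mid \text{parts} \equiv \pm 1 \Mod{D+3}),
\]
and this Alder-type inequality at a controlled intermediate size $m$ will serve as the backbone of the argument.

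First I would construct a combinatorial compression $\phi$ that sends each partition $\pi$ counted by $\Qdashdash{d}{a}{n}$ to a partition of some integer $m = m(\pi)$ with parts congruent to $\pm 1 \pmod{D+3}$. Every part of $\pi$ has the shape $k(d+3) \pm a$ for some $k \geq 1$ (the forbidden parts $a$ and $d+3-a$ correspond exactly to the $k=0$ cases), so the natural replacement $k(d+3) \pm a \mapsto k(D+3) \pm 1$ yields a valid partition with parts $\equiv \pm 1 \pmod{D+3}$. The map $\phi$ is injective because the pair $(k,\pm)$ can be read directly off each image part, and the image excludes the smallest values $1$ and $D+2$.

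Second I would construct a dilation $\psi$ in the opposite direction: given a partition $\mu$ of $m$ with differences at least $D$, I would produce a partition of $n$ with parts $\geq a$ and differences $\geq d$, counted by $\q{d}{a}{n}$. The basic move is to scale each part $\mu_i \mapsto a\mu_i$, whose differences are then $\geq aD \geq d$ and whose parts are $\geq a$, and then distribute the surplus $n - am$ as nonnegative corrections starting from the largest part, using the slack $aD - d = h_d^{(a)}$ between the scaled and required differences to keep the gaps intact. Chaining $\phi$, the Andrews bound at size $m$, and $\psi$ then yields $\Qdashdash{d}{a}{n} \leq \q{d}{a}{n}$.

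The main obstacle is ensuring the intermediate size $m$ is compatible in both directions: the $\phi$-image records $m$ as a function of $\pi$'s structure, while $\psi$ must land in a partition of $n$ for every element of the image. I expect to split into cases by the residue of $d$ modulo $a$, since the slack $h_d^{(a)}$ dictates how much correction $\psi$ can absorb per part; the case $d \equiv -3 \pmod a$, where $a \mid d+3$, gives the cleanest moduli alignment and serves as a base case. The exclusions of the parts $a$ and $d+3-a$ on the $\Qdashdash{d}{a}{n}$ side translate, under $\phi$, precisely to the absence of the parts $1$ and $D+2$ on the Alder side, and it is this alignment that makes $\psi$ land in $\q{d}{a}{n}$ without collisions or overshooting.
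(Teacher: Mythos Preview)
Your plan contains a concrete inconsistency that breaks both directions of the chain. You set $D := 2^{a+3}-1$ so that Andrews' case of Alder applies, and then write ``the slack $aD - d = h_d^{(a)}$'' and ``differences are then $\geq aD \geq d$''. But $aD - d = h_d^{(a)}$ is the statement $D = \lceil d/a\rceil$, which is incompatible with $D = 2^{a+3}-1$ whenever $\lceil d/a\rceil > 2^{a+3}-1$; and for such $d$ (which are exactly the generic case in the theorem) one has $d > aD$, so scaling a difference-$D$ partition by $a$ only gives gaps $\geq aD < d$, and $\psi$ does not land in the set counted by $q_d^{(a)}(n)$. The second gap is that your intermediate size $m = m(\pi)$ genuinely depends on $\pi$: two partitions of $n$ with different numbers of parts, or different sign patterns $\pm a$, have different $m(\pi)$, so $\phi$ injects into $\bigcup_m Q_D^{(1,-,-)}(m)$, not into $Q_D^{(1,-,-)}(m^*)$ for a single $m^*$. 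Because you have explicitly excluded the part $1$ on the image side, there is no filler to absorb the discrepancy, and the three steps cannot be composed at a common $m$. Your closing remark about ``splitting into cases by the residue of $d$ modulo $a$'' does not address either issue.

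For comparison, the paper's proof fixes $k = \lceil d/a\rceil$ and $m = \lceil n/a\rceil$ from the outset and passes through the intermediate function $\rho(T_{a,k}^1;m)$ (parts in $a$ residue classes mod $2k$) rather than through $Q_D^{(1)}(m)$. On the $q$-side it uses Lemma~\ref{lem:qstarlemma} to get $q_d^{(a)}(n) \geq q_k^{(1)}(m)$; on the $Q$-side it uses the modified comparison Lemma~\ref{lem:mod andrews} (where the smallest target part $a$ divides every target part, so slack is absorbed into copies of $a$) to get $\rho(T^a_{a,d+h_d^{(a)}};n+h_n^{(a)}) \geq Q_d^{(a,-,-)}(n)$. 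The remaining inequality $q_k^{(1)}(m) \geq \rho(T_{a,k}^1;m)$ is not a single application of Andrews: it uses Yee's $\mathcal{G}_k^{(1)}$ when $k \neq 2^s-1$, Andrews' theorem when $k = 2^s-1$, and a separate hand count for small $m$.
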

\begin{remark}\label{rem:a not 5}
The cases of Conjecture \ref{conj:genkp} for $1 \leq a \leq 3$ have already been proven for all but finitely many $d$ by Alder's conjecture, Theorems \ref{thm:duncan, et.all} and   \ref{thm:a=3}. Note when $a = 4$ and $n = d + 7$ that Conjecture \ref{conj:genkp} is trivially verified. Hence, with Theorem \ref{thm:strengthen unc}, the case of Conjecture \ref{conj:genkp} for $a=4$ have been proven for all but finitely many $d$. Hence, we'll focus on proving Theorem \ref{thm:genkp unc}. 
\end{remark}
It is a natural question of determining for fixed $a \geq 1$ which $d,n \geq 1$ are sufficient to allow 
\[
\Delta_{d}^{(a,-)}(n) \geq 0. 
\]
Towards answering this question, we present the following Alder-type inequality. 
\begin{conjecture}\label{conj:aldermconj} 
For $d,n\geq 12$ such that $n \geq d +2$,
\[
q_{d}^{(1)}(n) - Q_{d-4}^{(1,-)}(n) \geq 0. 
\]
\end{conjecture}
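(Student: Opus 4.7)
The plan is to adapt the combinatorial/analytic framework of Alder-type inequalities to this tighter setting. The allowed parts on the right-hand side form the set
\[
A_d = \{1,\, d,\, 2d-3,\, 2d-1,\, 3d-4,\, 3d-2,\, \ldots\},
\]
i.e., positive integers $\equiv \pm 1 \pmod{d-1}$ with $d-2$ removed. They split naturally into the pairs $\{kd - k - 1,\, kd - k + 1\}$ for $k \geq 2$, together with the isolated small parts $1$ and $d$ (the $k=1$ pair $\{d-2,d\}$ being truncated by the exclusion of $d-2$). I would construct an explicit injection $\phi$ from partitions counted by $\Qdash{d-4}{1}{n}$ into partitions counted by $\q{d}{1}{n}$.

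The first component of $\phi$ would treat partitions whose non-unit parts lie entirely in the ``upper branch'' $\{d, 2d-1, 3d-2, \ldots\}$; here one needs a sharper analogue of Yee's Alder-type injection \cite{yee_alders_2008} capable of handling the modulus $d-1$ in place of the classical $d+3$. The second component handles partitions using at least one ``lower branch'' part $kd - k - 1$ with $k \geq 2$: shift that part up to $kd + 1$, which lies in the upper branch, and absorb the deficit $k + 2$ into the multiplicity of the part $1$. The hypothesis $n \geq d + 2$ combined with $d \geq 12$ is meant to guarantee that sufficiently many $1$'s are available to perform the absorption, while injectivity would follow by recording, for each output partition, the multiset of shifts applied. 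For the asymptotic tail, Meinardus's method \cite{meinardus_asymptotische_1954,alfes_proof_2011} should give $\q{d}{1}{n}/\Qdash{d-4}{1}{n} \to \infty$ as $n \to \infty$ for fixed $d$, and the intermediate range $d + 2 \leq n = O(d)$ can be handled by a uniform combinatorial count or by direct computation for small $d$.

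The main obstacle is that the modulus $d - 1$ is strictly smaller than Alder's $d + 3$, so the right-hand side counts substantially more partitions; even the upper branch alone properly contains the set of Alder-allowed parts. Equality holds already at $n = d + 2$ and $n = d + 3$, indicating that $\phi$ must be essentially tight and cannot afford to waste partitions. Strengthening Alder's injection to the tighter modulus, while simultaneously avoiding collisions with the lower-branch image, is the crux of the argument. The alternating spacings $d - 3, 2, d - 3, 2, \ldots$ in $A_d$ create dense clusters that are particularly difficult to disentangle into parts differing by at least $d$, and controlling the bookkeeping around these clusters is where I expect the bulk of the technical difficulty to lie.
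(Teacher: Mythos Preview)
First, note that the statement you are addressing is labeled a \emph{conjecture} in the paper and is \emph{not} proved there in full. The paper establishes only the restricted range $d+2 \leq n \leq 5d$ (Lemma~\ref{lem: Littlem}), and it does so by direct interval-by-interval counting: on each subinterval $[jd+c_1,\,(j+1)d+c_2]$ it exhibits enough two- and three-part partitions counted by $q_d^{(1)}$ and enumerates all partitions counted by $Q_{d-4}^{(1,-)}$ by their largest part. The remark following the conjecture records that Armstrong et al.\ later proved it for $d \geq 105$; the paper itself leaves the range $12 \leq d \leq 104$, $n > 5d$ open, and Section~\ref{sect:asymptotics} explains why: the modulus $d-1$ is below the Alder modulus $d+3$, so the standard Andrews--Yee machinery does not apply directly, and the Kang--Kim asymptotics only kick in once $d$ is large enough that $d-1 > M_d$.

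Your proposal, by contrast, sketches an attack on the full conjecture, and the sketch has a concrete gap in its second component. You write that for a lower-branch part $kd-k-1$ you would ``shift that part up to $kd+1$, which lies in the upper branch, and absorb the deficit $k+2$ into the multiplicity of the part $1$.'' But $kd+1$ is not in your upper branch $\{d,\,2d-1,\,3d-2,\ldots\} = \{k(d-1)+1 : k \geq 1\}$, nor is it in $A_d$ at all; and even if you meant to shift $k(d-1)-1$ to $k(d-1)+1$, the deficit would be $2$, not $k+2$. More seriously, whatever intermediate reduction you perform on the $Q$-side, the final target $q_d^{(1)}(n)$ consists of partitions whose parts differ by at least $d \geq 12$, so there is no ``multiplicity of $1$'' available in the image to absorb anything---each part occurs at most once. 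Your first component likewise hand-waves the crux: a ``sharper analogue of Yee's injection'' for modulus $d-1$ is precisely what is not known, and the paper's own discussion in Section~\ref{sect:asymptotics} indicates that this tightening is the genuine obstruction. The honest status is that your asymptotic and small-$n$ pieces are reasonable in spirit (and the paper carries out the small-$n$ piece rigorously), but the combinatorial injection in the middle range is not a plan so much as a restatement of the problem.
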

\begin{remark}
We have learned that Armstrong, et.al \cite{armstrong_alder_type_2022} have proven Conjecture \ref{conj:aldermconj} for $n \geq 1$ and $d \geq 105$. They were also able to obtain a generalization of Conjecture \ref{conj:aldermconj}, showing for $N \geq 2$, $d\geq \max\{63, 46N - 79\} $, and $n \geq d + 2$, that
\[
q_{d}^{(1)}(n) \geq \Qdash{d -N}{1}{n}. 
\]
\end{remark}
Assuming Conjecture \ref{conj:aldermconj}, we are able to prove a strengthening of Conjecture \ref{conj:genkp} which allows $a$ as a part. This conditional result  remarkably reduces  the exponential lower bound of $d$ in Theorem \ref{thm:genkp unc} to a linear one. 
\begin{theorem}\label{thm:genkp con} 
Suppose that Conjecture \ref{conj:aldermconj} holds for the prescribed bounds. Then for $a \geq 1$,  $\lceil \frac{d}{a} \rceil \geq 12$, and $n \geq 5\lceil\frac{d}{a}\rceil + 1$,  
\[
\Delta_{d}^{(a,-)}(n) \geq 0.
\]
Moreover, suppose the bounds on $a$ and $d$ are as above, and $1\leq \lceil \frac{n}{a} \rceil \leq  5\lceil \frac{d}{a} \rceil  $, $d \not\equiv -3\pmod{a}$, unconditionally we have
\[
\Delta_{d}^{(a,-)}(n) \geq 0.
\]
If $d \equiv -3\pmod{a}$, then unconditionally $\Delta_{d}^{(a, -)}(n) \geq 0$ for all $1 \leq \lceil \frac{n}{a}\rceil \leq 5\lceil \frac{d}{a}\rceil$ and $n \neq d + 3+a$. 
\end{theorem}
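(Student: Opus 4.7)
The plan is to split the argument based on the size of $n$ relative to $D := \lceil d/a\rceil$: a conditional regime $n \geq 5D+1$ invoking Conjecture~\ref{conj:aldermconj}, and an unconditional regime $\lceil n/a\rceil \leq 5D$ handled via the counting methods of Theorem~\ref{thm:strengthen unc} and of Duncan et al.~\cite{ourpaper}.

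For the conditional regime $n \geq 5D+1$, the strategy is to reduce the general-$a$ inequality to the $a = 1$ case supplied by Conjecture~\ref{conj:aldermconj} applied with parameter $D$, whose hypotheses $D \geq 12$ and $n \geq D+2$ are met by assumption. The reduction proceeds via two combinatorial injections bridged by the Conjecture. On the $Q$-side, each part of a partition $\mu \in Q_d^{(a,-)}(n)$ decomposes as $c(d+3) + \varepsilon a$ with $\varepsilon \in \{\pm 1\}$, where the excluded value $d+3-a$ corresponds exactly to $(c,\varepsilon) = (1,-1)$, mirroring the exclusion of $D-2$ in $Q_{D-4}^{(1,-)}$; this lets one produce a partition in $Q_{D-4}^{(1,-)}$ at a weight related to $n' := \lceil n/a \rceil$ by contracting each part to $c(D-1) + \varepsilon$. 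On the $q$-side, one scales a partition $\tilde\lambda \in q_D^{(1)}(n')$ by $a$ and absorbs the deficit $h_n^{(a)} = an' - n$ into a suitable part to obtain an element of $q_d^{(a)}(n)$. Chaining these injections via Conjecture~\ref{conj:aldermconj} gives $Q_d^{(a,-)}(n) \leq q_d^{(a)}(n)$.

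For the unconditional regime $\lceil n/a\rceil \leq 5D$ with $d \not\equiv -3 \pmod a$, I would build a direct injection $Q_d^{(a,-)}(n) \hookrightarrow q_d^{(a)}(n)$ along the lines of Theorem~\ref{thm:strengthen unc} and \cite{ourpaper}. Since $n$ is small relative to $d$, any $\mu \in Q_d^{(a,-)}(n)$ has at most $\lfloor n/a\rfloor \leq 5D$ parts, and a case analysis on the residue $h_d^{(a)}$ produces the target partition in $q_d^{(a)}(n)$ by a staircase-style rearrangement. The case $d \equiv -3 \pmod a$ is treated separately because $a \mid d+3$ forces all parts of $Q_d^{(a,-)}$-partitions to be multiples of $a$; the injection then reduces essentially to a pure scaling, except at $n = d+3+a$, where the all-$a$'s partition $(a,\ldots,a)$ of length $(d+3)/a+1$ on the $Q$-side has no counterpart in $q_d^{(a)}$, producing the stated exception.

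The principal technical challenge lies in the scaling-and-correction injections in the conditional regime: the part weights must be tracked across contraction, Conjecture~\ref{conj:aldermconj}, and inflation so that sums match at each stage, and the deficit $h_n^{(a)}$ must be absorbed during the lift from $q_D^{(1)}(n')$ to $q_d^{(a)}(n)$ without violating the minimum-part-$\geq a$ or gap-$\geq d$ conditions. This requires the lifted partition to have either enough parts or a sufficiently large leading part to accommodate the correction uniformly across the residue classes of $d$ and $n$ modulo $a$, and it is precisely the hypothesis $n \geq 5D+1$ that guarantees this.
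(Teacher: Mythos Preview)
Your conditional argument is essentially the paper's: reduce to the $a=1$ level, invoke Conjecture~\ref{conj:aldermconj} at $D=\lceil d/a\rceil$, and pass back. The paper packages the two bridging injections as Lemma~\ref{lem:qstarlemma} (for the $q$-side) and Lemma~\ref{lem:mod andrews} together with Lemma~\ref{lem:Qidentitylemma} (for the $Q$-side), rather than writing them by hand. Two small corrections: your $Q$-side contraction $c(d+3)+\varepsilon a \mapsto c(D-1)+\varepsilon$ lands at a weight that depends on the partition, not at a fixed $n'$, so you must pad by $1$'s to reach $n'$ and check injectivity survives the padding (it does, but this step is missing). Second, your attribution of the hypothesis $n\geq 5D+1$ to the deficit-absorption step is incorrect; Lemma~\ref{lem:qstarlemma} only needs $n\geq d+2a$. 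The bound $5D+1$ is there solely to delimit the regime in which one appeals to the conjecture rather than to the unconditional argument.

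Your unconditional argument, however, departs from the paper and is where the real gap lies. The paper does \emph{not} build a direct injection $Q_d^{(a,-)}(n)\hookrightarrow q_d^{(a)}(n)$ for small $n$. Instead it reuses the \emph{same} inequality chain as in the conditional part, but replaces the appeal to Conjecture~\ref{conj:aldermconj} by an unconditional proof of that inequality in the restricted range $D+2\leq m\leq 5D$ (this is Lemma~\ref{lem: Littlem}, proved by monotonicity and explicit counting on short intervals of $m$), together with a separate elementary lemma (Lemma~\ref{lem:very-small-n-genkpcon}) for $1\leq n\leq d+3a$. Your proposed ``staircase-style rearrangement with case analysis on $h_d^{(a)}$'' is too vague to evaluate and faces a genuine obstacle: a partition counted by $Q_d^{(a,-)}(n)$ can contain up to roughly $5D$ copies of the part $a$, and there is no evident way to rearrange these into a gap-$d$ partition of the same $n$ without the intermediate passage through the $a=1$ problem. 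Theorem~\ref{thm:strengthen unc}, which you cite as a model, itself uses the chain-through-$a=1$ method (via Proposition~\ref{prop:mod alders}), not a direct injection.
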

\begin{remark}
The choice of the upper bound $1 \leq \lceil \frac{n}{a} \rceil \leq 5\lceil \frac{d}{a} \rceil $ for the unconditional component of Theorem \ref{thm:genkp con} comes from applying the function $\mathcal{G}_{d}^{(1)}(n)$ as defined by Yee \cite{yee_alders_2008}. 
\end{remark}

We now outline the rest of this paper. In Section \ref{sect:prelims}, we present a modification of \cite[Theorem 3]{andrews_partition_1971} and establish some other important lemmas. In Section \ref{sect:modalders}, we prove a modification of Alder's conjecture, which forms the critical component of the proofs of Theorems \ref{thm:a=3} and \ref{thm:strengthen unc}. In Section \ref{sect:a=3 and uncstrengthgenkp}, we prove Theorems \ref{thm:a=3} and \ref{thm:strengthen unc} by reducing to our result in Section \ref{sect:modalders}. In Section \ref{sect: generalized KP exp}, we prove Theorem \ref{thm:genkp unc} by using the partition counting functions from Yee \cite{yee_alders_2008}. In Section \ref{sect: generalized KP con}, we conditionally prove Theorem \ref{thm:genkp con} by reducing to the case of $a = 1$. We also prove Conjecture \ref{conj:aldermconj} for small $n$ to derive our unconditional result in Theorem \ref{thm:genkp con}. Finally, in Section \ref{sect:asymptotics}, we obtain asymptotic evidence that Proposition \ref{prop:mod alders} and Conjecture \ref{conj:aldermconj} holds for large $n$ and $d \geq 10$ and $d \geq 12$ respectively. We conclude with potential avenues for resolving more small $d$ cases of Conjectures \ref{conj:a=3}, \ref{conj:genkp} and extending Theorem \ref{thm:strengthen unc}.   

\section{Preliminaries}\label{sect:prelims}
In this section, we first establish generating functions for our partition counting functions. We then introduce several lemmas which our results are based on.

We find by employing combinatorial methods that the generating functions for \(\qda\) and \(\Qda\) for $1 \leq a \leq d + 2$ are
    \begin{align*}
        \sum_{n = 0}^{\infty} \qda q^n &= \sum_{k = 0}^{\infty} \frac{q^{d{k \choose 2} + ka}}{(q\mbox{;}q)_{k}}, \\
  \sum_{n = 0}^{\infty} \Qda q^n &= \frac{1}{(q^{d + 3 - a}\mbox{;}q^{d + 3})_{\infty}(q^{a}\mbox{;}q^{d + 3})_{\infty}}. 
\end{align*}

 As described in \cite{ourpaper}, we find by removing the $d+3-a$ term that the generating function for $\Qdash{d}{a}{n}$ is \begin{equation*}
        \displaystyle\sum_{n = 0}^{\infty} \Qdadash q^n =
        \begin{cases}
            \frac{1}{(q^{2d + 6 - a}\mbox{;}q^{d + 3})_{\infty}} &\text{ for } a = \frac{d + 3}{2},\\
            \frac{1}{(q^{2d + 6 - a}\mbox{;}q^{d + 3})_{\infty}(q^{a}\mbox{;}q^{d + 3})_{\infty}} &\text{ otherwise.} \label{eq:Qdbdashgenfunc}
        \end{cases}
    \end{equation*} 

Similarly, we find that the generating function of $\Qdashdash{d}{a}{n}$ to be
 \begin{equation*}
        \displaystyle\sum_{n = 0}^{\infty} \Qdadashdash q^n =
        \begin{cases}
        \frac{1}{(q^{2d + 6 - a}\mbox{;}q^{d + 3})_{\infty}} &\text{ for } a = \frac{d + 3}{2},
        \\
        \frac{1}{(q^{2d + 6 - a}\mbox{;}q^{d + 3})_{\infty}(q^{d + 3 + a}\mbox{;}q^{d + 3})_{\infty}} &\text{ otherwise.} \label{eq:Qdbdashdashgenfunc}
        \end{cases}
    \end{equation*}

We employ the following notation: for a set of positive integers $R$, we define \[
\rho(R\mbox{;}n):= p(n \mid \text{parts from the set } R).
\]

 Suppose that $\lambda$ is a partition. We let $\lambda \vdash n$ to denote that $\lambda$ is a partition of $n$. For fixed positive integers $d$ and $r$, define as in Andrews  \cite{andrews_partition_1971} the partition function $\rho(T_{r,d}\mbox{;} n)$  with set of parts 
\[
T_{r,d} = \{x \in \mathbb{N} \mid x \equiv 1, d +2, \cdots, d + 2^{r-1}\pmod{2d}\}. \]
The generating function of $\rho(T_{r,d}\mbox{;} n)$ is 
\begin{equation*}
 \sum_{n=0}^{\infty}\rho(T_{r,d}\mbox{;} n)q^{n} = \frac{1}{(q^{1}\mbox{;} q^{2d})_{\infty}(q^{d+2}\mbox{;} q^{2d})_{\infty}\cdots (q^{d+2^{r-1}}\mbox{;} q^{2d})_{\infty} }.
\end{equation*}

Suppose that $d \geq 1$. Throughout this paper, we define $r_d$ to be the largest positive integer $r$ such that $d \geq 2^r - 1$. 

We also use the partition function $\mathcal{G}_{d}^{(1)}(n)$ considered by Yee \cite{yee_alders_2008}, which is defined by 
\begin{equation}\label{eq:G function}
    \sum_{n=0}^{\infty}\mathcal{G}_{d}^{(1)}(n)q^n := \frac{(-q^{d+2^{r_d-1}}\mbox{;} q^{2d})_{\infty}} {(q^{1}\mbox{;} q^{2d})_{\infty}(q^{d+2}\mbox{;} q^{2d})_{\infty}\cdots (q^{d+2^{r_d-2}}\mbox{;} q^{2d})_{\infty}}.
\end{equation}

From (\ref{eq:G function}), we find that $\mathcal{G}_{d}^{(1)}(n)$ counts partitions with distinct parts from the set $\{x \in \mathbb{N} \mid x \equiv d + 2^{r_d-1}\pmod{2d}\}$ and unrestricted parts from the set
\[
T_{r_{d}-1,d} = \{y \in \mathbb{N} \mid y \equiv 1, d + 2, \cdots, d+2^{r_{d}-2}\pmod{2d}\}. 
\]

We now state several lemmas that will
allow us to prove our main results. We first present a comparison theorem of Andrews \cite[Theorem 3]{andrews_partition_1971}.

\begin{theorem}[Andrews \cite{andrews_partition_1971}]\label{thm:andrews ST}
Let \(S = \{x_i\}_{i = 1}^{\infty}\) and \(T = \{y_i\}_{i = 1}^{\infty}\) be two strictly increasing sequences of positive integers such that \(y_1 = 1\) and \(x_i \geq y_i\) for all \(i\). Then for all \(n \geq 1\),
\[
    \rho(T\mbox{;}n) \geq \rho(S\mbox{;}n).
\]
\end{theorem}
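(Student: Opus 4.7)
The plan is to establish the inequality by constructing an explicit injection $f$ from partitions of $n$ with parts in $S$ into partitions of $n$ with parts in $T$; the bound $\rho(T;n) \geq \rho(S;n)$ will follow immediately. Given $\lambda \vdash n$ with parts in $S$, I would encode $\lambda$ by its multiplicity sequence $\lambda = \sum_{i \geq 1} m_i x_i$ with $m_i \in \mathbb{Z}_{\geq 0}$. Define $f(\lambda)$ to be the partition of $n$ in which each $y_i$ with $i \geq 2$ appears with multiplicity $m_i$, while $y_1 = 1$ appears with multiplicity $m_1 + \ell$, where $\ell := n - \sum_i m_i y_i$.

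First I would check that $f$ is well-defined. Since $x_i \geq y_i$ by hypothesis, $\ell = \sum_i m_i (x_i - y_i) \geq 0$, so the multiplicity of $1$ in $f(\lambda)$ is nonnegative. A direct computation
\[
\sum_{i \geq 2} m_i y_i + (m_1 + \ell) \cdot 1 = \sum_i m_i y_i + \ell = n
\]
confirms that $f(\lambda)$ has total weight $n$, and all its parts lie in $T$ because $y_1 = 1 \in T$.

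The key observation for injectivity is that the strict monotonicity of $T$ combined with $y_1 = 1$ forces $y_i \geq y_2 > 1$ for every $i \geq 2$. Hence the padded $1$'s contribute only to the multiplicity of $y_1$, so for each $i \geq 2$ the multiplicity of $y_i$ in $f(\lambda)$ equals $m_i$ exactly. If $f(\lambda) = f(\lambda')$ for two source partitions with multiplicity sequences $\{m_i\}$ and $\{m_i'\}$, then $m_i = m_i'$ for every $i \geq 2$, and the shared weight constraint $\sum_i m_i x_i = n = \sum_i m_i' x_i$ forces $m_1 x_1 = m_1' x_1$, whence $m_1 = m_1'$ (using $x_1 \geq 1$) and $\lambda = \lambda'$.

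I anticipate no substantial obstacle: the argument rests entirely on the observation that $y_1 = 1$ supplies an arbitrarily flexible reserve part with which to absorb the deficit $\ell$, while $y_i > 1$ for $i \geq 2$ ensures that the replacement procedure is transparently reversible. The hypothesis $x_i \geq y_i$ is used precisely to guarantee $\ell \geq 0$, and the hypothesis $y_1 = 1$ is what makes the map definable irrespective of how large the deficit turns out to be.
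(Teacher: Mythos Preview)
Your argument is correct; the injection you describe is exactly the standard proof of Andrews' comparison theorem. The paper does not supply its own proof of this statement (it is cited directly from Andrews), but the very same construction is used in the paper's proof of Lemma~\ref{lem:mod andrews}, which generalizes the theorem by replacing the hypothesis $y_1=1$ with $y_1=a$ and divisibility of each $y_i$ by $a$; your map is the special case $a=1$ of that argument.
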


We present the following modification of Theorem \ref{thm:andrews ST}, which will be extensively employed throughout the rest of this paper.

\begin{lemma}\label{lem:mod andrews}
    Let $S = \{x_{i}\}$ and $T = \{y_{i}\}$ be two  strictly increasing sequences of positive integers such that $y_{1} =a$, $a$ divides each $y_{i}$, and $x_{i} \geq y_{i}$ for all $i$. Then for all $n \geq 1$, \[\rho(T\mbox{;} n + h_{n}^{(a)} ) \geq \rho(S\mbox{;} n).\]
\end{lemma}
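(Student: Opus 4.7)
The plan is to prove the inequality by exhibiting an explicit injection from the partitions of $n$ counted by $\rho(S\mbox{;}n)$ into the partitions of $n + h_n^{(a)}$ counted by $\rho(T\mbox{;}n + h_n^{(a)})$. The key structural observation is that every part of $T$ is a multiple of $a$, so $\rho(T\mbox{;}m) = 0$ unless $a \mid m$; in particular $N := n + h_n^{(a)} = a\lceil n/a\rceil$ is the smallest multiple of $a$ not less than $n$, which makes it the natural target weight.

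Given a partition $\pi$ of $n$ with parts from $S$, recorded by multiplicities $m_i \geq 0$ satisfying $\sum_i m_i x_i = n$, I would define the image $\phi(\pi)$ to be the partition that assigns multiplicity $m_i$ to the part $y_i$ for every $i \geq 2$ and multiplicity $m_1 + p$ to the part $y_1 = a$, where
\[
p \;:=\; \frac{N - \sum_{i} m_i y_i}{a}.
\]
To see that $\phi(\pi)$ is a valid partition of $N$ with parts in $T$, the inequality $y_i \leq x_i$ gives $\sum_i m_i y_i \leq \sum_i m_i x_i = n \leq N$, so $p \geq 0$; moreover $a \mid N$ and $a \mid y_i$ for every $i$, so $p$ is in fact a nonnegative integer. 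The total weight of $\phi(\pi)$ is then $\sum_i m_i y_i + pa = N$.

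For injectivity, because $\{y_i\}$ is strictly increasing with $y_1 = a$, no other $y_i$ equals $a$, so for each $i \geq 2$ the multiplicity of $y_i$ in $\phi(\pi)$ agrees with $m_i$ and can be read off directly. With $n$ fixed and the $m_i$ for $i \geq 2$ thereby recovered, $m_1$ is determined by $m_1 x_1 = n - \sum_{i \geq 2} m_i x_i$ since $x_1 \geq y_1 = a \geq 1$. Hence $\pi \mapsto \phi(\pi)$ is one-to-one, yielding $\rho(T\mbox{;}n + h_n^{(a)}) \geq \rho(S\mbox{;}n)$.

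The argument is essentially a packaging of parts, and the main (mild) obstacle is precisely the choice of target weight: one needs $N$ to be a multiple of $a$ with $N \geq n$, and smaller targets fail in general because $T$ supports no partitions of non-multiples of $a$. This is why the inequality is sharpened to $n + h_n^{(a)}$ rather than $n$, and in this sense the lemma functions as a ``ceiling version'' of Andrews' Theorem~\ref{thm:andrews ST}, tailored to the case where the minority sequence $T$ consists entirely of multiples of $a$.
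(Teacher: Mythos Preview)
Your proof is correct and is essentially the same as the paper's: both construct the identical injection that keeps the multiplicity of $y_i$ for $i\ge 2$ and absorbs the leftover weight into extra copies of $y_1=a$. The only cosmetic difference is that the paper expresses your correction term $p$ via the difference sum $\alpha(\lambda)=\sum_i m_i(x_i-y_i)$, writing the new multiplicity of $a$ as $m_1+(h_n^{(a)}+\alpha)/a$, which equals your $m_1+p$.
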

\begin{proof}[Proof.]

  We construct an injection $\varphi: X \to Y$ with $X$ and $Y$ being the sets of partitions counted by $\rho(S\mbox{;}n)$ and $\rho(T\mbox{;}n +h_{n}^{(a)})$ respectively. Suppose $\lambda  \in X$. Let $p_{i}$ and $q_{i}$ denote the multiplicity of $x_{i}$ (wrt $y_{i}$) occurs as a part of $\lambda$ (wrt $\varphi(\lambda)$). 
 
 We define an associated sum of differences for $\lambda$ by
\begin{equation}\label{eq:sum-differences}
    \alpha(\lambda)  := \sum_{i \geq 1}^{}p_{i}(x_{i} - y_{i}).
\end{equation}
We observe that $\alpha(\lambda)$ is non-negative since $x_{i} \geq y_{i}$ for all $i.$ Since $n = \sum_{i \geq 1} p_{i}y_{i} + \alpha$ and $ a$ divides $ \sum_{i \geq 1}p_{i}y_{i}$, we have that $a$ divides $\alpha + h_{n}^{(a)} $.

Using (\ref{eq:sum-differences}), we define 
\begin{equation*}
q_{i} = \begin{cases} 
p_{1} + \frac{h_{n}^{(a)}+ \alpha(\lambda)}{a}, i = 1\\
p_{i}, i \geq 2.
\end{cases}
\end{equation*}

Note
\begin{align*}
\sum_{i\geq 1}q_{i}y_{i} &= \left(p_{1} +  \frac{  \alpha(\lambda)+h_{n}^{(a)}}{a} \right)a+\sum_{i \geq 2}p_{i}y_{i} = p_{1}x_{1}  +  \sum_{i \geq 2}p_{i}x_{i} +h_{n}^{(a)} = n + h_{n}^{(a)}, 
\end{align*}
hence $\varphi$ is well defined. 

We now show that $\varphi$ is injective. Suppose that $\lambda, \lambda' \in X$, such that $\varphi(\lambda) = \varphi(\lambda')$. Let $p_{i}$ and $p_{i}'$ denote the multiplicity of $x_{i}$ occurring as parts of $\lambda$ and $\lambda'$ respectively. Observe from construction of $\varphi$ that we must have $p_{i} = p_{i}'$ for all $i \geq 2$. Note that this implies that 
\[
 \alpha(\lambda)= \sum_{i \geq 1}p_{i}(x_{i}-y_{i}) = \sum_{i \geq 1}p_{i}'(x_{i} - y_{i}) ' = \alpha(\lambda'),
\]
thus $p_{1} = p_{1}'$. Hence, we have $\lambda = \lambda',$ implying that $\varphi$ is injective. 

\end{proof}

We also employ the following lemmas of Duncan et al. \cite[Lemmas 2.4 and 2.5]{ourpaper} which allow us to use our modification of Alder's conjecture as described in Section \ref{sect:modalders}. 
\begin{lemma}[Duncan et al. \cite{ourpaper}]\label{lem:qstarlemma}
    For all $d, a \geq 1$ and $n \geq d + 2a$,
    \[ 
        \qda \geq \q{\lceil\frac{d}{a}\rceil}{1}{\lceil \frac{n}{a} \rceil}.
    \]
\end{lemma}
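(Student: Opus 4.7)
The plan is to construct an explicit injection $\varphi$ from the set of partitions counted by $\q{\lceil \frac{d}{a} \rceil}{1}{\lceil \frac{n}{a} \rceil}$ into the set of partitions counted by $\q{d}{a}{n}$. Write $m = \lceil \frac{n}{a} \rceil$, $D = \lceil \frac{d}{a} \rceil$, and $h = h_{n}^{(a)}$, so that $am - h = n$ with $0 \le h < a$; the hypothesis $n \ge d + 2a$ forces $m \ge D + 2$. The starting observation is that $\lambda \mapsto a\lambda$ sends a partition of $m$ with gaps $\ge D$ to a partition of $am = n + h$ with parts $\ge a$ and gaps $\ge aD \ge d$, so the essential work is to shave off $h$ units from $a\lambda$ while preserving both the minimum-part and the minimum-gap constraints.

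I would split into three cases depending on the shape of $\lambda = (\lambda_1, \ldots, \lambda_k)$. In Case A, when $\lambda_k \ge 2$ or $h = 0$, set $\varphi(\lambda) = (a\lambda_1, \ldots, a\lambda_{k-1}, a\lambda_k - h)$; the smallest part stays $\ge a$ and the bottom gap only widens. In Case B, when $\lambda_k = 1$, $h > 0$, and $k \ge 3$, drop the last part and absorb its mass into the largest one: $\varphi(\lambda) = (a\lambda_1 + (a - h), a\lambda_2, \ldots, a\lambda_{k-1})$. Here the new smallest part is $a\lambda_{k-1} \ge 2a$, and the top gap equals $a(\lambda_1 - \lambda_2) + (a - h) \ge d + h_{d}^{(a)} + (a - h) > d$. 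The leftover situation is $\lambda = (m-1, 1)$ with $h > 0$, where applying Case B would produce the single-part partition $(n)$ and collide with the image of $\lambda = (m)$; instead, I would define $\varphi((m-1, 1)) = (n - a, a)$, whose gap $n - 2a \ge d$ uses the hypothesis $n \ge d + 2a$ in exactly the expected way.

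Verifying that every $\varphi(\lambda)$ lies in $\q{d}{a}{n}$ is routine checking of the sum, smallest part, and consecutive gaps in each case. The main obstacle is injectivity. Cases A, B, and B' produce partitions with $k$, $k - 1$, and $2$ parts respectively, separating most pairs by part count alone. Within a fixed number of parts and assuming $h > 0$, residues modulo $a$ distinguish Case A (which has $\mu_k \equiv -h \not\equiv 0$ and $\mu_1 \equiv 0 \pmod{a}$) from Cases B and B' (which have $\mu_1 \equiv -h$ and $\mu_k \equiv 0 \pmod{a}$). The remaining subtle collision between Case B with $k = 3$ and Case B', both yielding 2-part images with matching residue pattern, is broken by noting that Case B' forces $\mu_2 = a$ while Case B with $k = 3$ has $\mu_2 = a\lambda_2 \ge 2a$. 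Once injectivity is verified, the inequality in the lemma follows at once.
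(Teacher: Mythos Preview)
Your injection is correct. Each of the three cases lands in the target set (the checks on sum, smallest part, and gaps are routine, and the hypothesis $n\ge d+2a$ is used precisely in Case~B$'$ to guarantee the gap $n-2a\ge d$), and the residue-and-size separation you outline establishes injectivity: for $h>0$ the position of the unique part $\not\equiv 0\pmod a$ distinguishes Case~A from Cases~B/B$'$, and the size of the second part ($a$ versus $\ge 2a$) separates B$'$ from the $k=3$ instance of~B. The paper does not supply its own proof of this lemma but quotes it from Duncan et al., so there is no in-paper argument to compare against; your construction is a clean self-contained proof.
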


\begin{lemma}[Duncan et al. \cite{ourpaper}]\label{lem:Qidentitylemma}
    For $d,a,n\geq 1$ such that $a$ divides $d+3$,
    \begin{align*}
        \Qdash{d}{a}{an} &= \Qdash{\frac{d+3}{a}-3}{1}{n},
        \\
         \Qdashdash{d}{a}{an} &= \Qdashdash{\frac{d+3}{a}-3}{1}{n}.
\end{align*}
\end{lemma}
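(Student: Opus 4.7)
The plan is to prove both identities simultaneously by exhibiting a simple scaling bijection, with a generating-function verification as a sanity check. Write $d+3 = ac$ where $c = (d+3)/a$, so that the target index becomes $d' := (d+3)/a - 3 = c - 3$ and hence $d'+3 = c$. The key observation is that every positive integer congruent to $\pm a \pmod{d+3}$ is necessarily a multiple of $a$: indeed, a part of the form $a + k(d+3) = a(1+kc)$ or $-a + k(d+3) = a(kc-1)$ is clearly divisible by $a$. Therefore any partition counted by $Q_d^{(a,-)}(an)$ or $Q_d^{(a,-,-)}(an)$ has all parts divisible by $a$.

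First I would set up the bijection $\varphi(\lambda) = \lambda/a$, obtained by dividing every part by $a$. Sending a partition of $an$ with parts divisible by $a$ to a partition of $n$ is clearly a bijection between partitions of $an$ into multiples of $a$ and partitions of $n$ with arbitrary positive parts, with inverse given by multiplying each part by $a$. It then remains to check that $\varphi$ restricts to a bijection between the two partition classes we care about.

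Next I would verify the residue classes and the exclusions match. Under division by $a$, parts $\equiv a \pmod{ac}$ map to parts $\equiv 1 \pmod{c}$, and parts $\equiv -a \pmod{ac}$ map to parts $\equiv -1 \pmod{c}$; that is, precisely the parts $\equiv \pm 1 \pmod{d'+3}$. The excluded part $d+3-a = a(c-1)$ becomes $c-1 = d'+3-1$, which is exactly the part excluded in $Q_{d'}^{(1,-)}(n)$; this proves the first identity. For the second identity, we additionally exclude $a$ on the left and $1$ on the right, and these correspond under $\varphi$ as well, giving $Q_d^{(a,-,-)}(an) = Q_{d'}^{(1,-,-)}(n)$.

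There is essentially no hard step here, since the divisibility forces the scaling bijection to exist; the only thing one needs to be careful about is the edge case $a = (d+3)/2$, i.e.\ $c = 2$, where the two residue classes $\pm a \pmod{d+3}$ coincide and the generating function for $Q_d^{(a,-)}$ has the alternate form from the excerpt. In that case $d' = -1$ and the identities still go through directly from the generating-function identity
\begin{equation*}
\sum_{n\geq 0} Q_{d'}^{(1,-)}(n)\,q^{an} \;=\; \frac{1}{(q^{a(2c-1)};q^{ac})_\infty (q^{a};q^{ac})_\infty} \;=\; \sum_{n\geq 0} Q_d^{(a,-)}(n)\,q^{n},
\end{equation*}
obtained by substituting $q \mapsto q^{a}$ in the generating function for $Q_{d'}^{(1,-)}(n)$, and analogously for the doubly-dashed version. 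Comparing the coefficient of $q^{an}$ on both sides delivers the two stated identities.
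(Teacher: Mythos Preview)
Your proof is correct and is exactly the approach one would expect. The paper does not actually prove this lemma---it is quoted from Duncan et al.\ \cite{ourpaper}---but the same ``divide/multiply every part by $a$'' bijection you use is precisely what the present paper invokes in the proofs of Theorems \ref{thm:genkp unc} and \ref{thm:genkp con} when it says ``the equality follows from the natural bijection of multiplying the parts by $a$.'' So your argument matches the intended one.

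One small comment on the edge case: when $c=2$ you have $d'=-1$, so strictly speaking $Q_{d'}^{(1,-)}$ falls outside the range $d\geq 1$ in which the partition functions were introduced; your generating-function check handles this correctly, but you could simply note that in every application of the lemma in this paper one has $\lceil d/a\rceil$ large, so $d'=(d+3)/a-3$ is comfortably positive and the special case never arises.
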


We now present a combined result of Andrews  \cite{andrews_partition_1971}, which will be employed in the proofs of Theorems \ref{thm:a=3} and \ref{thm:strengthen unc}.

\begin{theorem}[Andrews \cite{andrews_partition_1971}, 1971]\label{thm:andrews71}
For $d = 2^{r} - 1, r \geq 4$, and $ n \geq 1$, 
\[
\q{d}{1}{n} \geq  \rho(T_{r, d}\mbox{;} n). 
\]
\end{theorem}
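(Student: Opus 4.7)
The plan is to follow the structure of Andrews' classical 1971 argument, which is the source of this theorem. First, I would write both sides as $q$-series. The left side has the familiar Euler-type expansion
\[
\sum_{n \geq 0} \q{d}{1}{n}\,q^n \;=\; \sum_{k \geq 0} \frac{q^{d\binom{k}{2}+k}}{(q;q)_k},
\]
obtained from the bijection on $d$-distinct partitions that subtracts $(k-i)d$ from the $i$-th part. The right side has generating function
\[
\sum_{n \geq 0} \rho(T_{r,d};\,n)\,q^n \;=\; \prod_{j=0}^{r-1} \frac{1}{(q^{e_j};\,q^{2d})_{\infty}},
\]
where $e_0 = 1$ and $e_j = d + 2^j$ for $1 \leq j \leq r-1$. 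The point of the hypothesis $d = 2^r - 1$ is that the residues in $T_{r,d}$ close up dyadically: one has $2 e_j \equiv 2^{j+1} \pmod{2d}$, and the top index $j = r-1$ interacts with $d+1 = 2^r$ so as to recycle back into $T_{r,d}$.

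Next, I would try to establish the chain
\[
\q{d}{1}{n} \;\geq\; \rho(T_{r,d};\,n) \;\geq\; \rho(T_{r-1,d};\,n) \;\geq\; \cdots,
\]
where each weakening step replaces the largest residue of $T_{r',d}$ by a smaller element already present and reduces to a direct application of Theorem \ref{thm:andrews ST}. The difficult step is the first inequality $\q{d}{1}{n} \geq \rho(T_{r,d};\,n)$. For this, I would construct an injection $\varphi$ from partitions counted by $\rho(T_{r,d};\,n)$ into partitions counted by $\q{d}{1}{n}$ via iterated \emph{dyadic merging}: whenever two parts fall into the same residue class $e_j \pmod{2d}$, the pair is combined into a single larger part whose residue shifts according to $2 e_j \equiv 2^{j+1} \pmod{2d}$, with the case $j = r-1$ wrapping around to recover a part congruent to $e_0 = 1$, and the process is iterated until every residue class is represented at most once.

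The main obstacle is that distinct elements of $T_{r,d}$ do not automatically differ by at least $d$---for instance, when $d = 15$ the residues $17$ and $19$ in $T_{4,15}$ are only two apart---so the injection cannot merely enforce distinctness; it must simultaneously resolve near-coincidences by further merging, while keeping a bijective accounting of which residue classes contribute which multiplicities. Verifying that $\varphi$ is injective with image in the $d$-distinct partitions of $n$ is the technical heart of Andrews' argument. Since this is a classical result, rather than reproduce this combinatorial bookkeeping in full, we invoke \cite[Theorem 3]{andrews_partition_1971} directly in what follows.
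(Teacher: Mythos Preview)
Your proposal and the paper's proof both ultimately defer to Andrews' 1971 paper rather than reprove the result, so in that sense the approaches agree. However, there is a concrete citation error: you conclude by invoking \cite[Theorem~3]{andrews_partition_1971}, but Theorem~3 of Andrews is precisely the comparison principle already recorded here as Theorem~\ref{thm:andrews ST}, and it says nothing about $q_d^{(1)}(n)$ at all---it only compares two $\rho$-functions. The paper instead points to the \emph{proofs} of Andrews' Theorems~1 and~4, whose combination yields the intermediate inequality $q_d^{(1)}(n) \geq \rho(T_{r,d};n)$.

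A second, smaller point: the chain $\rho(T_{r,d};n) \geq \rho(T_{r-1,d};n) \geq \cdots$ that you set up is not needed for this statement---only the first inequality $q_d^{(1)}(n) \geq \rho(T_{r,d};n)$ is at stake. Also, your sketch of the merging map (sending two parts in residue class $e_j$ to a single part in class $2e_j \bmod 2d$) does not land back in $T_{r,d}$ as written: for $j \geq 1$ one has $2e_j \equiv 2^{j+1} \pmod{2d}$, which is not one of the residues $1, d+2, \ldots, d+2^{r-1}$. Andrews' actual argument is more algebraic, proceeding through $q$-series manipulations rather than an explicit merging bijection of this form. Since you ultimately appeal to the reference anyway, this does not break the proof, but the sketched mechanism should not be presented as the content of Andrews' argument.
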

\begin{proof}[Proof.]
The result follows by combining the proofs of \cite[Theorems 1 and 4]{andrews_partition_1971}. 
\end{proof}
We conclude this section with a result from Yee \cite{yee_alders_2008} which will be used in our modification of Alder's conjecture.

\begin{lemma}[Yee \cite{yee_alders_2008}, 2008]\label{lem:Ye08}
    For $d \geq 31, r \geq 1, d \neq 2^{r} -1$, and $n \geq 4d + 2^{r_{d}}$, \[\q{d}{1}{n} \geq \mathcal{G}_{d}^{(1)}(n) .\]
\end{lemma}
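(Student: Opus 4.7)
The plan is to construct an injection $\varphi$ from partitions counted by $\mathcal{G}_{d}^{(1)}(n)$ into partitions counted by $\q{d}{1}{n}$, refining Andrews' approach in his proof of Theorem \ref{thm:andrews71}. Given a partition $\lambda$ of type $\mathcal{G}_{d}^{(1)}$, I decompose $\lambda = \mu \cup \nu$, where $\mu$ collects the (possibly repeated) parts lying in the residue classes of $T_{r_d-1,d}$ and $\nu$ collects the distinct parts in the single extra residue class $d + 2^{r_d-1} \pmod{2d}$. I then treat $\mu$ and $\nu$ separately and merge the resulting pieces at the end.

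For $\mu$, I would mimic Andrews' binary construction: the multiplicities of $\mu$ in each of the residue classes $1, d+2, \ldots, d+2^{r_d-2} \pmod{2d}$ can be rewritten via binary expansions to build a partition with consecutive differences at least $d$. Since $d \geq 31$ forces $r_d \geq 5$, and since $d \neq 2^{r}-1$ forces $d > 2^{r_d}-1$, every minimal gap produced by Andrews' procedure is automatically at least $d$; this yields a partition $\mu^{\ast}$ of $|\mu|$ with differences $\geq d$. A term-by-term comparison of the sequences $T_{r_d-1,d}$ and $T_{r_d,2^{r_d}-1}$ in the spirit of Theorem \ref{thm:andrews ST} can be used to make this reduction rigorous.

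For $\nu$, the parts are distinct and in arithmetic progression with common difference $2d$, so they already satisfy the gap condition among themselves. The remaining task is to interleave $\nu$ into $\mu^{\ast}$ without creating differences smaller than $d$. I would shift each part of $\mu^{\ast}$ upward by $2d$ times the number of elements of $\nu$ lying below it; the resulting multiset together with $\nu$ forms a partition $\varphi(\lambda)$ of $n$ whose consecutive differences are at least $d$. The hypothesis $n \geq 4d + 2^{r_d}$ is exactly the head-room required so that these shifts neither push the largest part past the total budget nor create collisions near the seam between shifted $\mu^{\ast}$ parts and $\nu$ parts. Injectivity of $\varphi$ follows by reading off $\nu$ from $\varphi(\lambda)$ as the parts congruent to $d + 2^{r_d-1} \pmod{2d}$ and then inverting both the shift step and Andrews' procedure.

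The main obstacle is the quantitative bookkeeping in the merging step. Verifying that the bound $n \geq 4d + 2^{r_d}$ is simultaneously sufficient to (i) rule out collisions between shifted parts of $\mu^{\ast}$ and parts of $\nu$, (ii) preserve the difference-$\geq d$ condition across the seam, and (iii) make the procedure reversible is where the real technical work lies; without this numerical margin, parts of $\nu$ can collide with the largest parts of $\mu^{\ast}$ after shifting and the inequality fails.
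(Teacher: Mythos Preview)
The paper's own proof is a one-line citation: it simply says to combine Lemmas 2.2 and 2.7 of Yee \cite{yee_alders_2008}. So there is no argument in the paper to compare your sketch against; what you have written is an attempt to reconstruct Yee's injections from scratch. Unfortunately the sketch, as it stands, does not go through.

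The fatal gap is in your merging step. You propose to shift each part of $\mu^{\ast}$ upward by $2d$ times the number of $\nu$-parts below it, and then take the union with $\nu$. But $|\mu^{\ast}| = |\mu|$ and $|\mu| + |\nu| = n$, so after your shifts the total becomes
\[
n + 2d \sum_{p \in \mu^{\ast}} \#\{q \in \nu : q < p\},
\]
which strictly exceeds $n$ whenever any $\nu$-part lies below any $\mu^{\ast}$-part. Your map therefore does not land in the partitions of $n$ counted by $q_{d}^{(1)}(n)$, and no amount of ``head-room'' from the hypothesis $n \geq 4d + 2^{r_d}$ repairs this: that hypothesis bounds $n$ from below, not the size of the output from above. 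Yee's actual injection does not work by naively interleaving two independently-built pieces; the distinct parts in the extra residue class are woven into the binary construction itself in a way that keeps the sum fixed, and this is precisely where the lower bound on $n$ enters.

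Two smaller issues compound this. First, your recovery of $\nu$ for injectivity assumes that, after Andrews' procedure and your shifts, the $\mu^{\ast}$-parts avoid the residue class $d + 2^{r_d-1} \pmod{2d}$; nothing in the construction forces that. Second, the statement ``$d > 2^{r_d}-1$ means every minimal gap produced by Andrews' procedure is automatically at least $d$'' is backwards: Andrews' method applied to $T_{r_d-1,d}$ produces gaps tied to the residue spacing, not to $d$ itself, and making those gaps come out $\geq d$ for general $d$ is exactly the content of Yee's Lemma~2.2 that you would need to re-prove.
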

\begin{proof}[Proof.]
 Combine both \cite[Lemmas 2.2 and 2.7]{yee_alders_2008} to obtain the result. 
\end{proof}

\section{A modification of Alder's conjecture}\label{sect:modalders}

In this section, we use the work of Andrews \cite{andrews_partition_1971} and Yee \cite{yee_alders_2008} to prove a modification of Alder's conjecture. We  use this modification to give simple proofs of Theorems \ref{thm:a=3} and \ref{thm:strengthen unc}.  \begin{proposition}\label{prop:mod alders} 
For $d = 31$ or $d \geq 63$ and $n\geq  d+2$,
\[
q_{d}^{(1)}(n) \geq Q_{d-3}^{(1,-)}(n).
\]
\end{proposition}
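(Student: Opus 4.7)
The plan is to sandwich $q_d^{(1)}(n)$ against $Q_{d-3}^{(1,-)}(n)$ through one of the classical Alder-type auxiliary partition functions already in the paper: establish $q_d^{(1)}(n) \geq \mathcal{A}_d(n) \geq Q_{d-3}^{(1,-)}(n)$, where $\mathcal{A}_d = \rho(T_{r,d}; \cdot)$ when $d = 2^r - 1$ and $\mathcal{A}_d = \mathcal{G}_d^{(1)}$ otherwise. The first inequality is the content of Andrews' and Yee's prior work; only the second is new, and it is where the modification appears.

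I would split into two cases. For $d = 31$ or $d = 2^r - 1$ with $r \geq 6$ (i.e., $d \in \{63, 127, 255, \ldots\}$), Theorem \ref{thm:andrews71} gives $q_d^{(1)}(n) \geq \rho(T_{r,d}; n)$ with no range restriction on $n$. For $d \geq 63$ with $d \neq 2^r - 1$, Lemma \ref{lem:Ye08} gives $q_d^{(1)}(n) \geq \mathcal{G}_d^{(1)}(n)$ whenever $n \geq 4d + 2^{r_d}$; the residual window $d+2 \leq n < 4d + 2^{r_d}$ should be dispatched by direct enumeration, since only a bounded menu of parts from $\{1, d+1, 2d-1, 2d+1, 3d-1, 3d+1, \ldots\}$ can appear in $Q_{d-3}^{(1,-)}$-partitions of such $n$, while $q_d^{(1)}(n) \geq 1 + \lfloor (n-d)/2 \rfloor$ grows linearly in $n$.

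The heart of the argument is establishing $\mathcal{A}_d(n) \geq Q_{d-3}^{(1,-)}(n)$. Theorem \ref{thm:andrews ST} cannot be invoked directly: the sorted parts of $Q_{d-3}^{(1,-)}$ begin $1, d+1, 2d-1, \ldots$, while the sorted parts of both $T_{r,d}$ and (the support of) $\mathcal{G}_d^{(1)}$ begin $1, d+2, d+4, \ldots$, so the hypothesis $x_i \geq y_i$ already fails at $i = 2$. Instead, I would construct an explicit injection $\varphi$ from $Q_{d-3}^{(1,-)}$-partitions to $\mathcal{A}_d$-partitions: each $(d+1)$-part is replaced by a $(d+2)$-part, with the deficit of $1$ absorbed by decrementing the multiplicity of the unit part; each $(2d-1)$-part is rewritten as a legal combination in the auxiliary support (for instance, $(d+2) + (d-3) \cdot 1$); and the remaining parts $2d+1, 3d\pm 1, 3d+1, \ldots$ are rewritten using the richer residues $\{d+2, d+4, d+8, \ldots, d+2^{r-1}\}$ that are available in $T_{r,d}$ and $\mathcal{G}_d^{(1)}$ but missing from $Q_{d-3}^{(1,-)}$'s support. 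Injectivity would be tracked by a sum-of-differences invariant in the spirit of the quantity $\alpha(\lambda)$ in the proof of Lemma \ref{lem:mod andrews}.

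The main obstacle is a shortage of unit parts: if a $Q_{d-3}^{(1,-)}$-partition has many copies of $d+1$ but too few $1$'s, then the naive substitution $d+1 \mapsto d+2$ runs out of unit parts to absorb the shift. In that regime, I would aggregate pairs of $(d+1)$-parts into a single summand of value $2d+2$ and reroute it through the residue $2d+1$ (which sits in both $T_{r,d}$ and the $B$-part of $\mathcal{G}_d^{(1)}$) together with a unit, or into $d + 2^i$ plus $d + 2 - 2^i$ for a suitable $i \leq r_d - 2$. Proving injectivity simultaneously across these substitution branches, while respecting the distinct-parts constraint on the $A$-residue $d + 2^{r_d - 1}$ in $\mathcal{G}_d^{(1)}$, is where the proof requires the greatest care; the hypothesis $d \geq 63$ (which forces $r_d \geq 6$) is what guarantees enough residue classes in the auxiliary support to make the injection go through.
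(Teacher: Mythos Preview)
Your proposal is essentially the paper's approach: split into a small-$n$ window handled by direct enumeration (the paper's Lemma \ref{lem:small m k-3} covers $d+2 \leq n \leq 5d$) and a large-$n$ range handled by an explicit injection into an Andrews--Yee auxiliary function (the paper's Lemma \ref{lem:large m, not power}); you have correctly pinpointed both the $i=2$ obstruction and the fix of pairing two $(d{+}1)$-parts into a $(2d{+}1)$-part plus a unit, which is exactly the paper's Case~II map $\varphi_{(2,\beta)}$.

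One simplification in the paper worth noting: rather than compare $Q_{d-3}^{(1,-)}$ directly against $\mathcal{G}_d^{(1)}$ and wrestle with the distinct-parts constraint on the residue $d+2^{r_d-1}$, the paper first passes from $\mathcal{G}_d^{(1)}(n)$ down to $\rho(T_{5,d};n)$ via $\mathcal{G}_d^{(1)}(n)\geq \rho(T_{r_d-1,d};n)\geq \rho(T_{5,d};n)$ (valid since $r_d\geq 6$), so that the injection is built into a target with no distinctness restriction and only five residue classes. This unifies the $d=2^s-1$ and $d\neq 2^s-1$ cases into a single injection and removes the concern you flagged in your last paragraph.
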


We prove Proposition \ref{prop:mod alders} in two cases based on the form and size of $d$ and $n$. We will use throughout the paper the notation $\lambda^{x}$ to denote that $\lambda$ appears as a part $x$ times in a partition.

\begin{lemma}\label{lem:small m k-3}
For $ d+2 \leq n \leq 5d$ and $d \geq 31$, 
\[
 \q{d}{1}{n} \geq \Qdash{d-3}{1}{n} .
\]
\end{lemma}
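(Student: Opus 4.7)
My plan is a direct enumeration argument comparing explicit formulas for both sides. Since a four-part partition of $n \le 5d$ with consecutive gaps $\ge d$ has sum at least $4 + 6d > 5d$, any partition counted by $q_d^{(1)}(n)$ has at most three parts. Writing a $k$-part partition as $\lambda_i = \mu_i + (k-i)d$ with $\mu$ a partition of $n - d\binom{k}{2}$ into exactly $k$ positive parts, straightforward enumeration yields
\[
q_d^{(1)}(n) \;=\; 1 + \left\lfloor \tfrac{n-d}{2} \right\rfloor + \pi_3(n - 3d),
\]
where $\pi_3(m)$ denotes the number of partitions of $m$ into exactly three positive parts (and vanishes for $m < 3$).

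For the upper bound on $Q_{d-3}^{(1,-)}(n)$, I observe that every non-1 part lies in the set $S' = \{d+1, 2d-1, 2d+1, 3d-1, 3d+1, 4d-1, 4d+1, 5d-1\}$ and is at least $d+1$. Writing $\mu \in Q_{d-3}^{(1,-)}(n)$ as $\mu = (p_1, \dots, p_M, 1^{x_0})$, the constraint $M(d+1) \le n \le 5d$ forces $M \le 4$. Classifying $\mu$ by $M$ and enumerating the possible multi-sets of non-1 parts from $S'$ gives a uniform bound $Q_{d-3}^{(1,-)}(n) \le 26$ throughout the range $[d+2, 5d]$; this is a sum of contributions $1 + |S' \cap [d+1, n]| + (\text{pairs}) + (\text{triples}) + (\text{quadruples})$, each of which is finite and easy to count.

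I then compare the two sides in three sub-ranges. For $n \in [d+2, 2d-2]$, the parts available in $S'$ are restricted to $\{d+1\}$, so $Q_{d-3}^{(1,-)}(n) = 2$ while $q_d^{(1)}(n) = 1 + \lfloor (n-d)/2 \rfloor \ge 2$, with equality precisely at $n \in \{d+2, d+3\}$. For $n \in [2d-1, 3d+2]$, the enumeration gives $Q_{d-3}^{(1,-)}(n) \le 9$ while $q_d^{(1)}(n) \ge 1 + \lfloor (d-1)/2 \rfloor \ge 16$ whenever $d \ge 31$. For $n \in [3d+3, 5d]$, the three-part contribution $\pi_3(n-3d)$ makes $q_d^{(1)}(n)$ grow faster than linearly, while $Q_{d-3}^{(1,-)}(n) \le 26$ remains bounded; already at $n = 3d+3$ we have $q_d^{(1)}(n) \ge 1 + \lfloor (2d+3)/2 \rfloor + 1 \ge d + 3 \ge 34$.

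The main obstacle is the sharp boundary cases $n \in \{d+2, d+3\}$, where both sides equal $2$ and the inequality is tight, so the enumeration leaves no slack. These are verified by direct inspection: at $n = d+2$, both sides consist of exactly two partitions, namely $\{(1^{d+2}), (d+1, 1)\}$ for $Q_{d-3}^{(1,-)}(n)$ and $\{(d+2), (d+1, 1)\}$ for $q_d^{(1)}(n)$; at $n = d+3$ the analogous check gives $\{(1^{d+3}), (d+1, 1^2)\}$ and $\{(d+3), (d+2, 1)\}$. Everywhere else in $[d+2, 5d]$ the inequality is loose by the explicit bounds above, completing the argument.
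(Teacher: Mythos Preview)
Your proof is correct and follows essentially the same approach as the paper: both arguments split $[d+2,5d]$ into sub-intervals, exploit monotonicity, and bound each side by direct enumeration of the finitely many partition shapes available. Your closed-form expression $q_d^{(1)}(n) = 1 + \lfloor (n-d)/2 \rfloor + \pi_3(n-3d)$ is a slightly cleaner way to lower-bound the left side than the paper's explicit listing of two-part partitions, but the underlying idea and the interval-by-interval comparison are the same.
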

\begin{proof}[Proof.]
 We define 
\[
S_{d} = \{x \in \mathbb{N} \mid x \equiv \pm 1 \pmod{d}\}\setminus{\{d-1\}}
\]
so that $\rho(S_{d}\mbox{;}n) = \Qdash{d -3}{1}{n}$. We prove Lemma \ref{lem:small m k-3} based on relating the size of the parts in $S_{d}$ and the size of $n$. 

We note that $q_{d}^{(1)}(n)$ is a weakly increasing function of $n$ since for any partition of $n$ counted by $q_{d}^{(1)}(n)$, we can add $1$ to its largest part to create a partition of $n +1$ counted by $q_{d}^{(1)}(n+1)$. In a similar fashion, $Q_{d-3}^{(1,-)}(n)$ is weakly increasing since for any partition of $n$ counted by $Q_{d-3}^{(1,-)}(n)$, we can adjoin $1$ as a part to create a partition of $n+1$ counted by $Q_{d-3}^{(1,-)}(n+1)$.

 Notice that for $d + 2 \leq n \leq 2d -2$ that we have $\q{d}{1}{d+2} = 2$ with partitions $(d +1, 1), (d +2)$. Note $\Qdash{d-3}{1}{2d - 2} = 2$ with partitions $( d+1, 1^{d-3}), ( 1^{2d -2}) $, hence the inequality holds.

We now consider the interval $2d - 1\leq n \leq 4d-1$. We notice that  $\q{d}{1}{2d -1} \geq 16$ since the partitions $(2d -1 ), ( 2d -1 - i, i)$ with $1 \leq i \leq 15$ are counted by $\q{d}{1}{2d-1}$ due to $d \geq 31$. Note that at $ n = 4d-1$,  we have $\Qdash{d-3}{1}{4d-1} = 12 $ partitions since there is one partition with largest part for each element in $\{4d-1 $, $3d+1, 3d-1\}$, two with largest part $2d+1$, three with largest part for each element in $\{2d -1, d +1 \}$, and one with largest part $1$. Hence for all $2d -1 \leq n \leq 4d - 1$ the inequality holds. 

We now verify for all $4d \leq n \leq 5d$ that the inequality holds. Notice that we have the lower bound $\q{d}{1}{4d} \geq 47$ since the partitions $(4d), (4d-i, i)$ with $1 \leq i \leq 46$ are counted by $\q{d}{1}{4d}$ due to $d \geq 31$. We observe that $\Qdash{d-3}{1}{5d} = 26$ since there is one partition with largest part for each element in $\{5d-1, 4d+1\}$, two partitions with largest part  $4d-1$, three partitions with largest part $3d+1$, four partitions with largest part $3d -1$, five partitions with largest part for each element in $\{2d+1, 2d-1\}$, four partitions with largest part $d+1$, and one partition with largest part $1$. Hence, we obtain $\q{d}{1}{n} \geq \Qdash{d-3}{1}{n}$ for $4d \leq n \leq 5d$. 

\end{proof}

\begin{lemma}\label{lem:large m, not power}
For $d = 31$ or $d \geq 63$  and $n \geq 4d + 2^{r_d}$, 
\[
q_{d}^{(1)}(n) \geq \Qdash{d-3}{1}{n}. 
\]
\end{lemma}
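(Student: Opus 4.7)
The plan is to combine the lower bounds on $q_d^{(1)}(n)$ from Lemma~\ref{lem:Ye08} (Yee) and Theorem~\ref{thm:andrews71} (Andrews) with a careful application of Lemma~\ref{lem:mod andrews}. I would split into two cases based on whether $d$ has the form $2^r - 1$.

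If $d = 2^r - 1$ for some $r \geq 5$, so that $d \in \{31, 63, 127, \ldots\}$, then Theorem~\ref{thm:andrews71} yields $q_d^{(1)}(n) \geq \rho(T_{r_d, d}; n)$ for all $n \geq 1$. Otherwise $d \geq 63$ and $d \neq 2^r - 1$, and since $n \geq 4d + 2^{r_d}$, Lemma~\ref{lem:Ye08} gives $q_d^{(1)}(n) \geq \mathcal{G}_d^{(1)}(n)$. In both situations it suffices to show that the dominating function majorizes $Q_{d-3}^{(1,-)}(n) = \rho(S_d; n)$, where
\[
S_d := \{x \in \N : x \equiv \pm 1 \pmod{d}\} \setminus \{d-1\} = \{1,\ d+1,\ 2d-1,\ 2d+1,\ 3d-1,\ 3d+1,\ \ldots\}.
\]

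I would next attempt to apply Lemma~\ref{lem:mod andrews} with $a=1$, comparing $T_{r_d,d}$ against $S_d$ term-by-term. Their $i$-th parts are
\[
T_{r_d,d}\colon\ 1,\ d+2,\ d+4,\ d+8,\ \ldots,\ d+2^{r_d-1},\ 2d+1,\ 3d+2,\ \ldots
\]
\[
S_d\colon\ 1,\ d+1,\ 2d-1,\ 2d+1,\ 3d-1,\ 3d+1,\ \ldots
\]
For all $d$ in the hypothesized range, a direct calculation shows that the $i$-th entry of $S_d$ dominates the $i$-th entry of $T_{r_d, d}$ for $i = 1$ and all $i \geq 3$, while the only failure is at $i = 2$, where $d+1 < d+2$.

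The main obstacle, and the place where I expect the proof to be most delicate, is removing this index-$2$ discrepancy. My approach would be to preprocess an $S_d$-partition $\lambda$ of $n$ before invoking Lemma~\ref{lem:mod andrews}: each part $d+1$ of $\lambda$ can be absorbed by pairing with a $1$ via $(d+1)+1 = d+2$, or with a $2d+1$ via $(d+1)+(2d+1)=3d+2$, both of which are parts in $T_{r_d-1,d}$. Any leftover $d+1$'s can be shifted into the distinct parts from $U = \{d+2^{r_d-1},\ 3d+2^{r_d-1},\ \ldots\}$ in the $\mathcal{G}$ case, or into larger $T$-parts in the Andrews case, by borrowing mass from parts $\geq 2d-1$. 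The hypothesis $n \geq 4d + 2^{r_d}$ is exactly the threshold needed to guarantee enough low-order mass for these adjustments to be well-defined. After preprocessing, the resulting partition has no $d+1$ part, and Lemma~\ref{lem:mod andrews} then applies to the tails $S_d \setminus \{d+1\}$ and $T_{r_d,d} \setminus \{d+2\}$ to complete the comparison.
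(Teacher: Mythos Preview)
Your overall architecture matches the paper's: split on whether $d=2^s-1$, invoke Andrews or Yee, and then compare the resulting $T$-set against $S_d$ term by term, dealing separately with the single failure at index $i=2$. One small correction: in the Yee case $\mathcal{G}_d^{(1)}(n)$ only dominates $\rho(T_{r_d-1,d};n)$, not $\rho(T_{r_d,d};n)$, since the top congruence class carries a distinctness restriction. The paper handles this by passing uniformly to $\rho(T_{5,d};n)$ (valid since $r_d\geq 6$), and then compares $S_d$ against $T_{5,d}$ once and for all.

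The real gap is your ``preprocessing'' for the $d+1$ parts. Consider $\lambda = ((d+1)^k)$ with $n=k(d+1)$; for $d\geq 63$ and $k\geq 5$ this already satisfies $n\geq 4d+2^{r_d}$. There are no $1$'s, no $2d+1$'s, and no parts $\geq 2d-1$ at all, so none of your pairing or borrowing moves are available. The threshold $n\geq 4d+2^{r_d}$ is used only to invoke Yee's lemma; it does not supply the ``low-order mass'' you need, and the injection must work for partitions like $((d+1)^k)$ unconditionally. Even when pairing is possible, you have not indicated how to make the map injective: after replacing some $(d+1)$'s by $(d+2)$'s via merged $1$'s and others by $(2d+1)$'s, one cannot in general recover the original multiplicities.

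The paper's fix is an explicit two-case injection into $\rho(T_{5,d};n)$. Writing $p_i$ for the multiplicity of the $i$-th $S_d$-part and $\alpha=\sum_{i\neq 2}p_i(x_i-y_i)$, Case~I ($p_1+\alpha\geq p_2$) uses the standard Andrews-type shift. Case~II ($p_1+\alpha<p_2$) exploits the identity $2(d+1)=(2d+1)+1$: pairs of $(d+1)$'s are sent to a $2d+1$ plus a $1$, with a parity bit $\epsilon$ and an auxiliary parameter $\beta$ (recording $p_1+p_6 \bmod (d-2)$) encoded into $q_2$ to guarantee injectivity and disjointness of images. This is exactly the mechanism your sketch is missing.
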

\begin{proof}[Proof.]
We prove Lemma \ref{lem:large m, not power} by showing the following inequalities,
\[
q_{d}^{(1)}(n) \geq \mathcal{G}_{d}^{(1)}(n) \geq \Qdash{d-3}{1}{n}. 
\]
In the case when $d \neq 2^{s} -1$, recall that from Lemma \ref{lem:Ye08} that $\q{d}{1}{n} \geq \mathcal{G}_{d}^{(1)}(n)$. From (\ref{eq:G function}), $\mathcal{G}_{d}^{(1)}(n) \geq \rho(T_{r_{d}-1, d}\mbox{;}n) $.  Since $r_{d} \geq 6$, we have $\rho(T_{r_{d}-1,d}\mbox{;} n) \geq \rho(T_{5, d}\mbox{;}n)$ from Theorem \ref{thm:andrews ST}. 

In the case when $d = 2^{s} -1$ with $s \geq 5$, from Theorem \ref{thm:andrews71}, we have $q_{d}^{(1)}(n) \geq \rho(T_{5,d}\mbox{;} n)$. Hence, in both cases it suffices to show $\rho(T_{5, d}\mbox{;}n) \geq \Qdash{d-3}{1}{n}$ 

Let $S$ and $T$ denote the sets of partitions counted by $\Qdash{d-3}{1}{n}$ and $\rho(T_{5,d}\mbox{;}n)$ respectively. 
We set $x_{i} \in S_{d}$ and $y_{i} \in T_{5,d}$ to denote the associated $ith$ smallest element of $S_d$ and $T_{5, d}$. Using Table \ref{tab:table 1}, note that the only $i$ where $x_{i} < y_{i}$ is when $i = 2$.


 We will construct an injection $\varphi: S \to T$. Let $\lambda \vdash n$ be an element in $S$ and $p_{i}$ and $q_{i}$ denote the number of times $x_{i}$ (wrt $y_{i}$) occurs as a part of $\lambda $ (wrt $\varphi(\lambda)$). Set
\[
\alpha:= \sum_{i \neq 2}p_{i}(x_{i} - y_{i})
\]
to be the difference sum.

Let $S_{1}$ denote the subset of $S$ where the partitions satisfy the constraint $p_{1} + \alpha \geq p_{2}$. Note that if $\lambda \vdash n$ has $p_{2} = 1$ that $\lambda \in S_{1}$ since $n \geq d +2$. We define the function $\varphi_{1} : S_{1} \to T$ as follows:

\textbf{I}: $p_{1} + \alpha \geq p_{2}$. We set
$$q_i = \begin{cases}
-p_2 + p_1 + \alpha, i = 1\\
p_i, i \geq 2\\
\end{cases}$$
\\
\begin{table}[]
\begin{center}
\caption{Values of $x_i \in S_d, y_i \in T_{5, d}$ for $i = 10\alpha + \bar{i}$ where $\bar{i}$ and $\alpha$ are respectively the remainder and quotient from Euclidean division of $i$ by $10$. \label{tab:table 1}}%
\begin{tabular}{l|l|l}
\(i\)                                                                                                     & \(x_i \)                               & \(y_i \)                           \\
\hline
\(1\)  & \(1\)   & \( 1 \) \\ \hline
 \(10\alpha + 1: \mbox{ ($i \neq 1$})\)  & \(\left(5\alpha+ 1\right)d - 1\)   & \( 4d\alpha + 1 \)  \\ \hline
 \(10\alpha + 2\) & \(\left(5\alpha+ 1\right)d + 1 \) &  \( 4d\alpha + d + 2\) \\ \hline
 \(10\alpha + 3\) & \(\left(5\alpha+2\right)d - 1\) & \(4d \alpha + d + 4\) \\ \hline
  \(10\alpha + 4\) & \(\left(5\alpha+2\right)d + 1 \) &  \( 4d\alpha + d + 8\) \\ \hline
 \(10\alpha+5\) & \(\left(5\alpha+3\right)d - 1\) & \(4d \alpha +d + 16\)
  \\ \hline \(10\alpha + 6\)  & \(\left(5\alpha+3\right)d + 1\)   & \( 4d\alpha + 2d + 1 \)  \\ \hline \(10\alpha + 7\)  & \(\left(5\alpha+4\right)d - 1\)   & \( 4d\alpha + 3d + 2 \)  \\ \hline \(10\alpha + 8\)  & \(\left(5\alpha+4\right)d + 1\)   & \( 4d\alpha + 3d + 4 \) \\ \hline \(10\alpha + 9\)  & \(\left(5\alpha+5\right)d - 1\)   & \( 4d\alpha + 3d + 8\) \\ \hline \(10\alpha: \alpha > 0 \)  & \(\left(5\alpha\right)d + 1\)   & \( 4d\alpha - d + 16\)  \\ \hline
 \end{tabular}
 \end{center}
 \end{table}
Observed that if $ \lambda \in S_{1}$ that we have $\varphi_{1}(\lambda) \vdash n$ since 
\begin{align*}
\sum_{i \geq 1}q_{i}y_{i} &= -p_{2} + p_{1} + \alpha + p_{2}(d+2) + \sum_{i \geq 3}p_{i}y_{i} \\
&= p_{1} + p_{2}(d+1) + \sum_{i \geq 3}p_{i}x_{i}  = n.
\end{align*}
Hence $\varphi_{1}$ is well-defined.

Let $S_{2}$ denote the set of partitions of $S$ with $p_{2} > p_{1} + \alpha$. We define  $S_{(2, \beta)} \subset S_{2}$ to be the set of partitions which additionally satisfy  $p_{1} + p_{6} = \beta(d-2) + \bar{p}$ with $\bar{p} \in \{0,\cdots, d-3\}$ and $\beta \in \mathbb{Z}_{\geq 0}$. Observe by construction that $S_{2}$ is the disjoint union $\bigcup_{\beta \in \mathbb{Z}_{\geq 0}}S_{(2,\beta)}$.

We define for fixed $\beta  \geq 0$, 
\[
\epsilon = \epsilon(\lambda) = \begin{cases} 0\ \text{if } p_{2} \text{ is even} \\
1\ \text{if } p_{2} \text{ is odd}. 

\end{cases} 
\]
 For each $\beta$, we define the function $\varphi_{(2,\beta)}: S_{(2, \beta)} \to T$  as follows:

\textbf{II}: $p_{1} + \alpha < p_{2}$ and $\lambda \in S_{(2,\beta)}$. We set \[
q_{i} = \begin{cases}
\frac{p_{2}-2\beta -3\epsilon}{2} + \alpha + p_{1} - 2\beta, i =1 \\
2\beta + \epsilon, i = 2 \\
p_{i}, 3 \leq i \leq 5\\
\frac{p_{2} -2 \beta - \epsilon}{2} + p_{6}, i = 6\\
p_{i}, i \geq 7. 
\end{cases} 
\]
We now show that $\varphi_{2,\beta}$ is well-defined. If $\lambda \in S_{2,\beta}$, notice 
\begin{align*}
    \sum_{i \geq 1}q_{i}y_{i} &= \left(\frac{p_{2}-2\beta -3\epsilon}{2} + \alpha + p_{1} - 2\beta  \right) + (2\beta + \epsilon )(d+2) \\
    +&\left(\frac{p_{2} -2 \beta - \epsilon}{2} + p_{6}\right)(2d+1)+ \sum_{ i \neq 1,2,6}p_{i}y_{i}\\
    &= p_{1} + \frac{p_{2}-2\beta -3\epsilon}{2} -2\beta + (2\beta + \epsilon)(d+2)\\
    +& \left(\frac{p_{2} - 2\beta - \epsilon}{2}\right)(2d+1) + \sum_{i \geq 3}p_{i}x_{i}\\
    &=p_{1} + p_{2} - 2\beta -\epsilon + (2\beta + \epsilon)(d+1) +d(p_{2} - 2\beta - \epsilon) + \sum_{i \geq 3}p_{i}x_{i}\\
    &= p_{1} + p_{2}(d+1) + \sum_{i \geq 3}p_{i}x_{i} = n. 
   \end{align*}

Hence $\varphi_{2,\beta}(\lambda) \vdash n$. 

In the case when $\epsilon = 0$, note $p_{2} - 2\beta \geq 0$ since $ p_{2} -\frac{2p_{2}}{d-2} \geq 0$ if $d \geq 4$. Additionally, $\alpha + p_{1} - 2\beta \geq 0 $ if $d \geq 4$, thus $q_{1}, q_{6} \geq 0$.

In the case when $\epsilon = 1$, note if $p_{2} = 3$ we must have $\beta = 0$, since $p_2 > p_{1} + \alpha \geq p_{1} + p_{6} \geq \beta(d-2) $, $d \geq 31$, and $\lambda \in \varphi_{2,\beta}$. For $p_{2} \geq 5$, note $p_{2} \geq 3 + \frac{2p_{2}}{d-2}$ if $d \geq 7$, implying $p_2 - 3 - 2\beta \geq 0$. Observe from the definition of $\beta$ that $\alpha + p_{1} - 2\beta \geq 0 $ for $d \geq 4$. Thus $q_{1}, q_{6} \geq 0$.  Hence, we obtain that $\varphi_{2, \beta}$ is well defined.

We define $\varphi: S \to T$ to be the function defined piecewise from  $\varphi_{1}, \varphi_{(2, \beta)}$ as above. In order to show that $\varphi$ is injective, it suffices to show that $\varphi_{1}, \varphi_{(2,\beta)}$ are injective and that the images of distinct cases are disjoint. 

Injectivity of $\varphi_{1}$ follows in the same manner as the function $\varphi$ present in Lemma \ref{lem:mod andrews}.  We now show for fixed $\beta$ that $\varphi_{(2, \beta)}$ is injective on its domain $S_{(2, \beta)}$. Suppose that $\lambda, \lambda' \in S_{(2, \beta)}$ are partitions such that $\varphi_{(2, \beta)}(\lambda) = \varphi_{(2,\beta)}(\lambda')$. Let $p_{i}$ and $p_{i}'$ denote the multiplicity numbers of $x_{i}$ occurring as a part of $\lambda$ and $\lambda'$ respectively. Similarly, let  $\bar{p},\bar{p}' $ denote the remainders when $p_{1} + p_{6}, p_{1}' + p_{6}'$ are divided by $d-2$ respectively. 

Observe from the definition of $\varphi_{(2,\beta)}$ that we may immediately have $p_{i} = p_{i}'$ for $i \neq 1,2,6$. Similarly, we must have $\epsilon(\lambda) = \epsilon(\lambda')$ otherwise $q_{2}$ and $q_{2}'$ will have opposite parity. Hence, we will assume that $p_{i}$ for $i \neq 1,2,6$ and $\epsilon(\lambda)$, $\epsilon(\lambda')$ are zero.  We obtain from the definition of $\varphi_{(2, \beta)}$ and using that $\beta$ is fixed the following system of equations 
\begin{align*}
\frac{p_{2}}{2} + dp_{6} + p_{1}  &=\frac{p_{2}'}{2} + dp_{6}'+ p_{1}'\\
\frac{p_{2}}{2} + p_{6} &= \frac{p_{2}'}{2} + p_{6}'\\
p_{1} + (d-1)p_{6} &= p_{1}' + (d-1)p_{6}'. 
\end{align*} 

Assume without loss of generality that $p_{1} \geq p_{1}'$. Observe that since $\lambda, \lambda' \in S_{(2, \beta)}$, we have $(p_{1} - p_{1}') + (p_{6} - p_{6}') = \bar{p} - \bar{p}' < d-2$. We note that this yields $\mid \bar{p}- \bar{p}'\mid < d -2$ since $0 \leq \bar{p}, \bar{p}' < d-2$. Using this and the third equation above, we have 
\begin{equation}\label{eq:barp}
(p_{1} - p_{1}') = (\bar{p}-\bar{p}') + (p_{6}' - p_{6} ) = (d-1)(p_{6}' - p_{6}). 
\end{equation} 
From (\ref{eq:barp}), we have that $\bar{p} = \bar{p}'$ since $d-2 \nmid (\bar{p} - \bar{p}')$ if $\bar{p} - \bar{p}' \neq 0$. This implies that $(d-2)(p_{6}' - p_{6}) = 0$, which yields that $p_{6} = p_{6}'$ since $d \geq 31$. Via the three equations above, we obtain that $p_{1} = p_{1}'$ and $p_{2} = p_{2}'$, thus $\lambda = \lambda'$. Hence $\varphi_{2,\beta}$ is an injection.

We now show that images of the subfunctions forming $\varphi$ are disjoint. We observe from construction that if $\beta \neq \beta'$ that $\igm \varphi_{(2,\beta)} \cap \igm \varphi_{(2, \beta')} = \varnothing $. 
Hence it suffices to show $\igm \varphi_{1} \cap \igm \varphi_{(2, \beta)} = \varnothing$ for all $\beta \in \mathbb{Z}_{\geq 0}$. Let $\lambda \in S_{1}$, $\lambda' \in S_{(2,\beta)}$, and suppose that $\varphi_{1}( \lambda) = \varphi_{(2,\beta)}(\lambda')$. We can assume from the construction of the function $\varphi_{2,\beta}$ that $p_{i} = p_{i}' = 0$ for all $i \neq 1,2,6$.

 We observe that we obtain the system of equations 
\begin{align*}
p_{1} + d p_{6} - p_{2} &= p_{1}' + d p_{6}'+\frac{p_{2}' - 2\beta - 3\epsilon}{2} - 2 \beta \\
p_{2} &= 2\beta + \epsilon\\
p_{6} &= p_{6}' + \frac{p_{2}'- 2\beta - \epsilon}{2}. 
\end{align*}
Using these three equations and $p_{1}' < p_{2}'$, we obtain
\begin{equation}\label{eq:p_{1}-rep}
p_{1} = -d(p_{6} - p_{6}') + p_{1}' + \frac{p_{2}' -2\beta -3\epsilon}{2} + \epsilon < \frac{-d +1}{2}(p_{2}'- 2\beta - \epsilon)  + p_{2}' . 
\end{equation}
Note from (\ref{eq:p_{1}-rep}) that it suffices to show 
\begin{equation}\label{eq:reduced-inequality}
    \frac{-d+1}{2}(p_{2}' - 2\beta - \epsilon)+p_{2}' \leq 0. 
\end{equation}

From (\ref{eq:reduced-inequality}), $(d-1)\beta \leq \frac{d-1}{d-2}p_{2}'$ and $\epsilon \leq 1$, it suffices to show that $\frac{d-1}{2}(p_{2}' - 1) - \frac{d-1}{d-2}p_{2}' \geq p_{2}'. $ We notice that $p_{2}' - 1 \geq \frac{p_{2}'}{2}$ since $p_{2}' \geq 2$. Hence it suffices to show that 
\begin{align*}
  \frac{d-1}{4} \geq 1 + \frac{d-1}{d-2},
\end{align*}
which is true for $d \geq 10$. This yields that $p_{1} < 0$ which is a contradiction. 
\end{proof}

\begin{proof}[Proof of Proposition \ref{prop:mod alders}.]
Combine Lemmas \ref{lem:small m k-3} and \ref{lem:large m, not power} to obtain the result. 
\end{proof}

\section{Proofs of Theorems \ref{thm:a=3} and \ref{thm:strengthen unc}}\label{sect:a=3 and uncstrengthgenkp}

In this section, we provide proofs of Theorems \ref{thm:a=3} and \ref{thm:strengthen unc} using the lemmas established in Section \ref{sect:prelims} as well as Proposition \ref{prop:mod alders}.

\subsection{Proof of Theorem \ref{thm:a=3}}

We first prove Theorem \ref{thm:a=3} for $d = 1$ by employing the Glaisher bijection \cite{glaisher_bijection_1883}. 
\begin{proposition}\label{prop:a =3 d=1} For $n \geq 1$ and $d = 1$, 
\[
\Delta_{d}^{(3,-)}(n) \geq 0. 
\]
\end{proposition}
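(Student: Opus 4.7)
The plan is to invoke the classical Glaisher bijection between partitions into distinct parts and partitions into odd parts. First, I would unpack the definitions when $d=1$ and $a=3$. Since $d+3 = 4$, the residues $\pm 3 \pmod{4}$ are exactly $1$ and $3$, so ``parts $\equiv \pm 3 \pmod{d+3}$'' describes all odd positive integers. Removing the excluded part $d+3-a = 1$ leaves only the odd integers $\geq 3$, so $Q_{1}^{(3,-)}(n)$ counts partitions of $n$ into odd parts each at least $3$. On the other side, since the ``difference at least $1$'' condition just forces distinct parts, $q_{1}^{(3)}(n)$ counts partitions of $n$ into distinct parts each at least $3$.

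Next, I would recall that Glaisher's bijection sends a partition with distinct parts $(\lambda_1,\lambda_2,\ldots)$ to an odd-part partition by writing each $\lambda_i = 2^{k_i} m_i$ with $m_i$ odd and replacing $\lambda_i$ by $2^{k_i}$ copies of $m_i$. Under this map, the resulting odd partition contains $1$ as a part precisely when some $\lambda_i$ is a power of $2$. Restricting the bijection to partitions with no part equal to $1$ on the odd side, one obtains
\[
\{\text{partitions of } n \text{ into odd parts } \geq 3\}\;\longleftrightarrow\;\{\text{partitions of } n \text{ into distinct parts, none a power of } 2\}.
\]
In particular, $Q_{1}^{(3,-)}(n)$ equals the number of partitions of $n$ into distinct parts none of which is a power of $2$.

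Finally, I would note that any partition counted on the right-hand side above is automatically counted by $q_{1}^{(3)}(n)$: its parts are distinct by construction, and the only powers of $2$ that are less than $3$ are $1$ and $2$, so excluding powers of $2$ already forces every part to be at least $3$. This gives a containment of partition sets and hence the desired inequality $Q_{1}^{(3,-)}(n) \leq q_{1}^{(3)}(n)$, equivalently $\Delta_{1}^{(3,-)}(n) \geq 0$. The argument is essentially one short combinatorial observation, so no step stands out as a real obstacle; the only care required is in correctly identifying the image of the Glaisher map and verifying that the residue class $\pm 3 \pmod 4$ collapses to the odd integers.
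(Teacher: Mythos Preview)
Your proof is correct and is essentially the paper's argument viewed from the other side of the Glaisher bijection. The paper builds an explicit injection from partitions of $n$ into odd parts $\geq 3$ into partitions of $n$ into distinct parts $\geq 3$ by writing each multiplicity $p_i$ in binary and replacing the $p_i$ copies of the odd part $\lambda_i$ by the distinct parts $2^{a_1(i)}\lambda_i,\dots,2^{a_j(i)}\lambda_i$; the image of this map is exactly the set of distinct-part partitions with no power of $2$, which is precisely the set you identify via the forward Glaisher map before noting the containment in $\{\text{distinct parts}\geq 3\}$. So the two arguments are the same injection described in opposite directions.
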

\begin{proof}[Proof.]
Let $S$ and $T$ denote the sets of partitions counted by $\Qdash{1}{3}{n}$ and  $\q{1}{3}{n}$ respectively.  Observe that the parts of partitions in $S$ are congruent to $\pm 3\pmod{4}$ and strictly greater than $1$. Thus we have that each part is odd and is greater than or equal to $3$. Suppose $\lambda \in S$ and let $\lambda_{i}$ denote its $ith$ largest part.

We define an injection $\varphi: S \to T$ by modifying the Glaisher bijection \cite{glaisher_bijection_1883}. Let $p_{i}$  denote the multiplicity of $\lambda_{i} $ as a part of $\lambda$. We write $p_{i} = 2^{a_{1}(i)} + \cdots + 2^{a_{j}(i)}$ (with $a_{1}(i) < a_{2}(i) < \cdots < a_{j}(i)$) in its binary expansion.  Define $\varphi$ part-wise by
\begin{equation*}
  \varphi(\lambda_{i})= (2^{a_{1}(i)}\lambda_{i}, \cdots, 2^{a_{j}(i)}\lambda_{i}).
\end{equation*}
We observe that $\varphi$ replaces $\lambda_{i}$ with the distinct parts $2^{a_{1}(i)}\lambda_{i},\cdots, 2^{a_{j}(i)}\lambda_{i}$. Since $\lambda_{i} \geq 3$, the parts of $\varphi(\lambda)$ are greater than or equal to $3$. Note if $\lambda_{i} \neq \lambda_{j}$ that $2^{a}\lambda_{i} \neq 2^{b}\lambda_{j}$ for any positive integers $a,b$. Hence, the parts of $\varphi(\lambda)$ are distinct. Observe that $\varphi$ is injective by the same reasoning in \cite{euler_partition_identity}.
\end{proof}
We now provide a proof of the $d = 2$ case for Theorem \ref{thm:a=3}. 
\begin{proposition}
For $n \geq 1$ and $d = 2$, 
\[
\Delta_{d}^{(3,-)}(n) \geq 0. 
\]
\end{proposition}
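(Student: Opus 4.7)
The plan is to leverage the coincidence that, for $d = 2$ and $a = 3$, the residue class $\pm 3 \pmod{5}$ coincides with $\pm 2 \pmod{5}$, so the partitions enumerated by $\Q{2}{2}{n}$ and by $\Qdash{2}{3}{n}$ differ only in whether $2$ is permitted as a part. Combined with the second Rogers--Ramanujan identity, this essentially trivializes the inequality.

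First, I would record two parallel elementary decompositions. Splitting a partition counted by $\q{2}{2}{n}$ according to whether its smallest part equals $2$ (in which case the next-smallest part is forced to be at least $4$, and removing the $2$ yields a partition of $n-2$ counted by $\q{2}{4}{n-2}$) gives
\[
\q{2}{2}{n} = \q{2}{3}{n} + \q{2}{4}{n-2}.
\]
On the modular side, splitting a partition counted by $\Q{2}{2}{n}$ by whether $2$ occurs as a part (removing a single copy of $2$ if it does) yields
\[
\Q{2}{2}{n} = \Qdash{2}{3}{n} + \Q{2}{2}{n-2}.
\]

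Next, invoking the second Rogers--Ramanujan identity $\q{2}{2}{m} = \Q{2}{2}{m}$ at both $m = n$ and $m = n-2$ and subtracting the two decompositions gives
\[
\q{2}{3}{n} - \Qdash{2}{3}{n} = \Q{2}{2}{n-2} - \q{2}{4}{n-2} = \q{2}{2}{n-2} - \q{2}{4}{n-2}.
\]
Nonnegativity is then immediate from part-size monotonicity: every partition counted by $\q{2}{4}{n-2}$ also satisfies the weaker condition of having parts $\geq 2$ (with difference $\geq 2$), so $\q{2}{4}{n-2} \leq \q{2}{2}{n-2}$. The boundary cases $n \leq 2$ are trivial since $\Qdash{2}{3}{n} = 0$ there.

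There is no substantive obstacle; the only point requiring care is making sure the decomposition of $\Q{2}{2}{n}$ correctly reflects the exclusion convention of $\Qdash{2}{3}{n}$ --- namely, that the generating function $\frac{1}{(q^{7};q^{5})_{\infty}(q^{3};q^{5})_{\infty}}$ counts exactly those partitions into parts $\equiv \pm 2 \pmod{5}$ with no part equal to $2$. Once the residue coincidence $3 \equiv -2 \pmod{5}$ is noticed, the proof collapses to this elementary bookkeeping plus two applications of Rogers--Ramanujan.
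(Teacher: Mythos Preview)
Your proposal is correct and follows essentially the same route as the paper: both exploit the residue coincidence $\pm 3 \equiv \pm 2 \pmod 5$ and the second Rogers--Ramanujan identity, then reduce the question to the trivial monotonicity $\q{2}{4}{n-2} \leq \q{2}{2}{n-2}$ by splitting off the part~$2$. The paper phrases the same computation as multiplying the generating-function identity by $(1-q^2)$ and then building an explicit injection that removes the part $2$, which is exactly your decomposition $\q{2}{2}{n} = \q{2}{3}{n} + \q{2}{4}{n-2}$ combined with the containment of part sets.
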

\begin{proof}[Proof.]
Recall that the second Rogers-Ramanujan identity yields that for all $n \geq 1$, 
\begin{equation}\label{eq:2nd-rogers}
\sum_{n=0}^{\infty}q_{2}^{(2)}(n)q^{n} =  \sum_{n = 0}^{\infty} \frac{q^{n(n+1)}}{(q\mbox{;} q)_{n}} = \frac{1}{(q^{2}\mbox{;} q^{5})_{\infty}(q^{3}\mbox{;} q^{5})_{\infty}} = \sum_{n=0}^{\infty}Q_{2}^{(2)}(n)q^n.  
\end{equation}
Multiplying by the factor $(1- q^{2}) $ on both sides of (\ref{eq:2nd-rogers}) yields 
\begin{align*}
(1-q^{3})\sum_{n=0}^{\infty}q_{2}^{(2)}(n)q^{n} &= \frac{(1-q^{2})}{(q^{2}\mbox{;}q^{5})_{\infty}(q^{3}\mbox{;}q^{5})_{\infty}}=  \sum_{n=0}^{\infty}\Qdash{2}{3}{n}q^n. 
\end{align*}
Note that when $n= 1,2$, it's clear that $q_{2}^{(3)}(n) = \Qdash{2}{3}{n} = 0$. By setting $m = n+2$, it suffices to show for $m \geq 3$ the inequality,
\[
q_{2}^{(2)}(m) - q_{2}^{(2)}(m -2) \leq q_{2}^{(3)}(m). 
\]
Let $q_{2}^{(2)}(m)^{*}$ denote the set of partitions of $m$ counted by $q_{2}^{(2)}(m)$ with the additional property that they contain a part of size $2$. Note that $q_{2}^{(2)}(m) - q_{2}^{(2)}(m)^{*} = q_{2}^{(3)}(m)$ by construction. 
Thus it suffices to show that $q_{2}^{(2)}(m)^{*} \leq q_{2}^{(2)}(m-2)$.

Let $X$ and $Y$ denote the set of partitions counted by $q_{2}^{(2)}(m)^{*}$ and  $q_{2}^{(2)}(m-2)$ respectively. We also let $\lambda = (\lambda_{1}, \cdots,\lambda_{i-1}, 2) \vdash m$ be a partition counted by $q_{2}^{(2)}(m)^{*}$. Assuming that $X$ is non-empty, we define the function $\varphi: X \to Y$,\
\[
 \varphi(\lambda) =  (\lambda_{1}, \cdots, \lambda_{i-1}).
\]
It is clear from construction that $\varphi(\lambda) \in Y$. We now show that $\varphi$ is injective. Note that if $\lambda, \lambda' \in X$ have different lengths, then $\varphi(\lambda) \neq \varphi(\lambda')$ since $\varphi$ subtracts the length of the partitions by $1$. Hence we may assume that $\lambda, \lambda'$ have the same length. Since $\varphi$ only removes the last part of partitions of $X$, we immediately must have $\lambda = \lambda'$ if $\varphi(\lambda) = \varphi(\lambda')$. This yields  $q_{2}^{(2)}(m)^{*} \leq q_{2}^{(2)}(m-2)$, which completes the proof.
\end{proof}
\begin{proof}[Proof of Theorem \ref{thm:a=3}.] Let $91 \leq d \leq 93$ and $d \geq 187$. The case when $n = 1,2$ is trivial since $\q{d}{3}{n} = \Qdash{d}{3}{n} = 0$. We observe  for $d \geq 1$ and $3 \leq n \leq d + 5$ that
\[
\Delta_{d}^{(3, -)}(n)  \geq 0
\]
since $3$ is the only possible part of partition counted by $\Qdash{d}{3}{n}$ in the interval.  Thus $\Qdash{d}{3}{n} \leq 1 \leq q_{d}^{(3)}(n)$. In the case when $n = d + 6$, we have $q_{d}^{(3)}(d+6) = 2 \geq \Qdash{d}{3}{d+6}$. Now let $n \geq d +7$. To prove Theorem \ref{thm:a=3} for this range of $d$ and $n$, it suffices to show the inequality chain 
\begin{equation}\label{eq:a=3 inequality}
       q_{d}^{(3)}(n) \geq \q{\frac{d + h_{d}^{(3)}}{3}}{1}{\frac{n + h_{n}^{(3)}}{3}} \geq \Qdash{\frac{d+h_{d}^{(3)}}{3} - 3}{1}{\frac{n + h_{n}^{(3)}}{3}} = \Qdash{d + h_{d}^{(3)} -3}{3}{n + h_{n}^{(3)}} \geq \Qdash{d}{3}{n}. 
\end{equation}
The first inequality in (\ref{eq:a=3 inequality}) is justified by Lemma \ref{lem:qstarlemma} for $n \geq d + 6$. For $d$ such that $d + h_{d}^{(3)} = 93$ or $d + h_{d}^{(3)} \geq 189$, the second inequality follows from Proposition \ref{prop:mod alders}. The equality follows from applying Lemma \ref{lem:Qidentitylemma}. The final inequality follows by applying Lemma \ref{lem:mod andrews} with $\rho(T\mbox{;} n + h_{n}^{(3)} ) = \Qdash{d + h_{d}^{(3)} - 3}{3}{n + h_{n}^{(3)}}$ and $\rho(S\mbox{;} n) = \Qdash{d}{3}{n} $. Note that the application of Lemma \ref{lem:mod andrews} is justified since $h_{d}^{(3)} \leq 3$.

\end{proof} 

\subsection{The Proof of Theorem \ref{thm:strengthen unc}}
\begin{proof}[Proof of Theorem \ref{thm:strengthen unc}.]

 The case when $1 \leq n \leq a - 1$ is trivial since  $q_{d}^{(a)}(n) = \Qdash{d}{a}{n} = 0$. We observe for $d + h_{d}^{(a)} \geq 31a$ and $a \leq n \leq d +(a+2)$ that $a$ is the only possible part of a partition counted by $\Qdash{d}{a}{n}$, hence $\Qdash{d}{a}{n} \leq 1 \leq q_{d}^{(a)}(n)$. Note for $d + (3+a) \leq n \leq d + 2a -1$ that the only potential partitions counted by $\Qdash{d}{a}{n}$ are $(a^{\lambda}), (d+ 3+a)$ with $\lambda$ such that $d + (3+a) \leq \lambda a \leq d+ 2a $. Hence $\Qdash{d}{a}{n} \leq 2$. Note that $\Qdash{d}{a}{n} = 2$ when $n = d+3+a$ and $a$ divides $d+3+a$. Observe that this happens only when $d \equiv -3\pmod{a}$. Clearly $\qda \geq 1$ on this interval. Thus for $d + h_{d}^{(a)} \geq 31a$, $1 \leq n \leq d+2a$, $n \neq d+3+a$ when $d \equiv -3\pmod{a}$ ,
\[
\Delta_{d}^{(a,-)}(n) \geq 0. 
\]
Now let $n \geq d + 2a$. In order to prove Theorem \ref{thm:strengthen unc}, we derive the following inequality chain 
\begin{equation}\label{eq:unc strengthen inequality}
       \qda \geq \q{\frac{d + h_{d}^{(a)} }{a}}{1}{\frac{n + h_{n}^{(a)}}{a}} \geq \Qdash{\frac{d+h_{d}^{(a)} }{a} - 3}{1}{\frac{n + h_{n}^{(a)} }{a}} = \Qdash{d + h_{d}^{(a)} -3}{a}{n + h_{n}^{(a)} } \geq \Qdash{d}{a}{n}. 
\end{equation}

We utilize the same argument present in our proof of Theorem \ref{thm:a=3}. The first inequality in (\ref{eq:unc strengthen inequality}) is justified by Lemma \ref{lem:qstarlemma} for $n \geq d + 2a$. For $d$ such that $d + h_{d}^{(a)} = 31a$ or $d + h_{d}^{(a)}  \geq 63a$, the second inequality follows from Proposition \ref{prop:mod alders}. The equality is a result of applying Lemma \ref{lem:Qidentitylemma}. The final inequality follows by applying Lemma \ref{lem:mod andrews} with $\rho(T\mbox{;}n+h_{n}^{(a)} ) = Q_{d +h_{d}^{(a)} -3}^{(a,-)}(n+ h_{n}^{(a)} )$ and $\rho(S\mbox{;}n) = Q_{d}^{(a,-)}(n)$. The application of Lemma \ref{lem:mod andrews} holds since $h_{d}^{(a)} \leq 3$.

We end by remarking that  $h_{d}^{(4)}$ is always less than or equal to 3. Thus, for $n \geq 1$ and $\lceil\frac{d}{4} \rceil = 31$ or $\lceil\frac{d}{4} \rceil \geq 63$, and when $d \equiv 1\pmod{4}$ that $ n\neq d +7$, 
\[
\q{d}{4}{n} \geq \Qdash{d}{4}{n}.
\]
\end{proof} 
\section{On the generalized Kang-Park conjecture}\label{sect: generalized KP exp}
In this section, we provide a proof of Theorem \ref{thm:genkp unc}. This allows us to give an extension of \cite[Theorem 1.6]{ourpaper}. 

We define $r_{d, a}$ to be the largest non-negative integer $r$ such that $2^{r_{d, a}} -1 \leq \frac{d+h_{d}^{(a)}}{a} $. Recall that  $\mathcal{G}_{\frac{d + h_{d}^{(a)} }{a}}^{(1)}(\frac{n + h_{n}^{(a)} }{a})$ counts the number partitions of $\frac{n + h_{n}^{(a)} }{a}$ with the set of parts
\begin{align*}
\{\lambda_{i} \equiv 1, \frac{d + h_{d}^{(a)} }{a} + 2, \cdots,  \frac{d + h_{d}^{(a)} }{a} +2^{r_{d, a}-2},\frac{d + h_{d}^{(a)} }{a} + 2^{r_{d, a}-1} \pmod{2(\frac{d + h_{d}^{(a)} }{a})}\},
\end{align*}
where parts congruent to $ \frac{d + h_{d}^{(a)} }{a} + 2^{r_{d,a}-1} \pmod{2(\frac{d + h_{d}^{(a)} }{a})}$ are distinct.

For the intermediate partition functions in this section, we define for fixed $\ell, a, d \geq 1$ the set $T^{\ell}_{a, d}$ to be
\begin{equation*}
    T^{\ell}_{a, d} := \{\lambda_{i} \equiv \ell, (d  +2)\ell, \cdots, (d +2^{a-1})\ell \pmod{2d\ell} \}.  
\end{equation*}
We will use $T^{\ell}_{a,d}$ to compare $\rho(T^{1}_{a, d}\mbox{;} n)$ and $\mathcal{G}^{(1)}_{d}(n)$ with other partition functions of similar forms and $\Qdashdash{d}{a}{n}$.

We will prove Theorem \ref{thm:genkp unc} in three cases based on the form and size of $d$ and $n$. In addressing all three cases, we use the following result.
\begin{lemma}\label{lem:Prereq1}
Let  $d$, $a$, and $n$ be positive integers such that $a\geq 5$, $d + h_{d}^{(a)}  \geq 2^{a+3}a-a$. Then, \begin{equation*}
    \rho(T_{a, d + h_{d}^{(a)}}^{a}; n + h_{n}^{(a)} ) \geq \Qdashdash{d}{a}{n}.
\end{equation*}\end{lemma}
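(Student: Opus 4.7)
The plan is to invoke Lemma \ref{lem:mod andrews} with $T := T^a_{a, D}$ (where $D := d + h_d^{(a)}$) as the target set and $S := \{(k+1)(d+3)+a : k \geq 0\} \cup \{(k+2)(d+3)-a : k \geq 0\}$, the set of parts of $Q_d^{(a,-,-)}$, as the source. Lemma \ref{lem:mod andrews} will then deliver $\rho(T^a_{a, D}; n + h_n^{(a)}) \geq \rho(S; n) = Q_d^{(a,-,-)}(n)$ provided its three hypotheses are met: $y_1 = a$, $a \mid y_i$ for every $i$, and $x_i \geq y_i$ for every $i$, where $(y_i)$ and $(x_i)$ are the increasing enumerations of $T$ and $S$. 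The first two conditions are immediate from the identity $T^a_{a, D} = a \cdot T_{a, D}$ and the fact that $1$ is the smallest element of $T_{a, D}$.

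The essential step is the elementwise inequality $x_i \geq y_i$, which I would verify blockwise. In the $k$-th block $[2kDa, 2(k+1)Da]$, the elements of $T^a_{a, D}$ are exactly $(2kD+1)a$ and $((2k+1)D + 2^j)a$ for $j = 1, \ldots, a-1$, giving $a$ elements per block of length $2Da$. Meanwhile the elements of $S$ in each interval $[\ell M, (\ell+1) M]$ (with $M := d+3$ and $\ell \geq 1$) are precisely $\ell M + a$ and $(\ell+1) M - a$. After matching the indices of these two interleaved sequences, the inequality $x_i \geq y_i$ reduces to a finite list of explicit linear comparisons in $k$, $j$, and $\ell$. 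The hypothesis $D \geq 2^{a+3}a - a$ is tailored exactly so that even the largest type-B element $((2k+1)D + 2^{a-1})a$ in each $T^a$-block is dominated by its matched $S$-element; in particular, the exponential factor $2^{a+3}$ absorbs the $2^{a-1}$ growth of the residues in $T^a$ and provides enough room to accommodate the offset by $h_d^{(a)} \leq a-1$.

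The main obstacle will be the block-by-block bookkeeping: $T^a_{a, D}$'s elements cluster near the upper half of each $2Da$-block, whereas $S$'s elements are spaced uniformly over $M$-periods, so matching indices requires care, particularly across the boundary where one $T^a$-block aligns with multiple $S$-periods. Controlling the geometric growth of the $2^j$ residues in $T^a$ against the arithmetic progression of $S$ is precisely where the bound $D \geq 2^{a+3}a - a$ is used. Once the elementwise inequality $x_i \geq y_i$ is established, Lemma \ref{lem:mod andrews} applies directly to deliver the conclusion.
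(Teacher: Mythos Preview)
Your overall plan—apply Lemma~\ref{lem:mod andrews} with $S$ the set of parts for $Q_d^{(a,-,-)}(n)$ and $T$ the set appearing in the statement—is exactly the paper's approach, and the first two hypotheses of Lemma~\ref{lem:mod andrews} are indeed immediate. The gap is in your explicit description of the elements of $T$, which makes the key inequality $x_i\ge y_i$ false. You take $T=a\cdot T_{a,D}$, so that the nontrivial elements in the $0$th block are $(D+2^j)a$; in particular $y_2=(D+2)a$, whereas $x_2=2(d+3)-a$. For $a\ge 5$ this forces $y_2>x_2$ for every $d$ in the stated range (indeed $(D+2)a\ge 5(d+2)$ while $x_2<2d+6$), so Lemma~\ref{lem:mod andrews} cannot be invoked, and your assertion that the hypothesis $D\ge 2^{a+3}a-a$ is ``tailored exactly'' to make the comparison work is incorrect as stated.

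The set the paper actually uses (see Table~\ref{tab:1} in its proof) has modulus $2D$, not $2Da$: its residues are $a,\ D+2a,\ D+4a,\ \ldots,\ D+2^{a-1}a$, so that $y_{a\alpha+1}=2D\alpha+a$ and $y_{a\alpha+j}=2D\alpha+D+2^{j-1}a$ for $2\le j\le a$. Equivalently the intended set is $a\cdot T_{a,\,D/a}$ with $D/a=\lceil d/a\rceil$, not $a\cdot T_{a,D}$. With these much smaller $y_i$ the elementwise comparison $x_i\ge y_i$ does go through; the paper reads off from its table that the worst case reduces to $d\ge 2^{a-1}a+2a-3$, which is implied by the hypothesis. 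Your block-by-block bookkeeping scheme would succeed once this corrected set is substituted.
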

\begin{proof}[Proof.]
Let $S_{d}$ denote the set of allowed parts of partitions counted by the functions $\Qdashdash{d}{a}{n}$. We set $x_{i} \in S_{d}$ and $y_{i} \in T_{a, d+h_d^{(a)}}^{a}$ to denote the $ith$ part of their respective sets in ascending order. To compare the values of $x_i, y_i$, we use that $d + h_{d}^{(a)} \leq d + a$, $d + h_{d}^{(a)} \geq a2^{a+3} - a$, and $a \leq \frac{d}{15}$. Note that by using Table \ref{tab:1}, we find that a sufficient bound for $d$ to ensure $x_{i} \geq y_{i}$ is  
\begin{equation}\label{eq:max-bounds}
d \geq 2^{a-1}a +2a - 3.
\end{equation}

\begin{table}[]
\begin{center}
 \caption{ Values of $x_i$, $y_i$ over $j$ for $a \geq 5$ where $j$ and $\alpha$ are integers such that $j \in \{2, 3, ..., a\}$ and $i = a\ell + j$ for some $\ell \in \mathbb{N}$. \label{tab:1}}
 \resizebox{\textwidth}{!}{%
 \begin{tabular}{l|l|l}
 \(i \)                                                                                                     & \(x_i \)                               & \(y_i \)                           \\
 \hline
 \(a\ell + 1: \mbox{ ($a\ell + 1$ is even)} \)     & \(\left( \frac{a}{2}\alpha +\frac{3}{2} \right)(d+3)-a\)  & \( 2(d+h_{d}^{(a)})\alpha + a\) \\
 \hline
 \(a\ell + 1: \mbox{ ($a\ell + 1$ is odd)} \)     & \(\left( \frac{a}{2}\alpha + 1 \right)(d+3)+a\)  & \( 2(d+h_{d}^{(a)})\alpha + a\) \\
 \hline
  \(a \ell + j: \mbox{ ($a \ell + j$ is even)}\)     & \(\left(\frac{a}{2}\alpha + \frac{j+2}{2} \right)(d+3)-a\)  & \( 2(d+h_{d}^{(a)})\alpha + (d+h_{d}^{(a)}) + 2^{j-1}a\) \\
 \hline
  \(a \ell + j: \mbox{ ($a \ell + j$ is odd)}\)     & \(\left(\frac{a}{2}\alpha + \frac{j+1}{2} \right)(d+3)+a\)  & \( 2(d+h_{d}^{(a)})\alpha + (d+h_{d}^{(a)}) + 2^{j-1}a\) \\
 \hline
 \end{tabular}}
 \end{center}
\end{table}

By our assumption on $d$, inequality (\ref{eq:max-bounds}) is satisfied, hence $x_{i} \geq y_{i}$. Since $x_i \geq y_i$ for all positive $i$, we can apply Lemma \ref{lem:mod andrews} with $\rho(T\mbox{;} n + h_{n}^{(a)} ) = \rho(T_{a, d+h_d^{(a)}}^{a}\mbox{;} n + h_{n}^{(a)} )$ and $\rho(S\mbox{;} n) = \Qdashdash{d}{a}{n}$ to obtain the result.

\end{proof}
Now we prove Theorem \ref{thm:genkp unc}. For brevity we denote  $m = \frac{n + h_{n}^{(a)} }{a}$ and $k = \frac{d +h_{d}^{(a)} }{a}$.
\begin{proof}[Proof of Theorem \ref{thm:genkp unc}.]
In the case when $1 \leq n \leq d  +2+a$, one can check that $\Qdashdash{d}{a}{n} = 0$, hence $\qda \geq \Qdashdash{d}{a}{n}$. When $d +3+a \leq n \leq d+2a$, note that $\Qdashdash{d}{a}{n} \leq 1 \leq \qda$, since $\Qdashdash{d}{a}{n} = 1$ only when $n = d+a+3$. 

We now consider the case when $n > d + 2a$. For these $n$ we consider the following inequality chain:
\begin{equation}\label{eq: KeyIneqChainLittle}\begin{split}
     \q{d}{a}{n} \geq \q{k}{1}{m} \geq \rho(T_{a, k}^{1}\mbox{;} m)  = \rho(T_{d+h_d, a}^{a}\mbox{;} n + h_{n}^{(a)} )  \geq \Qdashdash{d}{a}{n}.\end{split}
\end{equation}

We note that the first inequality in (\ref{eq: KeyIneqChainLittle}) holds by Lemma \ref{lem:qstarlemma}. The last inequality holds by Lemma \ref{lem:Prereq1}. The equality follows from the natural bijection of multiplying the parts by $a$. Therefore we reduce to showing that \begin{equation}\label{eq:ReducedProblem}  \q{k}{1}{m} \geq \rho(T_{a, k}^1\mbox{;} m)\end{equation} for the following three cases.

\textit{Case 1}: Let $m \geq 4k + 2^{r_{k}}$ and $k \neq 2^{s} - 1$. We prove Inequality (\ref{eq:ReducedProblem}) by showing \begin{equation}\label{eq:case-1-gen}
    \q{k}{1}{m} \geq \mathcal{G}_{k}^{(1)}(m) \geq \rho(T^1_{a, k}\mbox{;} m).
\end{equation}
The first inequality of (\ref{eq:case-1-gen}) is a result from Lemma \ref{lem:Ye08}. The second inequality follows from $k = \frac{d + h_{d}^{(a)} }{a} \geq 2^{a+3} -1 $ and that there are more parts allowed for partitions counted by $\mathcal{G}_{k}^{(1)}(m)$ than $\rho(T^1_{a, k}\mbox{;} m)$.

\textit{Case 2}: Let $\frac{n + h_{n}^{(a)}}{a} \geq \frac{4(d + h_{d}^{(a)})}{a} + 2^{r_{d,a}}$ and $\frac{d + h_{d}^{(a)}}{a} = 2^{r_{d,a}} - 1$. We prove (\ref{eq:ReducedProblem}) by showing \begin{equation}\label{eq:2nd-case}
    \q{k}{1}{m} \geq \rho(T_{r_{d, a}, k}^{1}\mbox{;} m) \geq \rho(T^1_{a, k}\mbox{;} m).
\end{equation}

We observe that it suffices to prove the inequality when $r_{d,a}= a + 3$ since this yields the minimum number of congruence classes for $\rho(T^1_{r_{d, a}, k}\mbox{;} m)$.  Observe that the first inequality of (\ref{eq:2nd-case}) follows from Theorem \ref{thm:andrews71}. The second inequality follows from $k = \frac{d + h_{d}^{(a)}}{a} \geq 2^{a+3} -1 $ and that there are more parts allowed for partitions counted by $\rho(T^1_{r_{d, a}, k}\mbox{;} m)$ than $\rho(T_{a, k}^1\mbox{;} m)$.

\textit{Case 3}:  Let $m \leq  4k + 2^{r_{k}}$. It suffices to show for $k \geq 2^{a+3} - 1$ and $ 1 \leq m \leq 5k+1$ the inequality (\ref{eq:ReducedProblem}).

 We again use that $q_{k}^{(1)}(m)$ is a weakly increasing function. We also observe that $\rho(T^1_{a,k}\mbox{;} m)$ is weakly increasing since we can add $1$ to be an additional part of a partition of $m$ to obtain a partition for $m +1$.

We prove (\ref{eq:ReducedProblem}) for the interval $1 \leq m \leq 2k+6$. Note that both functions on the interval $1 \leq m \leq k +1$ return one, hence we suppose that $k + 2\leq m \leq 2k$. We notice that $\rho(T^1_{a,k}\mbox{;} k + 2^{i}) = i +1$ for $1 \leq i \leq a -1$ and is constant on the intervals $2^{i} + k \leq m \leq k +2^{i+1} - 1$.  We note that $\q{k}{1}{2^{i} + k} \geq i +1$ for $i \geq 1$ since the partitions $(k+2^{i}), (k +2^{i} - \alpha, \alpha)$ with $\alpha \leq 2^{i-1}$ are counted by $q_{k}^{(1)}(2^{i} + k)$.

Note that in the interval $k + 2^{a-1} \leq m \leq 2k$ that we have $\rho(T^1_{a,k}\mbox{;} m) = a $. Hence, we consider the value of the functions on the interval $2k +1 \leq m \leq 2k + 6$. We have that the partitions $(2k+1-\alpha, \alpha)$ with $\alpha \leq \lfloor \frac{k+1}{2} \rfloor$ are counted by $\q{k}{1}{2k+1}$, yielding $\q{k}{1}{2k+1} \geq 1 + \lfloor \frac{k+1}{2} \rfloor \geq 2^{a+2}+1$. We have $\rho(T^1_{a,k}\mbox{;} 2k+6) = a+3$, from counting the number of partitions with largest parts $2k+1$, $k+2^{a-1}, k+2^{a-2}, ..., k+2, 1$. Therefore, for $2k+1 \leq m \leq 2k+6$, $\q{k}{1}{m} \geq \rho(T^1_{a,k}\mbox{;} m)$

We now show (\ref{eq:ReducedProblem}) for the interval $2k+6 \leq m \leq 3k+1$. Since $q_{k}^{(1)}(m)$ is weakly increasing, we will find a lower bound for $\q{k}{1}{2k+6}$. The partitions of $2k+6$ of the form  
   $ (2k+6 - i, i)$
where $i \in \{1, ..., \lfloor \frac{k}{2}+3 \rfloor\}$ and the trivial partition $(2k+6)$ are counted by $\q{k}{1}{2k+6}$. From this, we obtain the lower bound $\q{k}{1}{2k+6} \geq \lfloor \frac{k}{2} + 3\rfloor + 1$. By $k \geq 2^{a+3} - 1$, we find that\begin{align*}
    q_k^{(1)}(2k+6) \geq \lfloor \frac{2^{a+3} - 1}{2} + 3\rfloor + 1 = 2^{a+2} + 3.
\end{align*}

 Since $\rho(T^1_{a,k}\mbox{;} m)$ is weakly increasing, we will provide an upper bound for $\rho(T^1_{a,k}\mbox{;} 3k +1)$. The largest part that could be in a partition counted by $ \rho(T^1_{a,k}\mbox{;} 3k+1)$ is $2k+1$. We observe that $(2k+1, 1^k)$ is the only partition of $3k+1$ counted by $\rho(T\mbox{;} 3k+1)$ that includes $2k+1$ since $3k+1 - 2k - 1 = k$. 

We now consider partitions counted by $\rho(T^1_{a,k}\mbox{;} 3k+1)$ with largest part $k + 2^{\ell}$ where $\ell \in \{1, 2, ..., a-1\}$. For each $\ell$, we notice that $3k+1  - k - 2^{\ell} = 2k + 1 - 2^{\ell}$. Therefore, any partition whose largest part is of the form $k+2^{\ell}$ can have at most one other element of the form $k+2^w$ with $w \in \{1, ...., \ell\}$. Thus, for each fixed $\ell \in \{1, ..., a-1\}$, there are at most $(\ell + 1)$ partitions of $3k+1$ such that $k+2^{\ell}$ is the largest part. Finally, there is only one partition of $3k+1$ with largest part of 1. Therefore, we obtain that 
\begin{align*}\rho(T^{1}_{a, k}\mbox{;} 3k+1) \leq  \frac{(a+2)(a-1)}{2} + 1 + 1 = 1 + \frac{a(a+1)}{2}.
\end{align*}  Note that since $2^{a+2} + 3 \geq \frac{a(a+1)}{2} + 1$ for $a \geq 1$, we have that (\ref{eq:ReducedProblem}) holds for this interval. 

We now prove (\ref{eq:ReducedProblem}) for $3k+1 \leq m \leq 4k+1$. Since $q_{k}^{(1)}(m)$ is weakly increasing, we will find a lower bound for $\q{k}{1}{3k+1}$. The partitions of $3k + 1$ of the form $( 3k+1 - i, i)$ for $i \in \{1, ..., \lfloor \frac{2k+1}{2} \rfloor\}$ and the trivial partition $(3k+1)$ are counted by $\q{k}{1}{3k+1}$. Thus, we have that \begin{align*}
    q_k^{(1)}(3k+1) \geq \lfloor \frac{2k+1}{2}\rfloor + 1 \geq \lfloor \frac{2\cdot (2^{a+3} - 1)+1}{2}\rfloor + 1 = 2^{a+3}.
\end{align*} 

We now provide an upper bound for $\rho(T^1_{a,k}\mbox{;} m) $ in $3k+1 \leq m \leq 4k +1$ by obtaining an upper bound for $\rho(T^1_{a,k}\mbox{;} 4k+1)$. By repeating the same procedure by considering partitions with fixed largest part, there is one partition with largest part $4k + 1$, there are at most $a-1$ partitions with largest part of the form $3k + 2^{\ell}$ with $\ell \in \{1, 2, ..., a-1\}$, at most $a$ partitions with largest part $2k +1$, and for each $h \in \{1, \cdots, a-1\}$ at most $(h + 1)^2$ partitions with largest part $k + 2^{h}$ by considering the partitions with one, two, and three parts that are not equal to $1$. Finally, there is only one partition of $4k+1$ whose largest part is 1. Using this method, we have \begin{align*}
    \rho(T^1_{a,k}\mbox{;} 4k+1) \leq 1 + (a - 1) + a + (2^2 + 3^2 + ... + a^2) + 1 = 2a +  \frac{a(a+1)(2a+1)}{6}.
\end{align*}
Thus, we have that (\ref{eq:ReducedProblem}) holds for $3k + 1 \leq m \leq 4k + 1$ since $k \geq 2^{a+3} -1$.

Finally, we show that (\ref{eq:ReducedProblem}) holds for $4k+1 \leq m \leq 5k+1$.  Since $q_{k}^{(1)}(m)$ is weakly increasing, we will find a lower bound for $\q{k}{1}{4k+1}$. The partitions of $4k+1$ of the form $( 4k+1 - i, i)$ where $i \in \{1, ...., \lfloor \frac{3k+1}{2} \rfloor\}$ and the trivial partition $(4k+1)$ are counted by $q_k^{(1)}(4k+1)$. This yields the lower bound,
\begin{align*}
    q_{k}^{(1)}(4k+1) \geq \lfloor \frac{3k+1}{2}\rfloor + 1 \geq \lfloor \frac{3\cdot (2^{a+3} - 1)+1}{2}\rfloor + 1 = 3\cdot 2^{a+2}.
\end{align*}

We now provide an upper bound for $\rho(T^1_{a,k}\mbox{;} m) $ in $4k+1 \leq m \leq 5k+1$, which we'll do by obtaining an upper bound for $\rho(T^1_{a,k}\mbox{;} 5k+1)$. The only partition with largest part $4k+1$ is $(4k+1, 1^k)$. By repeating the same procedure used in the interval $2k +6 \leq m \leq 3k + 1$, we find that there are at most $a(a-1)$ partitions with largest part of the form $3k + 2^{\ell}$ with $\ell \in \{1, 2, ..., a-1\}$, at most $1 + \frac{a(a+1)}{2}$ partitions with largest part $2k + 1$ by considering various fixed second largest potential parts. We also find for each $h \in \{1, \cdots, a-1\}$ at most $(h +1)^{3}$ partitions whose largest part is $k + 2^{h}$ by considering that there are at most four parts that are not equal to $1$. Finally there is only one partition of $5k+1$ whose largest part is $1$. We obtain by adding the upper bound
\begin{align*}\rho(T^1_{a,k}\mbox{;} 5k+1) \leq 1 + a(a-1) + 1 + \frac{a(a+1)}{2}+  \frac{a^2(a+1)^2}{4}.
\end{align*} 
Because $k \geq 2^{a+3} - 1$, we have that (\ref{eq:ReducedProblem}) holds for this interval. 

To summarize, we obtain bounds for $\q{k}{1}{m}$ and $\rho(T^1_{a,k}\mbox{;} m)$ displayed in Table \ref{tab:3}. Notice that for $a\geq 5$, this implies that $\q{k}{1}{m} \geq \rho(T^1_{a,k}\mbox{;} m)$ for $2k+6 \leq m \leq 5k+1$. This completes the proof of (\ref{eq:ReducedProblem}) for $1 \leq m \leq 5k+1$

\begin{table}[]
\begin{center}
\caption{Bounds on $\q{k}{1}{m}$ and $\rho(T_{a,k}^{1}\mbox{;} m)$ for intervals of $m$.\label{tab:3}}
\resizebox{\textwidth}{!}{%
\begin{tabular}{l|l|l}
 \(m \)                                                                                                     & \(\q{k}{1}{m} \)                               & \(\rho(T^1_{a,k}\mbox{;} m) \)                           \\
\hline
 \(2k+6 \leq  m \leq 3k+1 \)     &  \(\geq 2^{a+2} + 3 \)  & \(\leq  1 + \frac{a(a+1)}{2}\) \\
\hline
 \(3k+1 \leq  m \leq 4k+1 \)     & \(\geq  2^{a+3}\)  &   \(\leq 2a + \frac{a(a+1)(2a+1)}{6}\) \\
\hline
  \(4k+1 \leq  m \leq 5k+1 \)     & \(\geq  3\cdot 2^{a+2}\)  &   \(\leq 2+ a(a-1)+  \frac{a(a+1)}{2}+  \frac{a^2(a+1)^2}{4}.\)\\
\hline
\end{tabular}}
\end{center}
\end{table}
\end{proof}

\section{A strengthening of the generalized Kang-Park conjecture}\label{sect: generalized KP con}

In this section, we provide a proof of Theorem \ref{thm:genkp con} by using Conjecture \ref{conj:aldermconj}. We also prove Conjecture \ref{conj:aldermconj} for $d+2\leq n \leq 5d$ to obtain that Theorem \ref{thm:genkp con} holds unconditionally for the specified range of $n$ in Theorem \ref{thm:genkp con}.

\begin{proof}[ Proof of conditional part of Theorem \ref{thm:genkp con}.] 

Via work done in the unconditional component of Theorem \ref{thm:genkp con}, we assume that $n \geq d+2a$. We employ the following inequality chain: 
\begin{align*}
   \qda \geq \q{\frac{d + h_{d}^{(a)} }{a}}{1}{\frac{n + h_{n}^{(a)} }{a}} \geq Q_{\frac{d + h_{d}^{(a)} }{a} - 4}^{(1, -)}(\frac{n + h_{n}^{(a)} }{a}) \\  = \Qdash{d  + h_{d}^{(a)} -a - 3}{a}{n + h_{n}^{(a)} } \geq \Qdash{d}{a}{n}. 
\end{align*}

 We note that the first inequality holds for $n \geq d + 2a$ by Lemma \ref{lem:qstarlemma}. The second inequality follows by assuming Conjecture \ref{conj:aldermconj}. The equality follows from the bijection by multiplying the  parts of partitions counted by $\Qdash{\frac{d + h_{d}^{(a)} }{a}  - 4}{1}{\frac{n + h_{n}^{(a)} }{a}}$ by $a$.
 We now show the last inequality.
 
 Let $X$ and $Y$ be the set of partitions counted by $\Qdash{d}{a}{n}$ and $\Qdash{d + h_{d}^{(a)} - 3 - a}{a}{n + h_{n}^{(a)} }$ respectively. Let $x_{i}$ and $y_{i}$ denote the allowed parts of partitions in $X$ and $Y$ with indexing with respect to increasing size. Note that for all $i\geq 1$ that we have $x_{i} \geq y_{i}$ since $h_{d}^{(a)} \leq a$. This allows us to apply Lemma \ref{lem:mod andrews}, yielding $\Qdash{d + h_{d}^{(a)} - a - 3}{a}{n + h_{n}^{(a)} } \geq \Qdash{d}{a}{n}$.  
\end{proof}
We now proceed with the unconditional part of Theorem \ref{thm:genkp con}. We first prove Theorem \ref{thm:genkp con} in the case when $1 \leq n \leq d + 2a$. 
\begin{lemma}\label{lem:very-small-n-genkpcon}
Let $a \geq 1$,  $\lceil \frac{d}{a} \rceil \geq 12$, and $1 \leq n \leq d + 3a$. If $d \not\equiv -3\pmod{a}$, we have
\[
\Delta_{d}^{(a,-)}(n) \geq 0.
\]
If $d \equiv -3\pmod{a}$ then $\Delta_{d}^{(a, -)}(n) \geq 0$ for all $1 \leq n \leq d + 3a$  except when $n = d + 3+a$. 
\end{lemma}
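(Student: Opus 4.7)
The plan is to prove the lemma by directly enumerating the partitions counted by $\Qdash{d}{a}{n}$, exploiting the hypothesis $\lceil d/a\rceil\geq 12$ (equivalently $d\geq 11a+1$) to force only the two smallest allowed parts, $a$ and $d+3+a$, to fit inside $n$. I will split $1\leq n\leq d+3a$ into subranges based on which parts from $\{a,\, d+3+a,\, 2d+6-a,\,\dots\}$ can appear.

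For $1\leq n\leq a-1$, both counting functions vanish. For $a\leq n\leq d+a+2$, the part $d+3+a$ already exceeds $n$, so only the part $a$ is available, giving $\Qdash{d}{a}{n}\leq 1\leq \qda$ (using the one-part partition $(n)$ for the lower bound).

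The main case is $d+a+3\leq n\leq d+3a$. I first verify that $d\geq 11a+1$ forces $2(d+3)-a>d+3a$, so the part $2d+6-a$ is too large; thus partitions in $\Qdash{d}{a}{n}$ use only $a$ and $d+3+a$. Moreover $2(d+3+a)>d+3a$ forces the multiplicity of $d+3+a$ to be at most $1$. Hence each such partition is either $(a^{n/a})$ (requiring $a\mid n$) or $(d+3+a,\,a^{(n-d-3-a)/a})$ (requiring $a\mid (n-d-3)$), so $\Qdash{d}{a}{n}\leq 2$. Equality requires both $a\mid n$ and $a\mid (n-d-3)$, hence $a\mid (d+3)$, i.e., $d\equiv -3\pmod{a}$. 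This already yields the lemma when $d\not\equiv -3\pmod{a}$, since there $\Qdash{d}{a}{n}\leq 1\leq \qda$.

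For $d\equiv -3\pmod{a}$, the only values with $\Qdash{d}{a}{n}=2$ in the range are $n=d+3+a$ and (when $a\geq 3$) $n=d+3+2a$. At $n=d+3+2a$, both $(d+3+2a)$ and $(d+3+a,a)$ are counted by $\qda$ (the latter has gap $d+3\geq d$ and smallest part $a$), so $\qda\geq 2$. At $n=d+3+a$, any two-part partition $(x,y)$ with $x\geq y+d$, $y\geq a$, $x+y=d+3+a$ forces $y\leq (3+a)/2$, which combined with $y\geq a$ requires $a\leq 3$; in those cases $(d+3,a)$ contributes and $\qda\geq 2$, but for $a\geq 4$ only $(d+3+a)$ is counted, yielding the exceptional case $\qda=1<2=\Qdash{d}{a}{d+3+a}$. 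The main obstacle is just careful case analysis at the boundary $n=d+3+a$ together with the reduction of the parts set to $\{a,\,d+3+a\}$; both are elementary once the divisibility conditions are tracked cleanly.
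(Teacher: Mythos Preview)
Your proof is correct and follows essentially the same approach as the paper: reduce the allowed parts of $\Qdash{d}{a}{n}$ to $\{a,\,d+3+a\}$ on the range $[d+a+3,\,d+3a]$, bound the multiplicity of $d+3+a$ by one, and compare directly with the trivial partitions counted by $\qda$. Your analysis at $n=d+3+a$ is actually more refined than the paper's (you show the exception is only genuine for $a\geq 4$), but this is extra information rather than a different method; the paper simply flags $n=d+3+a$ as the exception without examining whether the inequality actually fails there.
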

\begin{proof}[Proof.]
The case when $1 \leq n \leq a -1$ is trivial since $\qda = \Qdash{d}{a}{n} = 0$. Observe for $a \leq n \leq d +2+a$ that $\q{d}{a}{n} \geq  1\geq \Qdash{d}{a}{n}$. Note for $d + 3+a \leq n \leq d +2a$ that $\qda \geq 1 \geq \Qdash{d}{a}{n}$. If $d \not\equiv -3 \pmod{a}$,  we have $\Qdash{d}{a}{d+a+3} = 1$, hence $\q{d}{a}{d+a+3} \geq  1 =  \Qdash{d}{a}{d+a+3}$.

If $d \equiv -3 \pmod{a}$, we find that the only $n$ in the interval with $\Qdash{d}{a}{n} = 2$ is $n =  d + 3+a$. Thus for any $d+3+a \leq n \leq d+2a$ and $n \neq d+3+a$, we have $\qda \geq 1 \geq \Qdash{d}{a}{n}$. 

We now consider the case when $d +2a \leq n \leq d +3a$. Observe that $q_{d}^{(a)}(n) \geq 2$ with partitions $(n)$ and $(n-a,a)$ since $n \geq d + 2a$. Notice that $\Qdash{d}{a}{n} \leq 2$ since the only partitions in the interval are $a^{\lambda}$ and $(d+3+a, a)$. Thus $q_{d}^{(a)}(n) \geq 2 \geq \Qdash{d}{a}{n}$. 
\end{proof}
We now give an unconditional proof of the second statement of Theorem \ref{thm:genkp con} for the case where $\lceil \frac{d + 2a}{a} \rceil \leq \lceil \frac{n}{a} \rceil \leq 5\lceil \frac{d}{a} \rceil $.  For this we  use the following lemma.
\begin{lemma}\label{lem: Littlem}
For integers $d \geq 12$ and $d+2 \leq n \leq 5d$, 
   \[ \q{d}{1}{n} \geq \Qdash{d-4}{1}{n}.
   \]
\end{lemma}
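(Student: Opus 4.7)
The argument will parallel that of Lemma \ref{lem:small m k-3}, adapted to the hypothesis $d \geq 12$ and to the smaller modular shift $d-4$ in place of $d-3$. First, I would verify that both $q_{d}^{(1)}(n)$ and $Q_{d-4}^{(1,-)}(n)$ are weakly increasing in $n$, by the same argument as in Lemma \ref{lem:small m k-3}: for $q_{d}^{(1)}$, add $1$ to the largest part of any partition counted; for $Q_{d-4}^{(1,-)}$, adjoin a new part of size $1$. By this monotonicity, it suffices to partition $[d+2, 5d]$ into subintervals $[n_1, n_2]$ and show on each that $q_{d}^{(1)}(n_1) \geq Q_{d-4}^{(1,-)}(n_2)$.

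Next I would identify the admissible parts of partitions counted by $Q_{d-4}^{(1,-)}(n)$ for $n \leq 5d$: these are the positive integers congruent to $\pm 1 \pmod{d-1}$ other than $d-2$, giving the set $\{1,\, d,\, 2d-3,\, 2d-1,\, 3d-4,\, 3d-2,\, 4d-5,\, 4d-3,\, 5d-6,\, 5d-4\}$. The subinterval decomposition needs to be finer than the one used in Lemma \ref{lem:small m k-3}; I would take the thresholds to be the points where each new part first becomes available, giving subintervals such as $[d+2, 2d-4]$, $[2d-3, 2d-2]$, $[2d-1, 3d-5]$, $[3d-4, 3d-3]$, $[3d-2, 4d-6]$, $[4d-5, 4d-4]$, $[4d-3, 5d-7]$, $[5d-6, 5d-5]$, and $[5d-4, 5d]$.

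On each subinterval, I would bound $Q_{d-4}^{(1,-)}(n_2)$ from above by direct enumeration, organized by choice of largest part and then by the multiplicity of the smaller medium-sized parts $d$, $2d-3$, and $2d-1$. For the lower bound on $q_{d}^{(1)}(n_1)$, I would start from the trivial partition $(n_1)$ and the two-part partitions $(n_1 - i, i)$ with $1 \leq i \leq \lfloor (n_1 - d)/2 \rfloor$, contributing at least $1 + \lfloor (n_1 - d)/2 \rfloor$ partitions, and, once $n_1 \geq 3d + 3$, I would additionally count three-part partitions $(a, b, c)$ with $a \geq b + d$, $b \geq c + d$, $a + b + c = n_1$, enumerated by fixing $c$ and solving for $b$. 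Each lower bound is linear (and, on the last intervals, quadratic) in $d$, while the upper bound is a small fixed polynomial in $d$, so the inequality should hold for $d \geq 12$.

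The main difficulty will arise on the later intervals $[4d-3, 5d-7]$, $[5d-6, 5d-5]$, and $[5d-4, 5d]$, where $Q_{d-4}^{(1,-)}(n_2)$ accumulates many partitions from repeated use of $d$, $2d-3$, and $2d-1$ in combination with the new large parts, and the pure two-part lower bound for $q_{d}^{(1)}(n_1)$ ceases to dominate. Incorporating the three-part partitions into the lower bound should close the gap, but the bookkeeping is delicate, and I expect the hypothesis $d \geq 12$ to be essentially tight in at least one of these final subintervals, so the case analysis will need to be carried out interval by interval rather than with a single uniform estimate.
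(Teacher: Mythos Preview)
Your proposal is correct and follows essentially the same approach as the paper: both use the weak monotonicity of the two functions, break $[d+2,5d]$ into subintervals, bound $Q_{d-4}^{(1,-)}$ at the right endpoint by enumeration over the largest part, and bound $q_d^{(1)}$ at the left endpoint first by two-part partitions and then, on the final intervals, by adjoining three-part partitions. The paper uses a coarser decomposition into five intervals ($[d+2,2d-4]$, $[2d-3,3d-5]$, $[3d-4,4d-6]$, $[4d-5,5d-8]$, $[5d-8,5d]$) rather than your nine, which cuts the bookkeeping roughly in half; also note that the $Q_{d-4}^{(1,-)}$ upper bounds are absolute constants (e.g.\ $2,5,11,20,36$), so linear-in-$d$ lower bounds for $q_d^{(1)}$ already suffice and you will not need the quadratic three-part count you anticipate.
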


\begin{proof}[Proof of Lemma \ref{lem: Littlem}.]
Recall that $ \q{d}{1}{n}$  is weakly increasing since for any partition of positive integer $n$ counted by the function $ \q{d}{1}{n}$, one can add 1 to the largest part of the partition to obtain a partition counted by $\q{d}{1}{n+1}$. Similarly, $Q_{d-4}^{(1,-)}(n)$ is weakly increasing since for any partition of $n$ counted by $Q_{d-4}^{(1,-)}(n)$, we can adjoin $1$ as a part to create a partition of $n+1$ counted by $Q_{d-4}^{(1,-)}(n+1)$. 

We begin by showing that $ \q{d}{1}{n} \geq \Qdash{d-4}{1}{n}$ for $d+2 \leq n \leq 2d-4$. Note that $\q{d}{1}{d+2} \geq 2$ since the partitions $(1, d+1)$ and $(d+2)$ are counted.  Note that $\Qdash{d-4}{1}{2d-4} = 2$ with associated partitions $(d, 1^{d - 4}), (1^{2d-4})$. Thus within the interval $d +2 \leq n \leq 2d -4$ the inequality holds. 

We now show $ \q{d}{1}{n} \geq \Qdash{d-4}{1}{n}$ for $2d-3 \leq n \leq 3d-5$.
We note that $\Qdash{d-4}{1}{3d-5} = 5$ with associated partitions $((2d-1), 1^{d-4}), ((2d-3), 1^{d-2}), (d^2, 1^{d-5}), (d, 1^{2d-5}), (1^{3d-5})$, hence it suffices to show that $q_{d}^{(1)}(n) \geq 5$ in this interval. We notice that the partitions of $2d-3$ in the set $\{(2d-3)\} \cup \{( 2d-3 - i, i): i \in \{1, 2,..., \lfloor \frac{d-3}{2} \rfloor \}\}$ are counted by $q_{d}^{(1)}(2d-3)$. Therefore, $q_{d}^{(1)}(2d-3) \geq 1 + \lfloor \frac{d-3}{2} \rfloor$. Since $d \geq 12$, we have that $q_{d}^{(1)}(2d-3) \geq 5$ as desired. 

 We now show that $q_d^{(1)}(n) \geq \Qdash{d-4}{1}{n}$ for $3d-4 \leq n \leq 4d-6$. We note that $\Qdash{d-4}{1}{4d - 6} = 11$ since there is one partition with largest part for each element in $\{3d-2, 3d-4\}$, two with largest part of $2d-1$, three with largest part for each element in $\{2d-3, d\}$, and one partition of largest part $1$. Hence, it suffices to show $q_{d}^{(1)}(n) \geq 11$ in the interval. We notice that the partitions of $3d-4$ in the set $\{(3d-4)\} \cup \{( 3d-4 - i, i): i \in \{1, 2,..., d-2 \}\}$ are counted by $q_{d}^{(1)}(3d-4)$. Therefore, $q_{d}^{(1)}(3d-4) \geq d-1 \geq 11$.

We now show that  $\q{d}{1}{n} \geq \Qdash{d-4}{1}{n}$ for $4d-5 \leq n \leq 5d-8$. We find that $\Qdash{d-4}{1}{5d-8} = 20$ since there is one partition with largest part for each element in $\{4d-3, 4d -5\}$, two with largest part for each element in $\{3d-2, 3d-4\}$,  five with largest part $2d-1$, four with largest part for each element in $\{2d-3, d\}$, and one with largest part $1$. Hence, it suffices to show $q_{d}^{(1)}(n) \geq 20$.  We notice the partitions of $4d-5$ in the set $\{(4d-5)\} \cup \{(i, 4d-5 - i): i \in \{1, 2,..., \lfloor \frac{3d-5}{2} \rfloor \}\}$ are counted by $q_{d}^{(1)}(4d-5)$. Note that $q_{d}^{(1)}(4d-5)$ also counts partitions of the form $(3d-6-j, j+d, 1)$ with $j \in \{1, 2, ..., \lfloor \frac{d-6}{2} \rfloor \}$ and partitions of form $(3d-8-\ell, \ell+1+d, 2)$ with $\ell \in \{1, 2, ..., \lfloor \frac{d-9}{2} \rfloor \}$. Therefore, $q_{d}^{(1)}(4d-5) \geq 1 + \lfloor \frac{3d-5}{2} \rfloor + \lfloor \frac{d-6}{2} \rfloor + \lfloor \frac{d-9}{2} \rfloor$. Since $d \geq 12$, we have the desired inequality. 

Finally, we prove that $\q{d}{1}{n} \geq \Qdash{d-4}{1}{n}$ for $5d-8 \leq n \leq 5d$. We find that $Q_{d-4}^{(1, -)}(5d) = 36$ since there is one partition with largest part for each element in $\{5d-4, 5d-6\}$, two with largest part for each element in $\{4d-3, 4d-5\}$, five with largest part for each element in $\{3d- 2, 3d-4\}$, eight with largest part $2d-1$, six with largest part $2d-3$, five with largest part $d$, and one with largest part $1$.  Hence it suffices to show that $q_{d}^{(1)}(n) \geq 36$. We notice that the partitions of $5d-8$ in the set $\{(5d-8)\} \cup \{(5d-8 - i,  i): i \in \{1, 2,..., 2d-4 \}\}$ are counted by $q_{d}^{(1)}(5d-8)$. In addition, $q_{d}^{(1)}(5d-8)$ counts the partitions of the form $(4d - 9 - j, d + j,1 )$ for $j \in \{1, \cdots, \lfloor \frac{2d -9}{2} \rfloor \}$. We also note $q_{d}^{(1)}(5d-8)$  counts the partitions of the form $(4d - 11 - \ell, d + 1+ \ell,2 )$ for $\ell \in \{1, \cdots, d-6 \}$ and  $(4d - 13 - r, d + 2+ r, 3)$ for $r \in \{1, \cdots, \lfloor \frac{2d-15}{2}\rfloor \}$. Therefore, $$q_{d}^{(1)}(5d-8) \geq 2d-4 + \lfloor \frac{2d -9}{2} \rfloor + d - 6 + \lfloor \frac{2d-15}{2}\rfloor \geq 36$$ using the assumption that $d \geq 12$.

\end{proof}

\begin{proof}[Proof of unconditional part of Theorem \ref{thm:genkp con}.]
The case $1 \leq n \leq d+3a $ with the additional condition $n \neq d+a -3$ when $d \equiv -3 \pmod{a}$ is addressed by Lemma \ref{lem:very-small-n-genkpcon}. 

For the case $n \geq d+3a$, we employ the same inequality chain as used in the conditional part of Theorem \ref{thm:genkp con}. Recall that
\begin{align*}
    \qda \geq \q{\frac{d + h_{d}^{(a)}}{a}}{1}{\frac{n + h_{n}^{(a)} }{a}}  \geq Q_{\frac{d + h_{d}^{(a)} }{a} - 4}^{(1, -)}(\frac{n + h_{n}^{(a)} }{a}) \\ = \Qdash{d + h_{d}^{(a)} -a - 3}{a}{n + h_{n}^{(a)} } \geq \Qdash{d}{a}{n}. 
\end{align*}

All inequalities and equalities except for the second one are justified by our work in the conditional component of Theorem \ref{thm:genkp con}. The second
inequality for the cases $\lceil \frac{d}{a} \rceil + 2 \leq \lceil \frac{n}{a} \rceil \leq  5\lceil \frac{d}{a}\rceil$ is resolved by Lemma \ref{lem: Littlem}.
\end{proof}

\section{Potential methods and future directions}\label{sect:asymptotics} 
In this section, we  describe potential methods in proving more cases of Conjectures \ref{conj:a=3}, \ref{conj:genkp} and extending the results of Theorem \ref{thm:strengthen unc}.

We observe by repeating the arguments presented in the proof of \cite[Theorem 1.9]{ourpaper} to obtain that for $a, n \geq 1$ and $d \geq 4$ such that $a < d+3$ and $\mbox{gcd}(a, d + 3) = 1$,
        \[
            \lim_{n \to \infty} \Daa = \lim_{n \to \infty} \qda \left(1 - \frac{\Qda}{\qda}\right) = +\infty.
        \]
     Observe Remark \ref{rem:strengthen-inequality} indicates that $\Daadash \geq \Daa$, implying our desired result.

We note from Theorem \ref{thm:a=3} that the remaining cases of Conjecture \ref{conj:a=3} are $3 \leq d \leq 90,\ 94 \leq d \leq 186$. The sub-cases when $d \geq 93$ are divisible by $3$ are addressed by \cite[Proposition 4.1]{ourpaper}. 

We  describe a potential method in resolving more cases of Conjecture \ref{conj:a=3} and extending Theorem \ref{thm:strengthen unc}. First, one should derive computationally effective asymptotic expressions for $q_{\lceil \frac{d}{a}\rceil}^{(1)}(\lceil \frac{n}{a}\rceil)$ and $\Qdash{\lceil \frac{d}{a}\rceil-3}{1}{\lceil \frac{n}{a}\rceil}$ by using the work of Alfes, et.al \cite{alfes_proof_2011}. One can then use these expressions to show $q_{\lceil \frac{d}{a}\rceil}^{(1)}(\lceil \frac{n}{a}\rceil) \geq \Qdash{\lceil \frac{d}{a}\rceil-3}{1}{\lceil \frac{n}{a}\rceil} $ for suitable $\lceil \frac{n}{a}\rceil$ and use (\ref{eq:a=3 inequality}) and (\ref{eq:unc strengthen inequality}) to prove more small $d$ cases of Conjecture \ref{conj:a=3} and extend Theorem \ref{thm:strengthen unc}.

From application of the work of Kang and Kim \cite{kang_asymptotics_2021}, it appears that this approach will be feasible for $\lceil \frac{d}{a}\rceil \geq 10$. In particular, one can apply \cite[Theorem 1.1]{kang_asymptotics_2021}.
\begin{theorem}[Kang-Kim \cite{kang_asymptotics_2021}, 2021]\label{thm:KangKimAssympt}
Let $a$, $d$, $m$, $m_1$, $m_2$ be integers such that $0 \leq m_1 < m_2 < m$ and $a, d \geq 1$. For $\alpha_d$ the
unique real root of $x^d + x - 1$ in the interval $(0, 1)$ and $$A_d = \frac{d}{2}\log^2\alpha_d + \sum_{r=1}^{\infty}\frac{\alpha_d^{rd}}{r^2},$$ we let $M_d = \lfloor \frac{\pi^2}{3A_d}\rfloor$. Then, it can be found that $$\lim_{n \to \infty} \qda - \rho(\{p \in \mathbb{N}: p \equiv m_1 \pmod{m} \mbox{ or } p \equiv m_2 \pmod{m}\};n) $$ $$=  \begin{cases}+\infty \mbox{, if $m > M_d$} \\ -\infty \mbox{, if $m \leq M_d$.} \\ 
\end{cases}$$
\end{theorem}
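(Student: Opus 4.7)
The plan is to extract precise exponential asymptotic formulas for both $\q{d}{a}{n}$ and $\rho(\{p\in\mathbb{N} : p \equiv m_1 \text{ or } m_2 \pmod m\}; n)$, and then compare their dominant exponential factors. Both functions satisfy $\log(\cdot) \sim c\sqrt{n}$ for a positive constant $c$, so the sign of the difference of the two constants determines whether the difference $\q{d}{a}{n} - \rho(\cdot;n)$ tends to $+\infty$ or $-\infty$.

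For the residue-class function, the generating series factors as $\prod_{j\geq 0}(1-q^{m_1+jm})^{-1}(1-q^{m_2+jm})^{-1}$ (with an obvious tweak when $m_1=0$), which fits Meinardus' Tauberian theorem directly. The associated Dirichlet series $D(s) = m^{-s}[\zeta(s, m_1/m)+\zeta(s,m_2/m)]$ has a simple pole at $s=1$ of residue $2/m$, so Meinardus yields $\log \rho(\cdot;n) \sim 2\pi\sqrt{n/(3m)}$.

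For $\q{d}{a}{n}$, the generating series $\sum_{k\geq 0} q^{d\binom{k}{2}+ka}/(q;q)_k$ is a Rogers-Ramanujan-type sum that is not directly covered by Meinardus; instead, I would perform a saddle-point analysis. Setting $q=e^{-t}$ with $t\to 0^+$, the sum concentrates at an optimal index $k_*(t)$, and differentiating the log-summand in $k$ yields a critical-point equation that in the $t\to 0^+$ limit reduces to $\alpha_d^d + \alpha_d = 1$ with $\alpha_d = e^{-k_*t}$. Substituting this back, applying Laplace's method to the sum, and balancing the small parameter $t$ against $n$ produces the asymptotic $\log \q{d}{a}{n} \sim 2\sqrt{A_d\, n}$, with $A_d = \tfrac{d}{2}\log^2 \alpha_d + \sum_{r\geq 1}\alpha_d^{rd}/r^2$ arising naturally from the exponent of the saddle value together with the expansion of $\log(q;q)_{k_*}$ in powers of $\alpha_d^d$. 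The shift $ka$ perturbs only subexponential prefactors and drops out of the leading constant.

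Comparing the two asymptotics, $\q{d}{a}{n}$ dominates exponentially precisely when $2\sqrt{A_d} > 2\pi/\sqrt{3m}$, i.e.\ $m > \pi^2/(3A_d)$, which (away from the integer boundary) is the same as $m>M_d$; in the complementary range the roles reverse and the difference tends to $-\infty$. The main obstacle in this plan is the rigorous saddle-point evaluation for $\q{d}{a}{n}$: identifying the implicit relation between $k_*$ and $t$ to sufficient accuracy, justifying the Laplace approximation with uniform error control, and extracting the precise closed-form $A_d$ including the $\sum \alpha_d^{rd}/r^2$ correction. This mirrors analyses carried out in particular Rogers-Ramanujan cases by Meinardus and others, but executing it uniformly in $d$ and $a$ is the delicate step.
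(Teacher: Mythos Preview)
The paper does not prove this statement; it is quoted verbatim from Kang and Kim \cite{kang_asymptotics_2021} and used only as heuristic support in Section~\ref{sect:asymptotics}. There is therefore no ``paper's own proof'' to compare against.

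Your sketch is a faithful outline of the expected argument and almost certainly tracks what Kang and Kim actually do: Meinardus' theorem applied to the two arithmetic progressions gives $\log\rho(\cdot;n)\sim 2\pi\sqrt{n/(3m)}$, and a saddle-point analysis of $\sum_{k\geq 0} q^{d\binom{k}{2}+ka}/(q;q)_k$ with $q=e^{-t}$ leads, via the stationary equation $\alpha_d^d+\alpha_d=1$, to $\log q_d^{(a)}(n)\sim 2\sqrt{A_d\,n}$. Comparing the two exponents then gives the stated dichotomy. You correctly flag the genuinely hard step, namely making the saddle-point evaluation rigorous (uniform localisation of $k_*$, Laplace estimates, and extracting the closed form for $A_d$ including the dilogarithm-type sum). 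You also rightly note the boundary issue: if $\pi^2/(3A_d)$ were an integer the leading exponents would coincide and the sign of the difference would be governed by subexponential factors, which your sketch does not address; the theorem as stated implicitly assumes this does not occur.
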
 Calculations of $\alpha_d, A_d, M_d$ suggest that for $d \geq 10$, Theorem \ref{thm:KangKimAssympt} implies that $q_{d}^{(1)}(n) - Q_{d-3}^{(1)}(n) \to \infty$  as $n \to \infty$. This is further corroborated by computational data of verifying the inequality for $d + 2 \leq n \leq 100000$.
\begin{table}[]
\begin{center}
\caption{ Values of $d$, $\alpha_d$, $A_d$, $M_d$ as defined by Kang and Kim \cite{kang_asymptotics_2021} over select $d \geq 3$. \label{tab:4}}

\begin{tabular}{l|l|l|l}
 \(d\)                                                                                                     & \(\alpha_d \)  (approximate)                              & \(A_d \) (approximate)  & \(M_d \)                        \\
\hline
3    &  0.682328 & 0.566433 & 5 \\
\hline
  4  & 0.724492  & 0.504981  & 6 \\
\hline
5    &  0.754878 & 0.459731 & 7\\
\hline
6  & 0.778090  & 0.424486  & 7 \\
\hline
7  & 0.796544  & 0.395966  & 8 \\
\hline
8  & 0.811652  & 0.372243  & 8 \\
\hline
9  & 0.824301  & 0.352090  & 9 \\
\hline
10  & 0.835079  & 0.334683  & 9 \\
\hline
11  & 0.844398  & 0.319446  & 10 \\
\hline
12  & 0.852551  & 0.305958  & 10 \\ \hline
20  & 0.893895  & 0.234874  & 14 \\ \hline
100  & 0.966584  & 0.091456  & 35 \\
\hline
\end{tabular}
\end{center}
\end{table}

We now consider the sub-cases of Conjecture \ref{conj:a=3} consisting of $3 \leq d \leq 27$. Unfortunately, Table \ref{tab:4} and computational data suggest that the proposed method fails for those values of $d$ since for $3 \leq  d \leq 27$, we have $\lceil \frac{d}{3}\rceil \leq M_{\lceil \frac{d}{3}\rceil}$. This implies by \cite[Theorem 1.1]{kang_asymptotics_2021}, that
\[
\lim_{n \to \infty} \left( q_{d}^{(1)}(n) - \Qdash{d-3}{1}{n}\right) = -\infty. 
\]
To address this problematic case, one presumably could use the explicit asymptotic expressions in Duncan, et al. \cite{ourpaper} for $q_{d}^{(3)}(n)$ and $Q_{d}^{(3,-)}(n)$ to find an $\Omega(d)$ such that $n > \Omega(d)$,
\[
\Delta_{d}^{(3,-)}(n) \geq 0
\]
to address the values of $d> 3$ such that $\mathrm{gcd}(d,3) = 1$.
One presumably could then employ a finite computation to show $\Delta_{d}^{(3,-)}(n) \geq 0$ for $n \leq \Omega(d)$ for these values of $d$.

Remarkably, Armstrong, et.al \cite{armstrong_alder_type_2022} have extended Theorems \ref{thm:genkp unc} and \ref{thm:genkp con} with a linear lower bound on $d$ by proving a vast generalization of Proposition \ref{prop:mod alders}. In particular, they have shown that Theorem \ref{thm:genkp unc} holds for all $a,n \geq 1$ and $\lceil \frac{d}{a}\rceil \geq 105$. However, their methods do not seem applicable to reduce Conjecture \ref{conj:genkp} to a finite computation. Such a reduction, unfortunately appears out of reach of current combinatorial and analytic methods since Conjecture \ref{conj:aldermconj} fails for $1 \leq d \leq 11$ via Theorem \ref{thm:KangKimAssympt}.

\section*{Acknowledgments}
We are immensely thankful for Holly Swisher in many useful conversations, comments, and deeply encouraging feedback regarding this paper. We also thank Robert J. Lemke Oliver for giving us his code which greatly aided our calculations of $q_{d}^{(1)}(n)$ and $Q_{d}^{(1)}(n)$ for large $n$. We also thank the anonymous reviewer for numerous helpful comments and suggestions which greatly improved the exposition of this paper.


\bibliographystyle{plain}

\bibliography{References.bib}
\nocite{andrews_partition_1991}
\end{document}